 \newcommand{\spann}{\textrm{span}}
   \newcommand{\wHat}{\widehat}
\newcommand{\uu}{u}
\newcommand{\xx}{\textbf{x}}
\newcommand{\vv}{v}
\newcommand{\wt}{\widetilde{t}}
\newcommand{\pt}{\partial}
\newcommand{\sens}{sensitivity }
\newcommand{\sensfct}{sensitivities}
\definecolor{darkWhite}{rgb}{0.94,0.94,0.94}
\newtheorem{theorem}{Theorem}[section]
\newtheorem{corollary}[theorem]{Corollary}
\newtheorem{remark}[theorem]{Remark}
\begin{document}

 \title{\noindent The non-intrusive reduced basis two-grid method applied to sensitivity analysis}

\maketitle
\normalsize
\begin{center}
\author{Elise Grosjean \footnotemark[1],}
\author{Bernd Simeon \footnotemark[1]}
\end{center}

\footnotetext[1]{Department of Mathematics, RPTU Kaiserslautern-Landau, 67657, Deutschland}
\date

\begin{abstract}
  This paper deals with the derivation of Non-Intrusive Reduced Basis (NIRB) techniques for sensitivity analysis, more specifically the direct and adjoint state methods.
  For highly complex parametric problems, these two approaches may become too costly ans {}{thus Reduced Basis Methods (RBMs) may be a viable option.} We propose new NIRB two-grid algorithms for both the direct and adjoint state methods {}{in the context of parabolic equations}. The NIRB two-grid method uses the HF code solely as a ``-box'', requiring no code modification. Like other RBMs, it is based on an offline-online decomposition. The offline stage is time-consuming, but it is only executed once, whereas the online stage {}{employs coarser grids and thus,} is significantly less expensive than a {}{fine} HF evaluation. \\
  On the direct method, we prove on a classical model problem, the heat equation, that HF evaluations of sensitivities reach an optimal convergence rate in $L^{\infty}(0,T;H^1_0(\Omega))$, and then establish that these rates are recovered by the NIRB two-grid approximation. These results are supported by numerical simulations. We then propose a {}{new procedure that} further reduces the computational costs of the online step while only computing a coarse solution of the state equations. {}{On the adjoint state method, we propose a new algorithm that reduces both the state and adjoint solutions.} All numerical results are run with the model problem as well as a more complex problem, namely the Brusselator system.
\end{abstract}


\section{Introduction.}
\label{introduction}

Sensitivity analysis is a critical step in optimizing the parameters of a parametric model.
The goal is to see how sensitive its results are to small changes of its input parameters.

Several methods have been developed for computing sensitivities, see \cite{reviewSA} for an overview. 
We focus here on two differential-based sensitivity analysis approaches, in connection with models given 
as reaction-diffusion equations.
\begin{itemize} 
\item {}{The first method which we considered is called ``the direct method'', and is also known as the "forward method".} It may be used when dealing with discretized solutions of parametric Partial Differential Equations (PDEs). The sensitivities (of the solution or other outputs of interest) are computed directly from the original problem. One drawback is that it necessitates solving a new system for each parameter of interest, i.e., for $P$ parameters of interest, $P+1$ problems have to be solved.
\item {}{The second method we considered is ``the adjoint state method'', also known as the "backward method".} It may be a viable option \cite{sykes1984adjoint} when the direct method becomes prohibitively expensive. In this setting, the goal is to compute the sensitivities of an objective function that one aims at minimizing. The associated Lagrangian is formulated, and by choosing appropriate multipliers, a new system known as "the adjoint" is derived. This approach is preferred in many situations since it avoids calculating the sensitivities with respect to the solutions. For example, in the framework of inverse problems, one can determine the "true" parameter from several {}{measurements} (which are usually provided by multiple sensors) while combining it with a gradient-type optimization algorithm. As a result, one gets the "integrated effects" on the outputs over a time interval. The advantage is that it only requires two systems to solve regardless of the number of parameters of interest. 
\end{itemize}
{}{Thus, the direct method is appealing when there are relatively few parameters, whereas the adjoint state method is preferred when there are many parameters.}

\paragraph*{Earlier works.}
For extremely complex simulations, both methods may still be impractical {}{with a High-Fidelity (HF) classical solver}. Several reduction techniques have thus been investigated in order to reduce the complexity of the sensitivity computation. Among them, Reduced Basis Methods (RBMs) are a well-developed field \cite{maday2006reduced,peterson1989reduced,binev2011convergence}. They use an offline-online decomposition, in which the offline step is time-consuming but is only performed once, and the online step is significantly less expensive than a fine HF evaluation. In the context of sensitivity analysis, the majority of these studies relies on a {}{RB-}Galerkin projection \cite{rb} onto the adjoint state system in the online part. In what follows, we present a brief review of previous works on RBMs combined with both sensitivity methods. 
\begin{itemize}
\item Let us begin with the direct method. It has been employed and studied with Reduced Basis (RB) spaces in various applications, e. g., \cite{noor1993reduced,watson1996sensitivity,ding2021design}.
In \cite{KIRSCH1994143}, {}{to improve the precision of the sensitivity approximations}, a combined method is proposed (based on local and global approximations with series expansion and a RB expression), which was first developed in \cite{kirsch1991reduced}. {}{Conversely, the sensitivities may be used to enhance the RB generation \cite{hay2008use}, as in \cite{hay_borggaard_pelletier_2009}, where the authors examine two strategies that use the sensitivity of the Proper Orthogonal Decomposition (POD) modes with respect to the problem parameters to improve the flows representation in the context of Navier-Stokes equations}.
      
  \item  Concerning the adjoint state formulation, its first applications in conjunction with computational reduction approaches can be found in \cite{ito1998reduced} in the context of RBMs, where several RB sub-spaces are compared, or in \cite{ravindran2000reduced} with the POD method, where the underlying PDE has an affine
parameter dependence. Particular emphasis is being placed on developing accurate a-posteriori error estimates in order to improve the reduced basis generation \cite{quarteroni2007reduced,tonn2011comparison,dede2010reduced,dede2012reduced}. {}{In recent studies of optimal control problems, RBMs have been simultaneously employed on the state, adjoint and control variables \cite{karcher2018certified,bader2016certified}.}
  
  Even if the adjoint state method is frequently preferred, writing its associated reduced problem can be difficult when the adjoint formulation is not straightforward. It may also be reformulated to take advantage of previously developed RB theory.
For example, in \cite{negri2013reduced}, it is rewritten as a saddle-point problem for Stokes-type problems. 

\end{itemize}
We would like to note also that variance-based sensitivity analysis {}{(with Sobol indices)} has been investigated using RBMs \cite{janon2016goal} and in particular Non-Intrusive Reduced Basis (NIRB) methods  \cite{kumar2016efficient}.\\ 
{}{
In this paper, we focus on the NIRB two-grid method, which has been developed to approximate state direct problems in the context of elliptic equations \cite{madaychakir} with the Finite Element Method (FEM) and has been generated to finite volume schemes in \cite{VF}. Recently, it has been extended to parabolic equations \cite{paraboliccontext}. To conclude this brief overview of RBMs applied to sensitivity analysis, note that the present work adapts the NIRB two-grid method to sensitivity analysis thanks to a prediction of the RB coefficients, an idea that has already been tested to some extent with Radial Basis Functions (RBF) or neural networks approaches \cite{hestaven2018non,haasdonk2023new}.} 
\paragraph*{Motivation.}

Even though the {}{RB-Galerkin} method is prevalent in the literature, {}{in general,} its main disadvantage lies in its intrusiveness. {}{Indeed, if the PDE has a non-linear dependency with respect to the parameter \cite{barrault2004empirical} or if the whole matrix (or the right-hand side) of the underlying problem cannot be retrieved, the assembly routines computed from its formulation must be changed in the HF code, leading to an intrusive procedure \cite{rb}. This may be difficult if the HF code is very complex or even impossible if it has been purchased, as is often the case in an industrial context. 
 Non-Intrusive Reduced Basis (NIRB) methods are more practical to implement than other RBMs because they require the execution of the HF code as a "-box" solver only. Several nuances of non-intrusivity coexist, with the goal of making RBMs adaptable to a wider range of softwares. The methods we shall consider are non-intrusive in the sense that they require the knowledge of the spatial and temporal coordinates, solutions of the state equations and of the underlying sensitivity problems only. Note that if the mesh connectivity is not known, the method still works, as shown in the context of elliptic equations in \cite{grosj2022}. It should be noted that some solvers can provide sensitivity solutions via algorithm differentiation (see e.g. SU2 solver \cite{economon2016su2}).}
{}{
Apparently, NIRB methods have not yet been used to approximate sensitivities except for statistical approaches such as variance-based sensitivity analysis \cite{janon2016goal}.
}

{}{
In the present study, we shall focus on the NIRB two-grid method \cite{madaychakir,VF,inproceedings,censity} (see also other NIRB methods \cite{Casenave2014,audouze2013nonintrusive,willcox}, that differ from the two-grid method). Like most RBMs, the NIRB two-grid method relies on the assumption that the manifold of all solutions has a small Kolmogorov width~\cite{kolmo}. Its main advantages are that it can be used for a wide range of PDEs and that it is simple to implement. Furthermore, its non-intrusivity makes it appealing from an engineering standpoint, as explained above.
The effectiveness of this method depends on its offline/online decomposition (as for most RBMs). The offline part, which generates the RB, takes time but is performed once only.  The specific feature of this NIRB approach is to solve the parametric problem {}{with the solver} solely on a coarse mesh during the online step, and then to rapidly improve the precision of the coarse solution by projecting it onto the reduced space. It makes this portion of the algorithm much cheaper than a fine HF evaluation. 
}

\paragraph*{Main results.}

{}{
Our main results may be summarized as follows:
\begin{itemize}    \item \textbf{Direct method}. We have carried out a thorough theoretical analysis of the heat equation as a model problem. In this setting, we derive optimal convergence rates in $L^{\infty}(0,T;H^1_0(\Omega))$ for the fully-discretized sensitivity solution. It turns out that we obtain these optimal rates {}{theoretically and numerically} also for the NIRB sensitivity approximations. Our main theoretical result is given by Theorem \ref{EllipticEst}. Then, drawing inspiration from recent works \cite{grosjean:hal-03588508,GUO2018807}, we efficiently apply a supervised learning process that makes the online stage much quicker. Indeed, only the state equations have to be solved, on the coarse mesh. In particular, the online stage no longer requires to solve as many systems as there are parameters, hence making the direct method more competitive with the adjoint one.
\item \textbf{Backward method}. In this context, we adapt the NIRB two-grid algorithm by applying two reductions (both on the state and on the ajoint solutions) to the underlying problem. Finally, we numerically demonstrate its efficacy on the heat equation and on the Brusselator problem.
\end{itemize}
}


\paragraph*{Outline  of  the  paper.}

The  remainder of this paper  is  organized  as  follows. Section \ref{mathbackground} describes both sensitivity methods along with established convergence results and the NIRB two-grid algorithm for parabolic equations. {}{Section \ref{NIRBproof}} is devoted to the theoretical results on the rate of convergence for the NIRB sensitivity approximation. {}{In Section \ref{NIRBalgoS}, we present the new NIRB algorithms for the direct and adjoint methods.} In the last Section \ref{results}, {}{several numerical results are presented to support the theoretical results.}\\


\section{Mathematical Background.}

\label{mathbackground}
Let $\Omega$ be a bounded domain in $\mathbb{R}^d$ (with $d \in \mathbb{N}^+$), and a {}{sufficiently smooth} boundary $\pt \Omega$, and consider a parametric problem $\mathcal{P}$ on $\Omega$. Let $\mu \in \mathcal{G}$ be its parameter of interest. {}{For each parameter value $\mu \in \mathcal{G}$, we denote $u(\mu) \in V$ the associated solution of $\mathcal{P}$, where $V$ is a suitable Banach space. In what follows, we consider homogeneous Dirichlet boundary conditions, so that $V := H_0^1(\Omega)$, with the associated norm $|\cdot|_{H^1(\Omega)}$.} Note that we will use the notation $(\cdot,\cdot)$ to denote the classical $L^2(\Omega)$-inner product.

In this section, we shall introduce our model problem, that of the heat equation, in a continuous setting, and then its spatial and time discretizations. Then, we detail the sensitivity problems associated to the heat equation and recall the NIRB algorithm in the context of parabolic equations.  \\

In the next sections, $C$ will denote various positive constants independent of the size of the spatial and temporal grids and of the parameter $\mu$, and $C(\mu)$ will denote constants independent of the sizes of the grids but dependent on $\mu$. 
\subsection{A model problem: The heat equation.}
\subsubsection{The continuous problem.}

{}{Our aim is to find accurate sensitivity approximations of the heat equation.} We thus first consider the following heat equation on the domain $\Omega$ with homogeneous Dirichlet {}{boundary} conditions, which takes the form

\begin{numcases}{}
& $u_t-\nabla \cdot (A(\cdot; \mu) \nabla u) =f,\ \textrm{ in }\Omega\times ]0,T]$, \nonumber\\
& $u(\cdot,0;\mu)=u^0(\cdot; \mu), \ \textrm{ in }\Omega$,   \label{heatEq2} \\
& $u(\cdot,t)=0, \ \textrm{ on }\pt \Omega \times ]0,T]$, \notag
\end{numcases} 
where
 \begin{align}
\label{assumptionparabolic}
& {}{   \textrm { the parameter }\mu=(\mu_1,\cdots,\mu_P) \in \mathcal{G} \subset \mathbb{R}^P \textrm{ and } 
     \mathcal{C}^2(\mathcal{G}) \ni A:\Omega \times \mathcal{G} \to {}{\mathbb{R}^{d\times d}} \textrm{ is measurable, bounded, }} \notag \\
     & {}{\textrm{and uniformly elliptic}.  \textrm{ The function f }\in L^2(\Omega \times [0,T] ),\textrm{ while }u^0(\mu) \in H_0^1(\Omega) {}{\cap \mathcal{C}^1(\mathcal{G})}.}
    \end{align}
    
     {}{ We assume that under these assumptions, the state solution is differentiable with respect to $\mu$ \cite{chavent1974identification} (see \cite{cacuci1980sensitivity} for general nonlinear equations).}
    For any $t>0$, the solution {}{is} $u(\cdot,t) \in H_0^1(\Omega)$, and $u_t(\cdot, t) \in L^2(\Omega)$ stands for the derivative of $u$ with respect to time.
   
We use the conventional notations for space-time dependent Sobolev spaces \cite{lions1961problemes}
\begin{align*}
 & L^p(0,T;V):=\Big\{u\ | \ \norm{u}_{L^p(0,T;V)}:=\Big(\int_0^T \norm{u(\cdot,t)}_{V}^p \ dt \Big)^{1/p}< \infty \Big\}, \ 1\leq  p < \infty,\\
  &    L^{\infty}(0,T;V):=\Big\{u\ | \ \norm{u}_{L^{\infty}(0,T;V)}:= \ \underset{0\leq t \leq T }{\mathrm{ess\ sup}} \ \norm{u(\cdot, t)}_V< \infty \Big\},
\end{align*}
We shall employ FEM for the spatial discretization, and thus consider the variational formulation of \eqref{heatEq2}:\\
\begin{numcases}{}
  & Find $u\in L^2(0,T;H_0^1(\Omega))$ with $u_t \in L^2(0,T;H^{-1}(\Omega))$ such that \nonumber \\
 & $ (u_t(\cdot,t),v)+a(u(\cdot,t),v;\mu)=(f(\cdot,t),v), \ \forall v \in H_0^1(\Omega) \textrm{ and } t\in (0,T)$,  \label{varpara} \\
 & $ u(\cdot, 0;\mu)=u^0(\cdot;\mu) , \textrm{ in } \Omega,$ \nonumber
\end{numcases}
where $a$ is given by \begin{equation}
  a(w,v;\mu)=\int_{\Omega}A(\xx;\mu) \nabla w(\xx)\cdot \nabla v (\xx)\ d\xx,\quad \forall w, v\in H_0^1(\Omega).
  \label{ateerm1}
\end{equation}
We remind that \eqref{varpara} is well posed (see \cite{evans10} for the existence and the uniqueness of solutions to problem \eqref{varpara}) and we refer to the notations of \cite{evans10}. 

\subsubsection{The various discretizations.}
To approximate our solution via the NIRB algorithm, we shall consider two spatial grids of $\Omega$:
\begin{itemize}
  \item one fine mesh, denoted $\mathcal{T}_h$,  where its size $h$ is defined
as \begin{equation}
  \label{finemeshsize}
  h = \underset{K\in \mathcal{T}_h}{\textrm{max }} h_K,
\end{equation}
\item and on coarse mesh, denoted $\mathcal{T}_H$, with its size defined as
  \begin{equation}
    \label{coarsemeshsize}
    H = \underset{K\in \mathcal{T}_H}{\textrm{max }} H_K >> h,
    \end{equation}
\end{itemize}
where the diameter $h_K$ (or $H_K$) of any element $K$ in a mesh is equal to $\underset{x,y \in K}{\sup \ } |x-y|$.\\
We employed $\mathbb{P}_1$ finite elements to discretize in space. Thus, we introduce $V_h$ and $V_H$, the continuous piecewise linear finite element functions (on fine and coarse meshes, respectively) that vanish on the boundary $\pt \Omega$.
We consider the so-called Ritz projection operator  $P^1_h: H^1_0(\Omega) \to V_h$ ($P^1_H$ on $V_H$ is defined similarly) which is given by
\begin{equation}
(\nabla P^1_h u, \nabla v)=(\nabla u, \nabla v),\quad  \forall v \in V_h, \ \textrm{for } u \in H^1_0(\Omega).
  \label{P1opParabol}
\end{equation}
In the context of time-dependent problems, a time stepping method of finite difference type is used to get a fully discrete approximation of the solution of \eqref{heatEq2}. 
  {}{  In analogy with the spatial discretizations, we consider two different time grids for the time discretization:}
\begin{itemize}
\item One time grid, denoted $F$, is associated to fine solutions (for the generation of the snapshots). To avoid making notations more cumbersome, we will consider a uniform time step $\Delta t_F$. The time levels can be written $t^n = n \ \Delta t_F$, where $n \in {}{\mathbb{N}^+}$.
\item Another time grid, denoted $G$, is used for coarse solutions. By analogy with the fine grid, we consider a uniform grid with time step $\Delta t_G$. Now, the time levels are written $\widetilde{t}^m = m \ \Delta t_G$, where $m \in \mathbb{N}^+$. 
\end{itemize}

As in the elliptic context \cite{madaychakir}, the NIRB algorithm is designed to recover the optimal estimate in space. To do so, the NIRB theory uses {}{the} Aubin-Nitsche lemma in the context of FEM. Yet, since there is no such argument as the Aubin-Nitsche lemma for time stepping methods, we must consider time discretizations that provide the same precision with larger time steps, as detailed in \cite{paraboliccontext}. Thus, we consider a higher order time scheme for the coarse solution.
As in \cite{paraboliccontext}, we used an Euler scheme (first order approximation) for the fine solution and the Crank-Nicolson scheme (second order approximation) for the coarse solution on our model problem.\\
Thus, we deal with two kinds of notations for the discretized solutions:
\begin{itemize}
\item $u_h(\xx,  t)$ and $u_H(\xx,t)$ (or $u_h(\xx,  t;\mu)$ and $u_H(\xx,t;\mu)$ in order to highlight the $\mu$-dependency) that respectively denote the fine and coarse solutions of the spatially semi-discrete solution, at time $t \geq 0$.
\item $u_h^n(\xx)$ and $u_H^m(\xx)$ (or $u_h^n(\xx;\mu)$ and $u_H^m(\xx;\mu)$) that respectively denote the fine and coarse full-discretized solutions at time $t^n=n \times \Delta t_F$ and $\widetilde{t}^m = m\times \Delta t_G$.
\end{itemize}

\begin{remark}
  To simplify notations, we consider that both time grids end at time $T$ here,
  \begin{equation*}
    T\ =\ N_T\ \Delta t_F \ =\  M_T\ \Delta t_G.
  \end{equation*}     
\end{remark}

The semi-discrete form of the variational problem \eqref{varpara} {}{reads} for the fine mesh (similarly for the coarse mesh):
\begin{numcases}{}
  & Find $u_h(t)=u_h(\cdot,t) \in V_h$ for $t\in [0,T]$ such that \nonumber \\
 & $ (u_{h,t}(t),v_h)+a(u_h(t),v_h;\mu)=(f(t),v_h), \ \forall v_h \in V_h \textrm{ and } t \in ]0,T]$, \label{varpara1disc} \\
 & $ u_h(\cdot, 0;\mu)\ =\ u_h^0(\cdot;\mu) \ =\ P_h^1(u^0)(\cdot;\mu)$. \nonumber
\end{numcases}
From the definition of the Ritz projection operator $P_h^1$ \eqref{P1opParabol}, the initial condition $u_h^0(\mu)$ (and similarly for the coarse mesh) is such that
\begin{equation}
  (\nabla u_h^0(\mu),\nabla v_h)=(\nabla u^0(\mu),\nabla v_h), \forall v_h \in V_h,
\end{equation}
and hence, it corresponds to the finite element approximation of the corresponding elliptic problem whose exact solution is $u^0(\mu)$.\\

The full discrete form of the variational problem \eqref{varpara} for the fine mesh with an implicit Euler scheme {}{can be expressed as}:
\begin{numcases}{}
  &  Find $u_h^n \in V_h$ for $n= 0,\dots,N_T$ such that \nonumber \\
 & $ (\overline{\pt} u_h^n,v_h)+a(u_h^n,v_h;\mu)=(f(t^n),v_h), \ \forall v_h \in V_h \textrm{ and } n = 1,\dots,N_T$,\label{varpara2disc}\\
 & $ u_h(\cdot, 0;\mu)\ =\ u_h^0 (\cdot;\mu)$, \nonumber
\end{numcases}
where the time derivative in the variational form of the problem \eqref{varpara1disc} has been replaced by a backward difference quotient, $\overline{\pt} u^n_h=\frac{u^n_h-u^{n-1}_h}{\Delta t_F}$.\\
For the coarse mesh with the Crank-Nicolson scheme, and with the notation $\overline{\pt} u^m_H=\frac{u^m_H-u^{m-1}_H}{\Delta t_G}$, it becomes:
\begin{numcases}{}
  &  Find $u_H^m \in V_H$ for $m = 0,\dots,M_T$, such that \nonumber \\
 & $ (\overline{\pt} u_H^m,v_H)+a(\frac{u_H^m + u_H^{m-1}}{2},v_H;\mu)=(f(\widetilde{t}^{m-\frac{1}{2}}),v_H), \ \forall v_H \in V_H \textrm{ and } m = 1,\dots M_T$, \nonumber \\
 & $ u_H(\cdot, 0;\mu)\ =\ u_H^0 (\cdot;\mu)$, \label{varpara2discCN}
\end{numcases}
where $\widetilde{t}^{m-\frac{1}{2}}= \frac{\widetilde{t}^{m}+ \widetilde{t}^{m-1}}{2}.$

To approximation the solution $u(\mu)$ with the NIRB two-grid method, as explained in \cite{paraboliccontext}, we will need to interpolate in space (as for elliptic equations) and in time the coarse solution. So let us introduce the quadratic interpolation in time of a coarse solution at time $t^n \in I_m=[\widetilde{t}^{m-1},\widetilde{t}^m]$ defined on $[\widetilde{t}^{m-2},\widetilde{t}^m]$ from the coarse approximations at times $\wt^{m-2}, \wt^{m-1},$ and  $\wt^m$,\ for all $m=2,\dots, M_T$.
To this purpose, we employ the following parabola on $[\wt^{m-2},\wt^m]$: \\
    For $m\geq 2$, $\forall {}{t^n} \in I_m=[\widetilde{t}^{m-1},\widetilde{t}^m]$,
    \begin{align}
      \label{parabola}
     {}{ I^2_n[u_H^m](\mu):=}&{}{u_H^{m-2}(\mu)\frac{(\wt^m-t^n)(t^n-\wt^{m-1})}{(\widetilde{t}^m - \wt^{m-2})(\wt^{m-2}-\wt^{m-1})}  + u_H^{m-1}(\mu)\frac{ (\wt^{m-2}-t^n)(t^n-\wt^{m}) }{(\widetilde{t}^{m-2} - \wt^{m-1})(\wt^{m-1}-\wt^{m})}} \nonumber\\
      &\quad \quad  {}{+ u_H^{m}(\mu)\frac{ (\wt^{m-1}-t^n)(t^n-\wt^{m-2})}{(\widetilde{t}^{m-1} - \wt^{m})(\wt^{m}-\wt^{m-2})}.}
     \end{align}
       For $t^n \in I_1=[\wt^0,\wt^1]$, we use the same parabola defined by the coarse approximations at times $\wt^0, \ \wt^1, \ \wt^2$ as the one used over $[\wt^1,\wt^2]$.
  We denote  by $\widetilde{u_H}^n(\mu)=I^2_n[u_H^m](\mu)$ the {}{piecewise} quadratic interpolation of $u_H^m$ at a time $t^n$.
  Note that we choose this interpolation in order to keep an approximation of order 2 in time (it works also with other quadratic interpolations).

\subsection{Sensitivity analysis: The direct problem.}
In this section, we shall recall the sensitivity systems (continuous and discretized versions) for $P$ parameters of interest. Then, with the direct formulation, we prove several numerical results on the model problem which are used for the proof of the NIRB error estimate. To not make the notations too cumbersome in the proofs, we consider $A(\mu)=\mu \ I_d$, with $P=1$ and $\mu \in \mathbb{R}^+_* ${}{$(=\mathbb{R}_{>0})$} for the analysis and we drop the $\mu$-dependency notations for $u$ and $u_0$. 
\subsubsection{The continuous setting. }
We consider $P$ parameters of interest, denoted $\mu_p=1,\dots,P$. We aim at approximating the exact derivatives, also known as the \textit{sensitivities}
\begin{equation}
  \label{sensitivities}
\boxed{  \Psi_p(t,\xx;\mu):=\frac{\pt u}{\pt \mu_p}(t,\xx;\mu), \quad \textrm{in } [0,T]\times \Omega.}
\end{equation}
To do so via the direct method, we solve $P$ new systems, which can be directly obtained by differentiating the state equations with respect to each $\mu_p$.
The continuous state problem \eqref{varpara} may be rewritten
\begin{numcases}{}
  &  Find $u(t) \in V$ for $t \in [0,T]$ such that \nonumber \\
  & $ (u_t(t),v)=F(u(t),v;\mu):=-a(u(t),v;\mu)+(f(t),v), \ \forall v \in V,\   t > 0$, \notag \\
 & $ u(\cdot, 0;\mu)\ =\ u^0(\cdot;\mu) $, \notag \label{varparasensheat}
\end{numcases}
where the bilinear form $a$ is defined by \eqref{ateerm1}.
Using the chain rule and since the time and the parameter derivatives commute,
\begin{equation*}
  (\Psi_{p,t}(t),v)= \frac{\pt F}{\pt u}(u(t),v;\mu) \cdot \Psi_p (t) + \frac{\pt F}{\pt \mu}(u(t),v).
\end{equation*}
 We therefore obtain the following problem
\begin{numcases}{}
  \label{eq:2chainrule}
  & Find $\Psi_p(t) \in V$ for $t \in [0,T]$ such that \notag \\
&$( \Psi_{p,t}(t),v)+a\big(\Psi_p(t),v;\mu\big)= - \big( \frac{\pt A}{\pt \mu_p}(\mu)\nabla u(t),\nabla v\big), \textrm{ for } v \in V, \textrm{ for } t >0,$ \notag \\
& $\Psi^0_p(\mu) = \frac{\pt u^0}{\pt \mu_p}(\xx;\mu),$
\end{numcases}
which is well-posed since $u \in L^2(0,T;\ H_0^1(\Omega))$, and under the assumptions \eqref{assumptionparabolic}, the so-called "parabolic regularity estimate" implies that  $u \in L^2(0,T;\ H^2(\Omega)) \cap L^{\infty}(0,T;\ H_0^1(\Omega))$ \cite{evans10,thomee2}.
\subsubsection{The spatially semi-discretized version.}
To derive the NIRB approximation, as previously for the state solution, we discretize the \sens  problems \eqref{eq:2chainrule} in space and in time.\\
The corresponding spatially semi-discretized formulation on $\mathcal{T}_h$ (similarly on $\mathcal{T}_H$) reads
\begin{numcases}{}
  \label{semidiscretizedpsi}
  &  Find $\Psi_{p,h}(t) \in V_h$ for $t \in [0,\dots, T]$ such that \nonumber \\
  &$(\Psi_{p,h,t}(t),v_h)+a\big(\Psi_{p,h}(t),v_h;\mu\big)= - \big( \frac{\pt A}{\pt \mu_p} (\mu) \nabla u_{h}(t;\mu), \nabla v_h\big),\textrm{ for } v_h \in V_h,  \textrm{ for } t \in ]0,T]$, \notag \\
& $\Psi_{p,h}^0(\cdot;\mu) = P^1_h(\Psi^0_p)(\cdot;\mu),$
\end{numcases}
where $P_h^1$ is the Ritz projection operator given by \eqref{P1opParabol}.

\subsubsection{The fully-discretized versions.}
From \eqref{semidiscretizedpsi}, we can derive the fully-discretized systems for the fine and coarse grids.\\
The direct sensitivity problems with respect to the parameter $\mu_p$ on the fine mesh $\mathcal{T}_h$ with an Euler scheme read
\begin{numcases}{}
  \label{eq:directheatsensitivity}
  &  Find $\Psi_{p,h}^n \in V_h$ for $n \in \{0,\dots, N_T\}$ such that \nonumber \\
  &$(\overline{\pt} \Psi_{p,h}^n,v_h)+a\big(\Psi_{p,h}^n,v_h;\mu\big)= - \big(\frac{\pt A}{\pt \mu_p}(\mu)\nabla u_{h}^n(\mu), \nabla v_h\big) , \textrm{ for } v_h \in V_h, \textrm{ for } n=\{1,\dots,N_T\}$, \\
& $\Psi_{p,h}^0(\cdot;\mu) = P^1_h \Psi_p^0(\cdot;\mu),$ \notag
\end{numcases}
where, as before, the time derivative in the variational form of the problem \eqref{varparasensheat} has been replaced by a backward difference quotient, $\overline{\pt} \Psi^n_h=\frac{\Psi^n_h-\Psi^{n-1}_h}{\Delta t_F}$.\\
Before proceeding with the proof of Theorem \eqref{EllipticEst}, we need several theoretical results that can be deduced from \cite{thomee2}, but require some precisions. Indeed, first, in \cite{thomee2}, the estimates are proven on the heat equation with a non-varying diffusion coefficient. Secondly, the right-hand side function $f$ vanishes when seeking the error estimates, whereas in our case, the right-hand side function depends on $u$ and necessitates {}{more precise} estimates.\\

With the fully-discretized version \eqref{eq:directheatsensitivity}, the following estimate holds.
\begin{theorem}
  \label{corollaryPsii}
  Let $\Omega$ be a convex polyhedron. Let $A(\mu)=\mu \ I_d$, with $\mu \in \mathbb{R}_*^+$. \\
  Consider $u \in H^{1}(0,T;\ H^2(\Omega)) \cap H^{2}(0,T;\ L^2(\Omega))$ {}{to} be the solution of \eqref{heatEq2} with $u^0 \in H^2(\Omega)$ and $u_h^n$ be the fully-discretized variational form \eqref{varpara2disc}. Let $\Psi$ and $\Psi_h^n$ be the corresponding \sensfct, respectively given by \eqref{eq:2chainrule} and \eqref{eq:directheatsensitivity}. Then
  \small{
\begin{align*}
  & \forall n = 0,\dots,N_T,\  \norm{\Psi_h^n- \Psi(t)}_{L^2(\Omega)} \leq Ch^2\norm{\Psi^0}_{H^2(\Omega)} + h^2\big(C \int_0^{t^n} \norm{\Psi_t}_{H^2(\Omega)} \textrm{ ds } + C(\mu) \big[\int_0^{t^n} \norm{ u_t}_{H^2(\Omega)}^2 \textrm{ ds} \big]^{1/2}\big) \\
    & \quad \quad \quad  \quad \quad \quad  \quad \quad \quad \quad \quad \quad \quad \quad \quad \quad \quad + \Delta t_F \big( C \int_0^{t^n} \norm{ \Psi_{tt}}_{L^2(\Omega)} \textrm{ ds}  + C(\mu) \big[\int_0^{t^n} \norm{u_{tt}}_{L^2(\Omega)}^2 \textrm{ ds} \big]^{1/2} \big).
\end{align*}
}
\end{theorem}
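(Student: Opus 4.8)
The plan is to adapt the classical parabolic finite element error analysis (cf.\ \cite{thomee2}, Ch.~1) with two modifications forced by our setting: keeping track of the scalar diffusion coefficient $\mu$, and controlling the perturbation due to the fact that the discrete right-hand side of \eqref{eq:directheatsensitivity} is assembled from $u_h^n$ rather than from $u(t^n)$. I first split the error by the Ritz projection \eqref{P1opParabol},
\[
\Psi_h^n-\Psi(t^n)=\rho^n+\theta^n,\qquad \rho^n:=P^1_h\Psi(t^n)-\Psi(t^n),\quad \theta^n:=\Psi_h^n-P^1_h\Psi(t^n)\in V_h .
\]
Since $\Omega$ is a convex polyhedron, the $L^2$ estimate for the Ritz projection gives $\norm{\rho^n}_{L^2}\le Ch^2\norm{\Psi(t^n)}_{H^2}\le Ch^2\big(\norm{\Psi^0}_{H^2}+\int_0^{t^n}\norm{\Psi_t}_{H^2}\,ds\big)$, which already accounts for the $h^2$-terms not involving $u$. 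For $\theta^n$, I subtract from \eqref{eq:directheatsensitivity} the equation obtained for $P^1_h\Psi(t^n)$ (from \eqref{eq:2chainrule} at $t=t^n$ and the definition of $P^1_h$). The key observation is that, because $A(\mu)=\mu I_d$, the right-hand side of \eqref{eq:directheatsensitivity} equals $-\mu^{-1}a(u_h^n,v_h;\mu)$ and that of \eqref{eq:2chainrule} equals $-\mu^{-1}a(u(t^n),v_h;\mu)$; eliminating these two bilinear forms by means of \eqref{varpara2disc} and \eqref{varpara} shows that their difference is exactly $\mu^{-1}(\overline{\partial}u_h^n-u_t(t^n),v_h)$. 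Hence
\[
(\overline{\partial}\theta^n,v_h)+a(\theta^n,v_h;\mu)=-(\rho_t(t^n),v_h)-(\omega^n,v_h)+\tfrac1\mu\big(\overline{\partial}u_h^n-u_t(t^n),v_h\big),\qquad v_h\in V_h,
\]
with $\rho_t=(P^1_h\Psi-\Psi)_t$ and $\omega^n=\overline{\partial}P^1_h\Psi(t^n)-P^1_h\Psi_t(t^n)$ the Euler consistency error for the Ritz-projected sensitivity; every term on the right is an $L^2(\Omega)$-pairing against $v_h$, which is what makes the usual energy argument go through.

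Taking $v_h=\theta^n$, using $(\overline{\partial}\theta^n,\theta^n)\ge\tfrac12\overline{\partial}\norm{\theta^n}_{L^2}^2$ and $a(\theta^n,\theta^n;\mu)\ge 0$, summing over time levels and applying the elementary discrete Gr\"onwall argument (with $\theta^0=P^1_h\Psi^0-P^1_h\Psi^0=0$) gives
\[
\max_{0\le k\le n}\norm{\theta^k}_{L^2}\le C\,\Delta t_F\sum_{k=1}^{n}\Big(\norm{\rho_t(t^k)}_{L^2}+\norm{\omega^k}_{L^2}+\tfrac1\mu\norm{\overline{\partial}u_h^k-u_t(t^k)}_{L^2}\Big).
\]
The first sum is $\le Ch^2\Delta t_F\sum_k\norm{\Psi_t(t^k)}_{H^2}\le Ch^2\int_0^{t^n}\norm{\Psi_t}_{H^2}\,ds$, by the $L^2$ Ritz estimate applied to $\Psi_t(t^k)$; the second, using the $L^2$-stability of $P^1_h$ and Taylor's formula with integral remainder, is $\le C\Delta t_F\int_0^{t^n}\norm{\Psi_{tt}}_{L^2}\,ds$. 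Together with $\rho^n$ these reproduce the $\Psi^0$-, $\Psi_t$- and $\Psi_{tt}$-contributions of the theorem.

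The remaining sum is the crux. I write $\overline{\partial}u_h^k-u_t(t^k)=\sigma_u^k-\overline{\partial}e^k$, where $e^k:=u(t^k)-u_h^k$ and $\sigma_u^k:=\overline{\partial}u(t^k)-u_t(t^k)$, so that $\Delta t_F\sum_k\norm{\sigma_u^k}_{L^2}\le\Delta t_F\int_0^{t^n}\norm{u_{tt}}_{L^2}\,ds$. Splitting the state error itself, $e^k=\rho_u^k+\theta_u^k$ with $\rho_u^k=u(t^k)-P^1_h u(t^k)$ and $\theta_u^k=P^1_h u(t^k)-u_h^k\in V_h$, one has $\Delta t_F\sum_k\norm{\overline{\partial}\rho_u^k}_{L^2}=\sum_k\norm{(I-P^1_h)\int_{t^{k-1}}^{t^k}u_t\,ds}_{L^2}\le Ch^2\int_0^{t^n}\norm{u_t}_{H^2}\,ds$. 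For $\theta_u$, the same manipulation as for $\theta^k$ above gives $(\overline{\partial}\theta_u^k,v_h)+a(\theta_u^k,v_h;\mu)=(g^k,v_h)$ with $\theta_u^0=0$ and $\norm{g^k}_{L^2}\le C\norm{\sigma_u^k}_{L^2}+Ch^2\norm{u_t(t^k)}_{H^2}$; testing this with $v_h=\overline{\partial}\theta_u^k$ — the discrete counterpart of testing the semidiscrete error equation with $\theta_{u,t}$ — and summing yields $\Delta t_F\sum_{k=1}^n\norm{\overline{\partial}\theta_u^k}_{L^2}^2\le\Delta t_F\sum_{k=1}^n\norm{g^k}_{L^2}^2$, so that by the Cauchy--Schwarz inequality in $k$,
\[
\Delta t_F\sum_{k=1}^n\norm{\overline{\partial}\theta_u^k}_{L^2}\le C\sqrt{t^n}\,\Big(h^2\big[\int_0^{t^n}\norm{u_t}_{H^2}^2\,ds\big]^{1/2}+\Delta t_F\big[\int_0^{t^n}\norm{u_{tt}}_{L^2}^2\,ds\big]^{1/2}\Big).
\]
Collecting all the contributions, bounding the $L^1$-in-time norms of $u_t$ and $u_{tt}$ by their $L^2$-in-time norms with Cauchy--Schwarz, and absorbing $1/\mu$, $\sqrt{t^n}\le\sqrt T$ and the interpolation constants into $C(\mu)$, I obtain the claimed estimate.

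I expect the main obstacle to be exactly the treatment of $\overline{\partial}u_h^k-u_t(t^k)$, i.e.\ of $\overline{\partial}e^k$: a crude Cauchy--Schwarz on the gradient term $(\nabla(u(t^k)-u_h^k),\nabla v_h)$ would lose a factor $h$ and yield only order $h$. Recovering the full order $h^2$ while using only the assumed regularity $u\in H^1(0,T;H^2(\Omega))\cap H^2(0,T;L^2(\Omega))$ is precisely what requires the identity turning that term into an $L^2$-pairing, the $\rho_u$--$\theta_u$ decomposition of the state error, and the discrete energy identity $\Delta t_F\sum_k\norm{\overline{\partial}\theta_u^k}_{L^2}^2\le\Delta t_F\sum_k\norm{g^k}_{L^2}^2$ (the fully discrete analogue of $\int_0^t\norm{\theta_{u,t}}_{L^2}^2\,ds\le\int_0^t\norm{\rho_{u,t}}_{L^2}^2\,ds$). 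One should also verify that $\Psi_t,\Psi_{tt},u_t,u_{tt}$ are regular enough in time for the pointwise evaluations and Riemann-sum comparisons used above, which follows from parabolic regularity for \eqref{eq:2chainrule} under the hypotheses on $u$.
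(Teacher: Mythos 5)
Your proposal is correct and follows essentially the same route as the paper's proof: the $\rho^n/\theta^n$ Ritz-projection split, the key identity converting the gradient mismatch $(\nabla(u(t^n)-u_h^n),\nabla v_h)$ into $\tfrac{1}{\mu}(\overline{\pt}u_h^n-u_t(t^n),v_h)$ via the state equations, and the treatment of the state error through the $\rho_u/\theta_u$ decomposition, the test function $\overline{\pt}\theta_u^k$, telescoping, and Cauchy--Schwarz in time. The only differences are cosmetic (your $\rho_t$, $\omega^n$ splitting of the consistency error versus the paper's $w_1^n,w_2^n$, and pointwise values $\norm{u_t(t^k)}_{H^2(\Omega)}$ where the paper uses interval averages to avoid pointwise-in-time evaluation), and you correctly flag these regularity details yourself.
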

{}{The proof of Theorem \ref{corollaryPsii} is detailed in the appendix \ref{proofEuler}.}\\
With the fully-discretized version \eqref{eq:directheatsensitivity}, the following estimate holds with the $H^1_{{}{0}}$ norm.
\begin{theorem}
  \label{corollaryPsiiH1disc}
  Let $\Omega$ be a convex polyhedron. Let $A(\mu)=\mu \ I_d$, with $\mu \in \mathbb{R}_*^+$.\\ Consider $u \in H^{1}(0,T;\ H^2(\Omega)) \cap H^{2}(0,T;\ L^2(\Omega))$ {}{to} be the solution of \eqref{heatEq2} with $u^0 \in H^2(\Omega)$ and $u_h^n$ be the fully-discretized variational form \eqref{varpara2disc}. Let $\Psi$ and $\Psi_h^n$ be the corresponding \sensfct, respectively given by \eqref{eq:2chainrule} and \eqref{eq:directheatsensitivity}. Then
\begin{align*}
 & \forall n=0,\dots, N_T,\ \norm{\nabla \Psi_h^n- \nabla \Psi(t)}_{L^2(\Omega)} \leq   h\big[C\norm{\Psi^0}_{H^2(\Omega)} + C(\mu) \int_0^{t^n} \norm{ \Psi_t}_{H^2(\Omega)} \textrm{ ds } + C(\mu) \big[\int_0^{t^n} \norm{u_t}_{H^2(\Omega)}^2 \textrm{ ds}\big]^{1/2}\big] \\
    &\quad \quad \quad  \quad \quad \quad  \quad \quad \quad \quad \quad \quad \quad \quad \quad \quad \quad \quad + C(\mu) \Delta t_F \big( \int_0^{t^n} \norm{ \Psi_{tt}}_{L^2(\Omega)} \textrm{ ds}  +  \big[\int_0^{t^n} \norm{ u_{tt}}_{L^2(\Omega)}^2 \textrm{ ds}\big]^{1/2} \big)\big].
\end{align*}
\end{theorem}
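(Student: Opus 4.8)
The plan is to mirror the structure of Theorem \ref{corollaryPsii} (whose $L^2$-estimate is assumed proven in the appendix) but to extract the $H^1_0$-estimate, which gains one power of $h$ less in space but keeps the same temporal order $\Delta t_F$. The starting point is the standard decomposition of the fully-discrete sensitivity error into an interpolation/projection part and a discrete part,
\begin{equation*}
  \Psi_h^n - \Psi(t^n) = \big(\Psi_h^n - P_h^1 \Psi(t^n)\big) + \big(P_h^1 \Psi(t^n) - \Psi(t^n)\big) =: \theta^n + \rho^n,
\end{equation*}
so that $\nabla \rho^n$ is controlled directly by the Ritz-projection error estimate, $\norm{\nabla \rho^n}_{L^2(\Omega)} \le C h \norm{\Psi(t^n)}_{H^2(\Omega)}$, and then $\norm{\Psi(t^n)}_{H^2} \le \norm{\Psi^0}_{H^2} + \int_0^{t^n}\norm{\Psi_t}_{H^2}\,\mathrm{ds}$ by the fundamental theorem of calculus. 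The real work is the bound on $\theta^n \in V_h$, which satisfies a discrete parabolic equation of the form $(\overline{\pt}\theta^n, v_h) + a(\theta^n, v_h;\mu) = (g^n, v_h) + a(\text{something}, v_h)$ where the data $g^n$ collects three contributions: the time-truncation error of the backward-Euler quotient $\overline{\pt} P_h^1\Psi(t^n) - \Psi_t(t^n)$, the Ritz-projection defect $\overline{\pt}\rho^n$ combined with the $a(\rho^n,v_h)=0$ identity, and — this is the term that is absent in the classical theory of \cite{thomee2} — the perturbation coming from the right-hand side $-(\frac{\pt A}{\pt\mu}(\mu)(\nabla u_h^n - \nabla u(t^n)), \nabla v_h)$, i.e. the sensitivity source evaluated at the discrete state rather than the exact one.

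Next I would run the standard energy argument for $\theta^n$ in the $H^1_0$ norm: test the discrete equation with $v_h = \overline{\pt}\theta^n$ (rather than $v_h = \theta^n$, which would only give the $L^2$ bound), use coercivity/the algebraic identity $a(\theta^n,\overline{\pt}\theta^n;\mu) \ge \frac{1}{2\Delta t_F}\big(a(\theta^n,\theta^n;\mu) - a(\theta^{n-1},\theta^{n-1};\mu)\big)$ valid for the $\mu$-weighted symmetric form, bound the right-hand side by Cauchy–Schwarz and Young, and sum over time steps. This yields $\norm{\nabla\theta^n}_{L^2(\Omega)}$ controlled by $\norm{\nabla\theta^0}_{L^2}$ plus $\sum_k \Delta t_F \norm{g^k}_{L^2(\Omega)}$ type quantities; the initial term $\theta^0 = 0$ because $\Psi_{p,h}^0 = P_h^1\Psi_p^0$ by construction. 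The time-truncation piece contributes $\Delta t_F \int_0^{t^n}\norm{\Psi_{tt}}_{L^2}\,\mathrm{ds}$, and the $\overline{\pt}\rho$ piece contributes $C h \int_0^{t^n}\norm{\Psi_t}_{H^2}\,\mathrm{ds}$ after converting the difference quotient of $\rho$ into $\int \rho_t$ and using the Ritz error bound on $\rho_t = P_h^1\Psi_t - \Psi_t$ — note this is where one power of $h$ is lost relative to the $L^2$ estimate, since here we measure $\nabla\theta$ rather than $\theta$, and there is no Aubin–Nitsche duality gain.

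The main obstacle, as in Theorem \ref{corollaryPsii}, is the control of the extra source term $-(\frac{\pt A}{\pt\mu}(\mu)\nabla(u_h^n - u(t^n)), \nabla v_h)$. Here I would invoke the already-established fully-discrete state estimate in the $H^1_0$ norm (the analogue of Theorem \ref{corollaryPsiiH1disc} for $u$ itself, which in the classical \cite{thomee2} theory gives $\norm{\nabla(u_h^n - u(t^n))}_{L^2} \le C h \norm{u^0}_{H^2} + Ch\int_0^{t^n}\norm{u_t}_{H^2}\,\mathrm{ds} + C\Delta t_F\int_0^{t^n}\norm{u_{tt}}_{L^2}\,\mathrm{ds}$, but here one only needs, and should use, the sharper form with the $L^2$-in-time norm $[\int_0^{t^n}\norm{u_t}_{H^2}^2\,\mathrm{ds}]^{1/2}$ that appears in the statement). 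Since $\frac{\pt A}{\pt\mu}(\mu) = I_d$ in the analysis setting, the coefficient bound only produces a $\mu$-independent constant, but in general $\norm{\frac{\pt A}{\pt\mu}}_{L^\infty}$ is absorbed into $C(\mu)$, which explains why every state-dependent term in the conclusion carries $C(\mu)$ rather than $C$. The delicate bookkeeping is making sure the factor of $h$ (resp. $\Delta t_F$) multiplying the state error survives the summation without an extra loss: the state-error terms enter the $\theta$ equation as data $g^k$ with $\norm{g^k}_{L^2} \le C(\mu)\norm{\nabla(u_h^k - u(t^k))}_{L^2}$, are multiplied by $\Delta t_F$ and summed, and since the state error is already $O(h) + O(\Delta t_F)$ uniformly in $k$ (or better, with the integral-in-time form), the sum $\sum_k \Delta t_F \cdot O(h)$ telescopes to $t^n \cdot O(h)$, absorbed into the stated integrals. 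Once these three data contributions are assembled and the triangle inequality $\norm{\nabla(\Psi_h^n - \Psi(t^n))}_{L^2} \le \norm{\nabla\theta^n}_{L^2} + \norm{\nabla\rho^n}_{L^2}$ is applied, the claimed bound follows. I would present the energy estimate and the handling of the state-source term in detail and relegate the purely routine Ritz-projection and time-truncation estimates to citations of \cite{thomee2}.
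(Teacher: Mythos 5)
Your skeleton coincides with the paper's proof up to the crucial step: the same splitting $\Psi_h^n-\Psi(t^n)=\theta^n+\rho^n$, the Ritz bound for $\nabla\rho^n$, the test function $v_h=\overline{\pt}\theta^n$, $\theta^0=0$, and the standard treatment of the truncation terms $(P_h^1-I)\overline{\pt}\Psi(t^n)$ and $\overline{\pt}\Psi(t^n)-\Psi_t(t^n)$ are all as in the paper.

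The gap is in your handling of the state-dependent source. In the error equation for $\theta^n$ this term is $-\big(\nabla(u_h^n-u(t^n)),\nabla v_h\big)$, and you propose to absorb it as an $L^2$ datum $g^n$ with $\norm{g^n}_{L^2(\Omega)}\le C(\mu)\norm{\nabla(u_h^n-u(t^n))}_{L^2(\Omega)}$ and then invoke a fully-discrete $H^1$ estimate for the state. That recasting is not valid: the functional $v_h\mapsto(\nabla w,\nabla v_h)$ on $V_h$ is not represented by an $L^2$ function of norm $\lesssim \norm{\nabla w}_{L^2(\Omega)}$ (its representer is a discrete Laplacian, whose $L^2$ norm can be of size $h^{-1}\norm{\nabla w}_{L^2(\Omega)}$), and if you instead keep the gradient form and pair it with your test function $\overline{\pt}\theta^n$ by Cauchy--Schwarz, you generate $\norm{\nabla\overline{\pt}\theta^n}_{L^2(\Omega)}$, which the left-hand side does not control (it only yields $\norm{\overline{\pt}\theta^n}_{L^2(\Omega)}^2$ plus the telescoping of $\mu\norm{\nabla\theta^n}_{L^2(\Omega)}^2$); a summation by parts in time would in turn require bounds on $\nabla\overline{\pt}(u_h^n-u(t^n))$ that are not available. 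The paper closes this step differently: subtracting the continuous and fully-discrete state equations \eqref{varpara} and \eqref{varpara2disc} converts the gradient pairing exactly into $\frac{1}{\mu}\big(\overline{\pt}u_h^n-u_t(t^n),v_h\big)$, a genuine $L^2$ datum (see \eqref{secondexpressionH1}), and the quantity $\Delta t_F\sum_j\norm{\overline{\pt}u_h^j-u_t(t^j)}_{L^2(\Omega)}^2$ is then bounded by $O(h^4+\Delta t_F^2)$ through the appendix estimates \eqref{newdecompositiondisc}, \eqref{w1L2}, \eqref{w2L2} (an energy argument on the state tested with $\overline{\pt}\theta_u^j$). This is precisely what produces the terms $C(\mu)\,h\,\big[\int_0^{t^n}\norm{u_t}_{H^2(\Omega)}^2\,\mathrm{ds}\big]^{1/2}$ and $C(\mu)\,\Delta t_F\,\big[\int_0^{t^n}\norm{u_{tt}}_{L^2(\Omega)}^2\,\mathrm{ds}\big]^{1/2}$ in the statement; without this (or an equivalent) device your argument does not close, and an inverse-estimate repair would lose a power of $h$.
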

\begin{proof}
{}{In the proofs, we denote $A \lesssim B$ for $A \leq CB$ with $C$ not depending on $h$ nor $H$ and independent of the time steps. }
 As in \cite{thomee2}, we first decompose the error with two components $\theta^n$ and $\rho^n$  such that, on the discretized time grid $(t^n)_{n=0,\dots,N_T}$,
\begin{align}
\forall n=0,\dots,N_T, \ e^n:= \Psi_h^n- \Psi(t^n)&=(\Psi_h^n- P^1_h \Psi(t^n))+(P^1_h \Psi(t^n)- \Psi(t^n)), \nonumber \\
&:=\theta^n + \rho^n.
\label{rhonthetan}
\end{align}
\begin{itemize}
\item The estimate on $\rho^n$ is classical (see Lemma 1.1 \cite{thomee2}) and leads to
\begin{equation}
  \norm{\nabla \rho^n}_{L^2(\Omega)} \lesssim h \norm{\Psi(t^n)}_{H^2(\Omega)} \leq h \big[\norm{\Psi^0}_{H^2(\Omega)} + \int_0^{t^n} \norm{ \Psi_t}_{H^2(\Omega)} \ \textrm{ds} \big], \  \forall n=0,\dots,N_T.
\label{rhoclassiqueH1bis}
\end{equation}
\item For the estimate of $\theta^n$, {}{instead of choosing $v=\theta^n$ as in the proof of Theorem \ref{corollaryPsii} in Equation \eqref{secondexpression}, we take $v=\overline{\pt} \theta^n$ and} we have
  \begin{equation}
  \label{secondexpressionH1}
  \norm{\overline{\pt} \theta^n}^2_{L^2(\Omega)}+\mu (\nabla \theta^n,\nabla \overline{\pt} \theta^n)=\frac{1}{\mu}(\overline{\pt} u_h^n- u_t(t^n),\overline{\pt} \theta^n) - (w^n, \overline{\pt}\theta^n),
  \end{equation}
{}{with }
\begin{equation*}
{}{w^n:=w_1^n+w_2^n, \textrm{with }w_1^n:=(P_h^1-I)\overline{\pt} \Psi(t^n), \textrm{\quad   } w_2^n:=\overline{\pt} \Psi(t^n) - \Psi_t(t^n),}
  \end{equation*}

  which entails
    \begin{equation*}
 \mu (\nabla \theta^n,\nabla \overline{\pt} \theta^n) \leq \frac{1}{2\mu^2}\norm{\overline{\pt} u_h^n-  u_t(t^n)}_{L^2(\Omega)}^2  + \frac{1}{2}\norm{w^n}_{L^2(\Omega)}^2.
    \end{equation*}
By definition of $\overline{\pt}$
    \begin{equation*}
         \mu \norm{\nabla \theta^n}_{L^2(\Omega)}^2 \leq (\sqrt{\mu}\nabla \theta^n,\sqrt{\mu}\nabla \theta^{n-1}) + \frac{\Delta t_F}{2\mu^2}\norm{\overline{\pt} u_h^n- u_t(t^n)}_{L^2(\Omega)}^2  + \frac{\Delta t_F}{2}\norm{w^n}_{L^2(\Omega)}^2,
    \end{equation*}
       and by Young's inequality
            \begin{equation*}
         \mu \norm{\nabla \theta^n}_{L^2(\Omega)}^2 \leq  \mu\norm{\nabla \theta^{n-1}}_{L^2(\Omega)}^2 + \frac{\Delta t_F}{\mu^2}\norm{\overline{\pt} u_h^n-  u_t(t^n)}_{L^2(\Omega)}^2  + \Delta t_F \norm{w^n}_{L^2(\Omega)}^2,
    \end{equation*}
   which, by recursion, gives
        \begin{equation}
        \label{gradtheta}
 \mu \norm{\nabla \theta^n}_{L^2(\Omega)}^2 \leq \underbrace{ \frac{\Delta t_F}{\mu^2} \underset{j=1}{\overset{n}{\sum}} \norm{\overline{\pt} u_h^j- u_t(t^j)}_{L^2(\Omega)}^2 }_{{}{T_{1,n}'}}  + \underbrace{\Delta t_F  \underset{j=1}{\overset{n}{\sum}} \norm{w^j}_{L^2(\Omega)}^2}_{{}{T_{2,n}'}}.
    \end{equation}
    \begin{itemize} 
        \item {}{As for the previous estimate (of Theorem \ref{corollaryPsii}), detailed in Appendix \ref{proofEuler}, a bound of $T_{1,n}'$ is derived from \eqref{newdecompositiondisc}, \eqref{w1L2} and \eqref{w2L2}  (here, we consider an additional factor $\frac{1}{\mu}$), which yields }
\begin{equation*}
 T_{1,n}' \lesssim \frac{1}{\mu^2} \big[ h^4 \int_0^{t^n} \norm{u_t}_{H^2(\Omega)}^2 \textrm{ ds } + \Delta t_F^2 \int_0^{t^n} \norm{u_{tt}}_{L^2(\Omega)}^2 \textrm{ ds } \big].
\end{equation*}
\item To find a bound for $T_{2,n}'$, we simply use \eqref{w1L2} and \eqref{w2L2} again but with the sensitivity function $\Psi$ instead of the state function $u$, {}{and thus, we obtain
\begin{equation*}
 T_{2,n}' \lesssim  h^4 \int_0^{t^n} \norm{u_t}_{H^2(\Omega)}^2 \textrm{ ds } + \Delta t_F^2 \int_0^{t^n} \norm{u_{tt}}_{L^2(\Omega)}^2 \textrm{ ds } .
\end{equation*}
}
  \end{itemize}
  Combining the estimates on \eqref{gradtheta} (with the estimates on $T_{1,n}'$ and $T_{2,n}'$), and \eqref{rhoclassiqueH1bis} concludes the proof.
    \end{itemize}
\end{proof}
{}{\begin{remark}
  With the assumptions of Theorem \ref{corollaryPsiiH1disc}, we remark that the estimate, that we have derived, is optimal. Indeed, we adapted an optimal result from \cite{thomee2} where the rates of convergence are in $\mathcal{O}(h+\Delta t_F)$. Compared to the estimate of \cite{thomee2}, we now have an additional term in \eqref{gradtheta}, that we have named $T_{1,n}'$ and which takes into account the additional dependence of the source terms in \eqref{eq:directheatsensitivity} with the state solution. We showed in Theorem \ref{corollaryPsiiH1disc} that it does not degrade these rates, up to some constants that are independent of spatial and temporal grid steps. 
\end{remark}}
With $\overline{\pt} \Psi^m_H=\frac{\Psi^m_H-\Psi^{m-1}_H}{\Delta t_G}$, on the coarse mesh $\mathcal{T}_H$ with the Crank-Nicolson scheme, the fully-discretized system \eqref{varpara2discCN} yields
\begin{numcases}{}
  \label{eq:directheatsensitivityCN}
  &  Find $\Psi_{p,H}^m \in V_H$ for $m \in \{0,\dots, M_T\}$ such that \nonumber \\
&$(\overline{\pt} \Psi_{p,H}^m,v_H)+a\big(\frac{\Psi_{p,H}^m+\Psi_{p,H}^{m-1}}{2},v_H;\mu\big)= - \big(\frac{\pt A}{\pt \mu_p}(\mu) \frac{\nabla u_{H}^m(\mu) + \nabla u_{H}^{m-1}(\mu)}{2} , \nabla v_H\big),  \textrm{ for } v_H \in V_H, \textrm{ for } m=\{1,\dots,M_T\}$,  \\
& $\Psi_{p,H}^0(\cdot;\mu) = P^1_H \Psi^0_p (\cdot;\mu). \notag $
\end{numcases}
We have the following result in the $L^2$ norm with the Crank-Nicolson scheme on the coarse mesh $\mathcal{T}_H$.
\begin{theorem}
  \label{corollaryPsiiCrankNicolson}
  Let $\Omega$ be a convex polyhedron. Let $A(\mu)=\mu \ I_d$, with $\mu \in \mathbb{R}_*^+$. \\
  Consider $u \in H^{2}(0,T;\ H^2(\Omega)) \cap  H^{3}(0,T;\ L^2(\Omega)) $ {}{to} be the solution of \eqref{heatEq2} with $u^0 \in H^2(\Omega)$ and $u_H^m$ be the fully-discretized variational form \eqref{varpara2discCN} (on the coarse mesh $\mathcal{T}_H$). Let $\Psi$ and $\Psi_H^m$ be the corresponding \sensfct, respectively given by \eqref{eq:2chainrule} and \eqref{eq:directheatsensitivity}. Then
  \small{
\begin{align*}
  & \forall m=0,\dots,M_T, \norm{\Psi_{{}{H}}^m- \Psi(\widetilde{t}^m)}_{L^2(\Omega)} \leq CH^2 \big[\norm{\Psi^0}_{H^2(\Omega)} + \int_0^{\widetilde{t}^m} \norm{\Psi_t}_{H^2(\Omega)} \textrm{ ds } + C(\mu) \big[\int_0^{\widetilde{t}^m} \norm{u_t}_{H^2(\Omega)}^2 \textrm{ ds } \big]^{1/2}\big] \\
    &\quad \quad \quad \quad \quad \quad \quad \quad \quad + C \Delta t_G^2 \big( \int_0^{\widetilde{t}^m}\norm{ \Psi_{ttt}}_{L^2(\Omega)} \textrm{ ds}+ \big[ \int_0^{\widetilde{t}^m} \norm{\Delta u_{tt}}_{L^2(\Omega)}^2 \textrm{ ds}\big]^{1/2} + C(\mu) \big[ \big[\int_0^{\widetilde{t}^m} \norm{u_{ttt}}_{L^2(\Omega)}^2 \textrm{ ds}]^{1/2} + \int_0^{\widetilde{t}^m} \norm{ \Delta \Psi_{tt}}_{L^2(\Omega)} \textrm{ ds}\big] \big).
\end{align*}
}
\end{theorem}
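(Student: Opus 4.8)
The plan is to follow the two-component splitting used in the proofs of Theorem~\ref{corollaryPsii} and Theorem~\ref{corollaryPsiiH1disc}, now adapted to the Crank-Nicolson discretisation \eqref{eq:directheatsensitivityCN} and to the strengthened regularity $u \in H^{2}(0,T;H^2(\Omega)) \cap H^{3}(0,T;L^2(\Omega))$. As in \eqref{rhonthetan}, on the coarse time grid write for each $m$
\begin{equation*}
e^m := \Psi_H^m - \Psi(\wt^m) = \big(\Psi_H^m - P_H^1 \Psi(\wt^m)\big) + \big(P_H^1 \Psi(\wt^m) - \Psi(\wt^m)\big) =: \theta^m + \rho^m .
\end{equation*}
As in \eqref{rhoclassiqueH1bis}, the Ritz estimate (Lemma~1.1 of \cite{thomee2}) gives $\norm{\rho^m}_{L^2(\Omega)} \lesssim H^2 \norm{\Psi(\wt^m)}_{H^2(\Omega)} \leq H^2\big[\norm{\Psi^0}_{H^2(\Omega)} + \int_0^{\wt^m}\norm{\Psi_t}_{H^2(\Omega)}\,\textrm{ds}\big]$, which yields part of the $H^2$ block of the statement; the work lies in bounding $\theta^m$.

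For the error equation I would subtract the Crank-Nicolson identity satisfied by $\tfrac12\big(P_H^1\Psi(\wt^m)+P_H^1\Psi(\wt^{m-1})\big)$ from \eqref{eq:directheatsensitivityCN} and evaluate the continuous sensitivity problem \eqref{eq:2chainrule} at the midpoint $\wt^{m-\frac12}$. Since $A(\mu)=\mu I_d$, one has $a(P_H^1 w,v_H;\mu)=a(w,v_H;\mu)$, so the stiffness contribution of $\rho$ drops and, for all $v_H\in V_H$,
\begin{equation*}
(\overline{\pt}\theta^m,v_H) + a\Big(\tfrac{\theta^m+\theta^{m-1}}{2},v_H;\mu\Big) = -(w^m,v_H) - a(\sigma^m,v_H;\mu) - \Big(\tfrac{\pt A}{\pt\mu}(\mu)\big[\tfrac{\nabla u_H^m+\nabla u_H^{m-1}}{2}-\nabla u(\wt^{m-\frac12})\big],\nabla v_H\Big),
\end{equation*}
where $w^m=w_1^m+w_2^m$ with $w_1^m:=(P_H^1-I)\overline{\pt}\Psi(\wt^m)$, $w_2^m:=\overline{\pt}\Psi(\wt^m)-\Psi_t(\wt^{m-\frac12})$, and $\sigma^m:=\tfrac12\big(\Psi(\wt^m)+\Psi(\wt^{m-1})\big)-\Psi(\wt^{m-\frac12})$. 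Here $w_1^m$ is a Ritz error on a difference quotient, hence $\mathcal{O}(H^2)$ in $\norm{\Psi_t}_{H^2(\Omega)}$; $w_2^m$ is the Crank-Nicolson truncation of $\Psi_t$, hence $\mathcal{O}(\Delta t_G^2)$ in $\norm{\Psi_{ttt}}_{L^2(\Omega)}$; and $a(\sigma^m,\cdot;\mu)=\mu(\nabla\sigma^m,\nabla\cdot)$ is integrated by parts into $-\mu(\Delta\sigma^m,\cdot)$, a $\mathcal{O}(\Delta t_G^2)$ term in $\mu\norm{\Delta\Psi_{tt}}_{L^2(\Omega)}$, that is, with a constant $C(\mu)$. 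For the last, state-coupling term I would imitate the treatment of $T_{1,n}'$ in the proof of Theorem~\ref{corollaryPsiiH1disc}: split $\tfrac12(\nabla u_H^m+\nabla u_H^{m-1})-\nabla u(\wt^{m-\frac12})$ into the fully-discrete state error $\tfrac12\big(\nabla(u_H^m-u(\wt^m))+\nabla(u_H^{m-1}-u(\wt^{m-1}))\big)$ plus the Crank-Nicolson interpolation remainder $\tfrac12\big(\nabla u(\wt^m)+\nabla u(\wt^{m-1})\big)-\nabla u(\wt^{m-\frac12})$; this remainder, integrated by parts and Taylor expanded, gives a $\mathcal{O}(\Delta t_G^2)$ term in $\norm{\Delta u_{tt}}_{L^2(\Omega)}$ (with no $\mu$, since $\tfrac{\pt A}{\pt\mu}(\mu)=I_d$), while the fully-discrete state error is controlled by the Crank-Nicolson counterpart of Theorem~\ref{corollaryPsii}, whose right-hand side contributes the $H^2\int_0^{\wt^m}\norm{u_t}_{H^2(\Omega)}\,\textrm{ds}$ piece and the $\Delta t_G^2\big(\int_0^{\wt^m}\norm{u_{ttt}}_{L^2(\Omega)}\,\textrm{ds}+\int_0^{\wt^m}\norm{\Delta u_{tt}}_{L^2(\Omega)}\,\textrm{ds}\big)$ piece, up to the $\mu$-dependent constants $C(\mu)$ that appear once the stiffness term is eliminated through the state equation.

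The energy step is the Crank-Nicolson analogue of taking $v=\theta^n$ in the Euler proof: test the error equation with $v_H=\tfrac12(\theta^m+\theta^{m-1})$. Then $(\overline{\pt}\theta^m,\tfrac12(\theta^m+\theta^{m-1}))=\tfrac{1}{2\Delta t_G}\big(\norm{\theta^m}_{L^2(\Omega)}^2-\norm{\theta^{m-1}}_{L^2(\Omega)}^2\big)$ and $a\big(\tfrac12(\theta^m+\theta^{m-1}),\tfrac12(\theta^m+\theta^{m-1});\mu\big)\geq 0$, so after dividing by $\tfrac12\big(\norm{\theta^m}_{L^2(\Omega)}+\norm{\theta^{m-1}}_{L^2(\Omega)}\big)$, summing over $m$ telescopically and using $\theta^0=\Psi_H^0-P_H^1\Psi^0=0$, one gets $\norm{\theta^m}_{L^2(\Omega)}\lesssim \Delta t_G\sum_{j=1}^m\big(\norm{w^j}_{L^2(\Omega)}+\mu\norm{\Delta\sigma^j}_{L^2(\Omega)}+(\text{state error terms})\big)$. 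Turning each Riemann sum $\Delta t_G\sum_j(\cdot)$ into the corresponding integral over $[0,\wt^m]$ (using a Cauchy-Schwarz in the time index for the $L^2(0,\wt^m;\cdot)$-type terms) and adding the bound on $\rho^m$ produces exactly the estimate of the statement.

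I expect the main obstacle to be the handling of the state-coupling term together with the Laplacians forced by the integrations by parts. Because the Crank-Nicolson scheme is centred at $\wt^{m-\frac12}$ while the stiffness and source terms use the nodal average, every consistency remainder has to be moved off the merely $H^1$ test function $v_H$ by Green's formula, which is what makes $\norm{\Delta\Psi_{tt}}_{L^2(\Omega)}$ and $\norm{\Delta u_{tt}}_{L^2(\Omega)}$ enter and dictates the regularity hypothesis $u\in H^{2}(0,T;H^2(\Omega))\cap H^{3}(0,T;L^2(\Omega))$; and, as in the Euler analysis, the source of \eqref{eq:directheatsensitivityCN} couples the sensitivity error to the state error, so one must carefully import the fully-discrete Crank-Nicolson state estimate (the second-order, $\Delta t_G^2$ counterpart of the $\Delta t_F$ estimate of Theorem~\ref{corollaryPsii}) while tracking the $\mu$-dependence of the constants. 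Once these two points are organised, the remaining Taylor expansions about $\wt^{m-\frac12}$ are routine, being the $\mathcal{O}(\Delta t_G^2)$ versions of those already carried out for the implicit Euler scheme.
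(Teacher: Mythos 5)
Your overall architecture coincides with the paper's proof: the splitting $e^m=\theta^m+\rho^m$ with the Ritz bound for $\rho^m$, the consistency terms $w_1^m$, $w_2^m$ and $\sigma^m$ (the paper's $w_I^m[\Psi]$, $w_{II}^m[\Psi]$, $w_{III}^m[\Psi]$ after Green's formula), and the telescoping energy step with $v_H=\wHat{\theta^m}$ are all exactly what the paper does. The genuine gap is your treatment of the state-coupling term $\big(\tfrac{\nabla u_H^m+\nabla u_H^{m-1}}{2}-\nabla u(\wt^{m-\frac12}),\nabla v_H\big)$. You split off the nodal discrete state error $\tfrac12\nabla\big(u_H^m-u(\wt^m)\big)+\tfrac12\nabla\big(u_H^{m-1}-u(\wt^{m-1})\big)$ and propose to control it by the Crank--Nicolson counterpart of the $L^2(\Omega)$ estimate of Theorem~\ref{corollaryPsii}. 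But this piece is a gradient--gradient pairing against $\nabla v_H=\nabla\wHat{\theta^m}$: an $L^2$ bound on $u_H^m-u(\wt^m)$ cannot be used there, because Green's formula is not available for the piecewise-linear $u_H^m$ (elementwise integration by parts leaves jump terms) and an inverse inequality costs a factor $H^{-1}$. The only direct alternative, Cauchy--Schwarz followed by absorption of $\norm{\nabla\wHat{\theta^m}}_{L^2(\Omega)}$ into the stiffness term, would require a gradient bound on the state error of order $H^2+\Delta t_G^2$, whereas the full error only satisfies $\norm{\nabla(u_H^m-u(\wt^m))}_{L^2(\Omega)}=\mathcal{O}(H+\Delta t_G^2)$; as written, your argument therefore yields only an $\mathcal{O}(H)$ bound for $\norm{\theta^m}_{L^2(\Omega)}$ and misses the claimed second-order spatial rate.

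The paper avoids any a priori state estimate at this point: since the coarse scheme \eqref{varpara2discCN} and the continuous equation at $\wt^{m-\frac12}$ share the same right-hand side $(f(\wt^{m-\frac12}),v_H)$, subtracting them gives $\mu\big(\nabla u(\wt^{m-\frac12})-\nabla\wHat{u_H^m},\nabla v_H\big)=\big(\overline{\pt}u_H^m-u_t(\wt^{m-\frac12}),v_H\big)$, so the coupling becomes the $L^2$ pairing $\tfrac1\mu\big(\overline{\pt}u_H^m-u_t(\wt^{m-\frac12}),\wHat{\theta^m}\big)$; the sum $\Delta t_G\sum_j\norm{\overline{\pt}u_H^j-u_t(\wt^{j-\frac12})}_{L^2(\Omega)}$ is then estimated by splitting into $\overline{\pt}\theta_u^j$ plus consistency terms and bounding $\sum_j\Delta t_G\norm{\overline{\pt}\theta_u^j}_{L^2(\Omega)}^2$ through the energy identity obtained with the test function $\overline{\pt}\theta_u^m$ (inequality \eqref{pourbound}), which produces precisely the squared time integrals $\big[\int\norm{u_t}_{H^2(\Omega)}^2\big]^{1/2}$, $\big[\int\norm{u_{ttt}}_{L^2(\Omega)}^2\big]^{1/2}$, $\big[\int\norm{\Delta u_{tt}}_{L^2(\Omega)}^2\big]^{1/2}$ of the statement (coming from a Cauchy--Schwarz in the time index, not the $L^1$-in-time integrals you announce). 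If you prefer to keep your splitting, it can be repaired: write $u_H^m-u(\wt^m)=\theta_u^m+\rho_u^m$, note that the $\rho_u$ contribution vanishes against $\nabla v_H$ by the Ritz orthogonality \eqref{P1opParabol}, and prove the superconvergent gradient bound $\norm{\nabla\theta_u^m}_{L^2(\Omega)}\leq C(\mu)(H^2+\Delta t_G^2)$ — which is essentially what \eqref{pourbound} delivers — but that is a different and stronger ingredient than the $L^2$ state estimate you invoke.
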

\begin{proof}
In analogy with \eqref{rhonthetan}, the same definitions of $\rho^m$ and $\theta^m$ is used but with the coarse grids. 
  \begin{itemize}
    \item For $\rho^m$ we get 
    \begin{equation}
  \norm{\rho^m}_{L^2(\Omega)}\lesssim H^2 \big[\norm{\Psi^0}_{H^2(\Omega)} + \int_0^{\widetilde{t}^m} \norm{ \Psi_t}_{H^2(\Omega)} \ \textrm{ds} \big], \  \forall m \in \{0,\dots,M_T\}.
\label{rhoclassiquediscCN}
\end{equation}

  \item Then, we introduce the following notation
  \begin{equation}
    \label{newnotation}
    \widehat{u_H^m}:=\frac{1}{2}(u_H^m+u_H^{m-1}).
  \end{equation}
  Thanks to the Crank-Nicolson formulation on $\Psi_H^m$ \eqref{eq:directheatsensitivityCN} and $u_H^m$ \eqref{varpara2discCN} on the coarse mesh $\mathcal{T}_H$, 
   \begin{align*}
  (\overline{\pt} \theta^m,v)+\mu (\nabla \wHat{\theta^m},\nabla v) &=(\overline{\pt} \Psi_H^m,v)-(\overline{\pt} P_H^1(\Psi(\widetilde{t}^m)),v) + \mu (\nabla \wHat{\Psi_H^m},\nabla v) - \frac{\mu}{2}\big((\nabla P_H^1 \Psi(\widetilde{t}^m),\nabla v) + (\nabla P_H^1 \Psi(\widetilde{t}^{m-1}),\nabla v) \big),\\
  & = -(\nabla \wHat{u_H^m} ,\nabla v)- (\overline{\pt} P_H^1(\Psi(\widetilde{t}^m)),v) - \frac{\mu}{2}\big((\nabla \Psi(\widetilde{t}^m),\nabla v) + (\nabla  \Psi(\widetilde{t}^{m-1}),\nabla v) \big), \\
  &=-(\nabla \wHat{u_H^m} ,\nabla v) - w_I^m[\Psi] -w_{II}^m[\Psi]  - ( \Psi_t(\widetilde{t}^{m-\frac{1}{2}}),v) - \frac{\mu}{2}\big((\nabla  \Psi(\widetilde{t}^m),\nabla v) + (\nabla \Psi(\widetilde{t}^{m-1}),\nabla v) \big), \\
  &= (-\nabla \wHat{u_H^m} + \nabla u(\widetilde{t}^{m-\frac{1}{2}}),\nabla v) -w_I^m -w_{II}^m +  \mu(\nabla \Psi(\widetilde{t}^{m-\frac{1}{2}}),\nabla v)  -  \frac{\mu}{2}\big(\nabla  \Psi(\widetilde{t}^m)+\nabla \Psi(\widetilde{t}^{m-1}),\nabla v \big),\\
  &=(\nabla u(\widetilde{t}^{m-\frac{1}{2}}) - \wHat{\nabla u_H^m},\nabla v) -(\underbrace{w_I^m[\Psi] +w_{II}^m[\Psi] + \mu w_{III}^m[\Psi]}_{w_T^m[\Psi]},v),
\end{align*}
where $w_I^m$, $w_{II}^m$ and $w_{III}^m$ are defined by
\begin{equation}
  \label{w1w2w3}
  w_I^m[\Psi]:=(P_H^1-I)\overline{\pt} \Psi(\widetilde{t}^m), \textrm{ } w_{II}^m[\Psi]:=\overline{\pt} \Psi(\widetilde{t}^m) -  \Psi_t(\widetilde{t}^{m-\frac{1}{2}}) ,  \textrm{ and } w_{III}^m[\Psi]:=\Delta \Psi(\widetilde{t}^{m-\frac{1}{2}}) - \frac{1}{2}(\Delta \Psi (\widetilde{t}^m)+\Delta \Psi(\widetilde{t}^{m-1})).
\end{equation}
 Thus, Equation \eqref{secondexpression}, with a Crank-Nicolson scheme and with $v=\wHat{\theta^m}$, becomes
  \begin{equation}
  \label{secondexpressionbis}
  (\overline{\pt} \theta^m,\wHat{\theta^m})+\mu (\nabla \wHat{\theta^m},\nabla \wHat{\theta^m}) 
  =\frac{1}{\mu}(\overline{\pt} u_H^m-  u_t(\widetilde{t}^{m-\frac{1}{2}}), \wHat{\theta^m}) - (w_T^m[\Psi], \wHat{\theta^m}),
\end{equation}
where $w_T^m[\Psi]=w_I^m[\Psi]+w_{II}^m[\Psi]+\mu w_{III}^m[\Psi]$.
By definition of $\overline{\pt}$ (with the coarse time grid), and since the second term in \eqref{secondexpressionbis} is always non-negative, we get
 \begin{equation*}
  ( \theta^m,\wHat{\theta^m}) - ( \theta^{m-1},\wHat{\theta^m}) \leq \Delta t_G \big[\frac{1}{\mu}\norm{\overline{\pt} u_H^m-  u_t(\widetilde{t}^{m-\frac{1}{2}})}_{L^2(\Omega)} + \norm{w_T^m[\Psi]}_{L^2(\Omega)}\big] \norm{\wHat{\theta^m}}_{L^2(\Omega)},
 \end{equation*}
 and by definition of $\wHat{\theta^m}$ \eqref{newnotation},
  \begin{equation*}
  \norm{ \theta^m }_{L^2(\Omega)}^2 -  \norm{\theta^{m-1}}_{L^2(\Omega)}^2 \leq \Delta t_G \big[\frac{1}{\mu}\norm{\overline{\pt} u_h^m-  u_t(\widetilde{t}^{m-\frac{1}{2}})}_{L^2(\Omega)} + \norm{w_T^m[\Psi]}_{L^2(\Omega)}\big] \norm{\theta^m + \theta^{m-1}}_{L^2(\Omega)},
  \end{equation*}
  so that, after cancellation of a common factor,
    \begin{equation*}
  \norm{ \theta^m }_{L^2(\Omega)} -  \norm{\theta^{m-1}}_{L^2(\Omega)} \leq \Delta t_G \big[\frac{1}{\mu}\norm{\overline{\pt} u_H^m-  u_t(\widetilde{t}^{m-\frac{1}{2}})}_{L^2(\Omega)} + \norm{w_T^m[\Psi]}_{L^2(\Omega)}\big],
    \end{equation*}
    and by recursive application, it entails
       \begin{equation}
       \label{T2T3''}
  \norm{ \theta^m }_{L^2(\Omega)} \leq \underbrace{ \frac{\Delta t_G}{\mu} \overset{m}{\underset{j=1}{\sum}}\norm{\overline{\pt} u_H^j-  u_t(\widetilde{t}^{j-\frac{1}{2}})}_{L^2(\Omega)}}_{{}{T_{1,n}''}} +\underbrace{\Delta t_G \overset{m}{\underset{j=1}{\sum}} \norm{w_T^j[\Psi]}_{L^2(\Omega)}}_{{}{T_{2,n}''}}.
    \end{equation}
\begin{itemize}     
\item To estimate $T_{1,n}''$, we use the same tricks as {}{in the appendix, Equation \eqref{decompositiondisc}}. 
       First, we decompose $T_{1,n}''$ in two contributions, such that 
       \begin{equation*}
            \frac{\Delta t_G}{\mu} \underset{j=1}{\overset{m}{\sum}} \norm{\overline{\pt} u_H^j -  u_t(\widetilde{t}^{j-\frac{1}{2}})}_{L^2(\Omega)} \leq   \frac{\Delta t_G}{\mu} \underset{j=1}{\overset{m}{\sum}} \underbrace{\norm{\overline{\pt} u_H^j - \overline{\pt} P_1^h u(\widetilde{t}^j)}_{L^2(\Omega)}}_{\norm{\overline{\pt} \theta_u^j}_{L^2(\Omega)}} +\underbrace{ \norm{\overline{\pt} P_1^h u(\widetilde{t}^j) -  u_t(\widetilde{t}^{j-\frac{1}{2}})}_{L^2(\Omega)}}_{\norm{w_{I}^j[u] + w_{II}^j[u]}_{L^2(\Omega)}},
       \end{equation*}
     {}{  where $\theta_u^m$ is the coarse version of \eqref{errorParabolU}.}
    From the Cauchy-Schwarz inequality
       \begin{align} 
  \label{decompositiondiscCK}
  \frac{\Delta t_G}{\mu} \underset{j=1}{\overset{m}{\sum}} \norm{\overline{\pt} u_H^j -  u_t(\widetilde{t}^{j-\frac{1}{2}})}_{L^2(\Omega)}  &\leq \frac{(\widetilde{t}^m-\widetilde{t}^0)^{1/2}}{\mu} \big[\big( \underset{j=1}{\overset{m}{\sum}} \Delta t_G \norm{\overline{\pt} \theta^j_u}_{L^2(\Omega)}^2 \big)^{1/2}+ \big( \underset{j=1}{\overset{m}{\sum}} \Delta t_G \norm{w^j_{I}[u]+w^j_{II}[u]}_{L^2(\Omega)}^2  \big)^{1/2}\big], \nonumber \\
  &\lesssim \frac{\Delta t_G^{1/2}}{\mu} \big[\big( \underset{j=1}{\overset{m}{\sum}} \norm{\overline{\pt} \theta^j_u}_{L^2(\Omega)}^2 \big)^{1/2} \nonumber \\
  & \quad \quad \quad \quad \quad   + \big( \underset{j=1}{\overset{m}{\sum}} \norm{w^j_{I}[u]+w^j_{II}[u]}_{L^2(\Omega)}^2 + \norm{w^j_{III}[u]}_{L^2(\Omega)}^2 \big)^{1/2}\big]. 
       \end{align}
  \begin{itemize}
      \item To estimate the first term of \eqref{decompositiondiscCK}, we choose $v= \overline{\pt}\theta^m_u,$ and from the Crank-Nicolson scheme \eqref{varpara2discCN},
  \begin{equation*}
    \norm{\overline{\pt} \theta_u^m}_{L^2(\Omega)}^2 + \mu (\nabla \wHat{\theta_u^m},\nabla \overline{\pt} \theta_u^m) = -(w_{T}^m[u], \overline{\pt} \theta^m_u).
  \end{equation*}
 By definitions of $\overline{\pt}$ and $\wHat{\theta_u^m}$, it can be rewritten
    \begin{equation*}
      \norm{\overline{\pt} \theta_u^m}_{L^2(\Omega)}^2 + \frac{\mu}{2\Delta t_G} \norm{\nabla \theta_u^m}_{L^2(\Omega)}^2 -  \frac{\mu}{2 \Delta t_F } \norm{\nabla \theta_u^{m-1}}_{L^2(\Omega)}^2= -(w_{T}^m[u], \overline{\pt} \theta^m_u),
    \end{equation*}
  and with Young's inequality, we get
    \begin{equation}
      \label{pourbound}
      \norm{\overline{\pt} \theta_u^m}_{L^2(\Omega)}^2 + \frac{\mu}{\Delta t_G} \norm{\nabla \theta_u^{m}}_{L^2(\Omega)}^2 - \frac{\mu}{\Delta t_G} \norm{\nabla \theta_u^{m-1}}_{L^2(\Omega)}^2 \leq \norm{ w_{T}^m[u]}_{L^2(\Omega)}^2.
        \end{equation}
        Now by summing over all time steps to obtain a telescoping sum, we find that
        \begin{equation*}
  \underset{j=1}{\overset{m}{\sum}}  \norm{\overline{\pt} \theta^j_u}_{L^2(\Omega)}^2 \leq \frac{\mu}{\Delta t_G} \norm{\nabla \theta_u^m}_{L^2(\Omega)}^2 +  \underset{j=1}{\overset{m}{\sum}}\norm{ w_{T}^j[u]}_{L^2(\Omega)}^2,
        \end{equation*}
     and by repeated application using \eqref{pourbound} to bound the first right-hand side term,
      we find that
                \begin{equation*}
  \underset{j=1}{\overset{m}{\sum}}  \norm{\overline{\pt} \theta^j_u}_{L^2(\Omega)}^2 \leq 2 \underset{j=1}{\overset{m}{\sum}} \norm{w_{I}^j[u] + w_{II}^j[u] + \mu w_{III}^j[u]}_{L^2(\Omega)}^2 \leq 4 \underset{j=1}{\overset{m}{\sum}} \norm{w_{I}^j[u] + w_{II}^j[u] }_{L^2(\Omega)}^2 +\norm{ \mu w_{III}^j[u]}_{L^2(\Omega)}^2  ,
        \end{equation*}
        therefore we obtain for \eqref{decompositiondiscCK}
        \begin{equation} 
          \label{decompositiondiscCK2}
  \frac{\Delta t_G}{\mu} \underset{j=1}{\overset{m}{\sum}} \norm{\overline{\pt} u_H^j -  u_t(\widetilde{t}^{j-\frac{1}{2}})}_{L^2(\Omega)}  \lesssim \Delta t_G^{1/2}\big[\big(  \underset{j=1}{\overset{m}{\sum}} \frac{1}{\mu^2} \norm{w^j_{I}[u]+w^j_{II}[u]}_{L^2(\Omega)}^2+ \norm{w^j_{III}[u]}_{L^2(\Omega)}^2\big)^{1/2}\big].
       \end{equation}
       \item       Now, we can estimate the right-hand side terms of \eqref{decompositiondiscCK2} as done in \cite{thomee2}.
       We first remark that $w_{I}^j[u]=w_{1,u}^j$ (and the estimate is given by \eqref{w1L2} but with the coarse spatial and time grids),
       so it remains to seek bounds for $w_{II}^j[u]$ and $w_{III}^j[u]$.
       \begin{itemize}
       \item For $w_{II}^j[u]$,
       \begin{align}
         \label{wIIu}
         \Delta t_G \underset{j=1}{\overset{m}{\sum}}  \norm{w_{II}^j[u]}_{L^2(\Omega)}^2 &= \frac{1}{\Delta t_G}\underset{j=1}{\overset{m}{\sum}}  \norm{u(\widetilde{t}^j)-u(\widetilde{t}^{j-1})-\Delta t_G \  u_t(\widetilde{t}^{j-\frac{1}{2}})}_{L^2(\Omega)}^2 \notag, \\
         &=  \frac{1}{4 \Delta t_G } \underset{j=1}{\overset{m}{\sum}}  \norm{\int_{\widetilde{t}^{j-1}}^{\widetilde{t}^{j-\frac{1}{2}}} (s-\widetilde{t}^{j-1})^2  u_{ttt}(s) + \int_{\widetilde{t}^{j-\frac{1}{2}}}^{\widetilde{t}^{j}} (s-\widetilde{t}^{j})^2 u_{ttt}(s) \textrm{ ds}}_{L^2(\Omega)}^2\notag \\
         &\lesssim  \Delta t_G^3 \underset{j=1}{\overset{m}{\sum}}  \norm{  \int_{\widetilde{t}^{j-1}}^{\widetilde{t}^j} u_{ttt}(s) \textrm{ ds} }_{L^2(\Omega)}^2,\nonumber \\
         & \leq  \Delta t_G^4 \underset{j=1}{\overset{m}{\sum}}  \int_{\widetilde{t}^{j-1}}^{\widetilde{t}^j} \norm{u_{ttt} }^2_{ L^2(\Omega)} \textrm{ ds} \leq  \Delta t_G^4 \int_{\widetilde{t}^0}^{\widetilde{t}^m} \norm{u_{ttt}}^2_{ L^2(\Omega)} \textrm{ ds}.
       \end{align}
       
       \item For $w_{III}^j[u]$,
       \begin{align}
         \label{wIIIu}
         \Delta t_G \underset{j=1}{\overset{m}{\sum}}  \norm{w_{III}^j[u]}_{L^2(\Omega)}^2 &=\Delta t_G \underset{j=1}{\overset{m}{\sum}}  \norm{\Delta u(\widetilde{t}^{j-\frac{1}{2}})-\frac{1}{2}(\Delta u(\widetilde{t}^{j}) +\Delta u(\widetilde{t}^{j-1}))}_{L^2(\Omega)}^2, \nonumber \\
         &=\frac{\Delta t_G }{4}\underset{j=1}{\overset{m}{\sum}}  \norm{\int_{\widetilde{t}^{j-1}}^{\widetilde{t}^{j-\frac{1}{2}}} (t^{j-1}-s)\Delta u_{tt}(s) \textrm{ ds } + \int_{\widetilde{t}^{j-\frac{1}{2}}}^{\widetilde{t}^{j}} (s-\widetilde{t}^j)\Delta u_{tt}(s) \textrm{ ds } }_{L^2(\Omega)}^2, \nonumber \\
         &\lesssim \Delta t_G ^3\underset{j=1}{\overset{m}{\sum}} \norm{ \int_{\widetilde{t}^{j-1}}^{\widetilde{t}^j} \Delta u_{tt}  \textrm{ ds} }_{ L^2(\Omega)}^2 \leq \Delta t_G^4 \int_{\widetilde{t}^0}^{\widetilde{t}^{m}} \norm{\Delta u_{tt}  }_{ L^2(\Omega)}^2 \textrm{ ds}.
       \end{align}
       \end{itemize}
       Altogether,
       \begin{equation}
       \label{T2n''}
        T_{1,n}'' \lesssim \big[\frac{H^2}{\mu} \big(\int_0^{\widetilde{t}^m}  \norm{u_t}_{H^2(\Omega)}^2 \textrm{ ds}\big)^{1/2}  + \Delta t_G^2 \big(\big( \frac{1}{\mu}\int_0^{\widetilde{t}^m} \norm{u_{ttt}}_{ L^2(\Omega)}^2 \big)^{1/2}   +  \big( \int_0^{\widetilde{t}^m} \norm{\Delta u_{tt} }_{ L^2(\Omega)}^2 \big)^{1/2} \textrm{ ds} \big)\big].
       \end{equation}
         \end{itemize}
         \item   To estimate $T_{2,n}''$ defined in \eqref{T2T3''}, we remark that $w_I^j=w_{1}^j$ (but with the coarse spatial and time grids),
       and 
       \begin{itemize}
       \item for $w_{II}^j$,
       \begin{align}
         \label{wII}
         \Delta t_G \norm{w_{II}^j}_{L^2(\Omega)} &\leq \norm{\Psi(\widetilde{t}^j)-\Psi(\widetilde{t}^{j-1})-\Delta t_G \Psi_t(\widetilde{t}^{j-\frac{1}{2}})}_{L^2(\Omega)} \notag, \\
         &=  \frac{1}{2} \norm{\int_{\widetilde{t}^{j-1}}^{\widetilde{t}^{j-\frac{1}{2}}} (s-\widetilde{t}^{j-1})^2 \Psi_{ttt}(s) + \int_{\widetilde{t}^{j-\frac{1}{2}}}^{\widetilde{t}^{j}} (s-\widetilde{t}^{j})^2 \Psi_{ttt} (s) \textrm{ ds}}_{L^2(\Omega)}\notag \\
         &\lesssim \Delta t_G^2 \int_{\widetilde{t}^{j-1}}^{\widetilde{t}^j} \norm{\Psi_{ttt}}_{ L^2(\Omega)} \textrm{ ds}.
       \end{align}
       \item Finally, for $w_{III}^j$,
       \begin{equation}
         \label{wIII}
         \Delta t_G \norm{w_{III}^j}_{L^2(\Omega)} =\Delta t_G \norm{\Psi(\widetilde{t}^{j-\frac{1}{2}})-\frac{1}{2}(\Psi(\widetilde{t}^{j}) + \Psi(\widetilde{t}^{j-1}))}_{L^2(\Omega)} \lesssim \Delta t_G^2 \int_{\widetilde{t}^{j-1}}^{\widetilde{t}^j} \norm{\Delta \Psi_{tt} }_{ L^2(\Omega)} \textrm{ ds}.
       \end{equation}
       \end{itemize}
       Altogether, 
       \begin{equation}
       \label{T3n''}
         T_{2,n}'' \lesssim H^2 \int_0^{\widetilde{t}^m} \norm{\Psi_t}_{H^2(\Omega)} \textrm{ ds}  + \Delta t_G^2 \int_0^{\widetilde{t}^m} \big(\norm{\Psi_{ttt} }_{ L^2(\Omega)} + \mu \norm{\Delta \Psi_{tt}}_{ L^2(\Omega)} \big) \textrm{ ds},
       \end{equation}
       \end{itemize}
        which concludes the proof (combining \eqref{T2T3''}, \eqref{T2n''} and \eqref{T3n''}).
\end{itemize}
\end{proof}

In analogy with the previous work on parabolic equations \cite{paraboliccontext}, we define \begin{equation}
\label{parabola2}
  \widetilde{\Psi_H}^n=I^2_n[\Psi_H^m](\mu), \quad \textrm{for }n=0,\dots,N_T, 
    \end{equation}
with $I^2_n$ defined by \eqref{parabola} as the {}{piecewise} quadratic interpolation in time of the coarse solution at the fine times.
\begin{corollary}[of Theorem \ref{corollaryPsiiCrankNicolson}]
  \label{corollaryCorollaryPsiiCrankNicolson}
  Under the assumptions of Theorem \ref{corollaryPsiiCrankNicolson}, let $u_H^m$ be the fully-discretized solution \eqref{varpara2discCN} on the coarse mesh $\mathcal{T}_H$. Let $\Psi$ and $\Psi_H^m$ be the corresponding sensitivities, \ respectively given by \eqref{eq:2chainrule} and by \eqref{eq:directheatsensitivityCN}. Let $\widetilde{\Psi_H}^n$ be the {}{piecewise} quadratic interpolation of the coarse solution $\Psi_H^m$ given by \eqref{parabola2}. Then,
    \small{
  \begin{align*}
  & \forall n=0,\dots,N_T,\ \norm{\widetilde{\Psi_{{}{H}}}^n- \Psi(t^n)}_{L^2(\Omega)} \leq CH^2\big[\norm{\Psi^0}_{H^2(\Omega)} +  \int_0^{t^n} \norm{\Psi_t}_{H^2(\Omega)} \textrm{ ds } + C(\mu) \big[\int_0^{t^n} \norm{u_t}_{H^2(\Omega)}^2 \textrm{ ds }  \big]^{1/2}\big] \\
    &\quad \quad \quad \quad \quad \quad \quad \quad \quad + C \Delta t_G^2 \big(  \int_0^{t^n}\norm{ \Psi_{ttt}}_{L^2(\Omega)} \textrm{ ds }  + \big[  \int_0^{t^n}\norm{\Delta u_{tt}}_{L^2(\Omega)}^2 \textrm{ ds } \big]^{1/2} + C(\mu) \big[ \big[ \int_0^{t^n}\norm{u_{ttt}}_{L^2(\Omega)}^2 \textrm{ ds }  ]^{1/2} +  \int_0^{t^n}  \norm{ \Delta \Psi_{tt}}_{L^2(\Omega)}\textrm{ ds }  \big] \big).
\end{align*}
}
\end{corollary}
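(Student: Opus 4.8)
The plan is to estimate $\widetilde{\Psi_H}^n-\Psi(t^n)$ by inserting the value at $t^n$ of the quadratic-in-time interpolant built from the \emph{exact} sensitivities at the three surrounding coarse times $\wt^{m-2},\wt^{m-1},\wt^m$. Writing $I^2_n[\Psi(\wt^m)]$ for the result of applying the construction \eqref{parabola} to the nodal data $\big(\Psi(\wt^k)\big)_k$ (so that $\widetilde{\Psi_H}^n=I^2_n[\Psi_H^m]$), the triangle inequality gives
\begin{equation*}
\norm{\widetilde{\Psi_H}^n-\Psi(t^n)}_{L^2(\Omega)} \leq \norm{I^2_n[\Psi_H^m]-I^2_n[\Psi(\wt^m)]}_{L^2(\Omega)} + \norm{I^2_n[\Psi(\wt^m)]-\Psi(t^n)}_{L^2(\Omega)},
\end{equation*}
so the error splits into the quadratic interpolation of the fully-discrete nodal errors and the purely temporal interpolation error of $\Psi$.

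First I would bound the interpolation of the nodal errors. By linearity of $I^2_n$ in its data, $I^2_n[\Psi_H^m]-I^2_n[\Psi(\wt^m)]=I^2_n\big[\Psi_H^m-\Psi(\wt^m)\big]=\sum_{k=m-2}^{m}L_k(t^n)\,\big(\Psi_H^k-\Psi(\wt^k)\big)$ with $L_k$ the Lagrange basis polynomials; since $t^n\in I_m\subset[\wt^{m-2},\wt^m]$ (and the same parabola is reused on $I_1$, where $t^n\in[\wt^0,\wt^1]\subset[\wt^0,\wt^2]$), the coefficients $|L_k(t^n)|$ are bounded by an absolute constant, namely the Lebesgue constant of quadratic interpolation at three equispaced nodes restricted to their convex hull. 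Hence
\begin{equation*}
\norm{I^2_n[\Psi_H^m]-I^2_n[\Psi(\wt^m)]}_{L^2(\Omega)} \leq C\max_{k\in\{m-2,m-1,m\}}\norm{\Psi_H^k-\Psi(\wt^k)}_{L^2(\Omega)},
\end{equation*}
and Theorem \ref{corollaryPsiiCrankNicolson} bounds each term on the right-hand side. Since $\wt^k\leq\wt^m$ and $|t^n-\wt^m|\leq 2\Delta t_G$, the time integrals $\int_0^{\wt^k}(\cdot)$ are controlled by the corresponding $\int_0^{t^n}(\cdot)$ up to absorbing an $\mathcal{O}(\Delta t_G)$ remainder into the constants (or, more crudely, by extending all upper limits to $T$), so this term already has the structure claimed in the corollary, with the $H^2$-block and the $\Delta t_G^2$-block.

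It then remains to estimate the temporal interpolation remainder $\norm{I^2_n[\Psi(\wt^m)]-\Psi(t^n)}_{L^2(\Omega)}$, which is the classical part: using the integral (Peano-kernel) form of the error of quadratic Lagrange interpolation on an interval of length $2\Delta t_G$, together with the regularity of $\Psi$ already invoked in Theorem \ref{corollaryPsiiCrankNicolson}, the remainder at the interior point $t^n$ is the integral of $\Psi_{ttt}(s)$ against a kernel of size $\mathcal{O}(\Delta t_G^2)$ supported on $[\wt^{m-2},\wt^m]$, so that
\begin{equation*}
\norm{I^2_n[\Psi(\wt^m)]-\Psi(t^n)}_{L^2(\Omega)} \lesssim \Delta t_G^2\int_{\wt^{m-2}}^{\wt^m}\norm{\Psi_{ttt}(s)}_{L^2(\Omega)}\,\textrm{ds} \leq C\Delta t_G^2\int_0^{t^n}\norm{\Psi_{ttt}}_{L^2(\Omega)}\,\textrm{ds},
\end{equation*}
up to the same harmless adjustment of the integration limit. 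Adding the two contributions yields the stated estimate. The main obstacle I anticipate is purely bookkeeping: checking that the Lagrange-coefficient bound is genuinely uniform across all subintervals (in particular the special treatment of $I_1$) and that passing from $\int_0^{\wt^m}$ and $\int_0^{\wt^k}$ to $\int_0^{t^n}$ only perturbs the constants; beyond that, the argument is just linearity of $I^2_n$, the triangle inequality, and a textbook interpolation-remainder bound, exactly as in the parabolic-state case of \cite{paraboliccontext}.
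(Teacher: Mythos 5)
Your argument is correct and is essentially the intended one: the paper states this corollary without an explicit proof, deferring to the analogous interpolation argument for the state equation in \cite{paraboliccontext}, which uses precisely your splitting --- linearity of the quadratic interpolant applied to the nodal errors (each bounded by Theorem \ref{corollaryPsiiCrankNicolson}, with the Lagrange coefficients uniformly bounded since $t^n$ stays in the convex hull of the three nodes, including on $I_1$) plus the classical $\mathcal{O}(\Delta t_G^2)$ Peano-kernel remainder involving $\Psi_{ttt}$. The only residual point, replacing the upper integration limit $\widetilde{t}^m$ (which may slightly exceed $t^n$) by $t^n$ at the cost of adjusting constants, is the harmless bookkeeping you already flagged.
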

In the next section, we proceed with the adjoint state formulation.
\subsection{Sensitivity analysis: The adjoint problem.}
The adjoint {}{can result } from an inverse method, where we aim at retrieving the optimal parameter of an objective function $\mathcal{F}$. The latter will have a different meaning whether the goal is to retrieve the parameters from several {}{measurements} (for parameter identification) or if we want to optimize a function depending on the variables (PDE-constrained optimization).
{}{In this paper, we focus on an objective $\mathcal{F}$ with the following form} (in its fully-discretized version)
\begin{equation}
\label{Fobj}
    \mathcal{F}(\mu)= \frac{1}{2}\sum_{n=0}^{N_T} \underbrace{\norm{u_h^n(\mu)-\overline{u}^n}^2_{L^2(\Omega)}}_{{}{\norm{\mathrm{err}(t^n;\ \mu)}_{L^2(\Omega)}^2}},
\end{equation}
where the term $\overline{u}^n$ refers to the {}{measurements} which may be noisy (here for simplicity we consider the case of {}{measurements} on the variables although it may be given by other outputs). 
    Note that by differentiating $\mathcal{F}$ with respect to the parameters $\mu_p$, $p=1,\dots, P$,  we can observe the influence on the objective function of the input parameters through the normalized sensitivity coefficients (also called elasticity of $\mathcal{P}$) \cite{reviewSA}
\begin{equation*}
    S_k=\frac{\pt \mathcal{F}}{\pt \mu_k}(\mu)\times \frac{\mu_k}{\mathcal{F}(\mu)},\ k=1,\dots,P.
\end{equation*}
\subsubsection{The continuous setting.}
\begin{itemize}
  \item  Let us for instance consider the first case outlined above given in its continuous version by
\begin{equation}
\label{Fobjcontinuous}
    \mathcal{F}(\mu)=\frac{1}{2} \int_{0}^T \norm{\mathrm{err}(t;\ \mu)}^2_{L^2(\Omega)}.
\end{equation}
\item To minimize $\mathcal{F}$ under the constraint that "$u$ is the solution of our model problem \eqref{heatEq2}", we consider the following Lagrangian with $(\chi, \varphi)$ the {}{Lagrange multipliers}
  \begin{equation}
  \label{lagrangian}
    \mathcal{L}(u,\chi,\varphi,\zeta;\mu)=\mathcal{F}(\mu)+ \int_0^T \big[(\chi, (\nabla \cdot (A(\mu)\nabla u) + f - u_t))_{L^2(\Omega)} + (\varphi,u)_{L^2(\pt \Omega)} \big] \textrm{ ds}+ {}{ (\zeta,u_0(\mu)-u(0;\mu))_{L^2(\Omega)}},
  \end{equation}
  where \begin{itemize}
  \item $\chi \in H_0^1(\Omega)$ is the multiplier associated to the constraint ``$u$ is a solution of \eqref{heatEq2}'',
    \item $\varphi \in \mathbb{R}$ is the multiplier associated to the constraint of the Dirichlet boundary condition on $\pt \Omega$. Since we here consider {}{a} homogeneous condition, we just impose $\varphi=0$,
     \item {}{$\zeta \in H_0^1(\Omega)$ is the multiplier associated to the constraint ``$u(0;\mu)$ is the exact initial condition''. As for $\varphi$, we impose $\zeta=0.$}
  \end{itemize}
  \item  Differentiating $\mathcal{L}$ with respect to the parameter $\mu_p$, for $p=1,\dots,P$, we obtain the following adjoint system in its variational form
  (see {}{\cite{sykes1984adjoint,givoli2021tutorial}} for more details)
 \begin{numcases}{}
  \label{eq:chicontinuous}
  &  Find $\chi(t) \in V$ for $t \in [0,T]$ such that \nonumber \\
&$ ( \chi_t(t),v)= - (\frac{\pt err}{\pt u}(t;\mu),v) + (A(\mu) \nabla \chi(t),\nabla v), \ \forall v \in V, t < T$, \notag \\
      & $\chi(\cdot,T) = 0,\ \textrm{ in }\Omega$.
\end{numcases}
\item After solving \eqref{eq:chicontinuous} with the parameter $\mu$, one can compute the sensitivities of the objective function $\frac{d\mathcal{F}}{d\mu_p}$ by noticing that  {}{\cite{sykes1984adjoint,givoli2021tutorial}}
  \begin{equation}
  \label{Dmup1}
    \frac{d\mathcal{F}}{d\mu_p}= \frac{d\mathcal{L}}{d\mu_p}=\int_0^T \ \Big(\chi, \nabla \cdot (\frac{\pt A}{\pt \mu_p}(\mu) \nabla u )\big) \textrm{ ds}.
    \end{equation}
  \end{itemize}
  \subsubsection{Discretized setting.}
 In analogy with the direct method, we first discretize the system \eqref{eq:chicontinuous} in space, and then apply an Euler scheme backward in time with the fine grids and the Crank-Nicolson method with the coarse ones. The semi-discretized version on $\mathcal{T}_h$ {}{reads}
  \begin{numcases}{}
  \label{eq:chicontinuous2}
  &  Find $\chi_h(t) \in V_h$ for $t \in [0,T]$ such that \nonumber \\
&$ (\chi_{h,t}(t),v_h) - a(\chi_h,v_h;\mu)= - \big(\frac{\pt \textrm{err}_h}{\pt u_h}(t;\mu),v_h\big), \ \forall v_h \in V_h, t < T$, \notag \\
      & $\chi_h(\cdot,T) = 0,\ \textrm{ in }\Omega$.
\end{numcases}
With the fully-discretized version on the fine grids, the adjoint system \eqref{eq:chicontinuous2} becomes 
    \begin{numcases}{}
  &  Find $\chi_{h}^n \in V_h$ for $n \in \{0,\dots, N_T\}$ such that \nonumber \\
  & $ (\overline{\pt} \chi_{h}^n,v_h) - a(\chi_h^n,v_h;\mu) =-(u_h^n-\overline{u}^n,v_h), \ \forall n =0,\dots , N_T-1$, \nonumber \\
    & $ \chi_h^{N_T}(\cdot)\ =0 $.
    \label{adjointdisceulerweak}
    \end{numcases}
    Note that to compute $\frac{\pt \textrm{err}_h^n}{\pt u_h}$, we need the fine solutions $u_h^n$ and the associated {}{measurements}. 
   As for the state variable \eqref{varpara2discCN}, we also compute the adjoint on the coarse mesh with the Crank-Nicolson scheme,
       \begin{numcases}{}
  &  Find $\chi_{H}^m \in V_H$ for $m \in \{0,\dots, M_T\}$ such that \nonumber \\
  & $ (\overline{\pt} \chi_{H}^m,v_H) - a(\widehat{\chi^m_H},v_H;\mu) =-\frac{1}{2}\big( (u_H^m-\overline{u}^m,v_H)+(u_H^{m-1}-\overline{u}^{m-1},v_H) \big), \ \forall m =0,\dots , M_T-1$, \nonumber \\
    & $ \chi_H^{M_T}(\cdot)\ =0 $,
    \label{adjointdisceulerweakCN}
    \end{numcases}
    with  $\widehat{\chi_H^m}=\frac{1}{2}(\chi_H^m+\chi_H^{m-1}).$
   Finally, note that the problems \eqref{adjointdisceulerweak} and \eqref{adjointdisceulerweakCN} are well-posed, since they are solved backward in time (see \cite{givoli2021tutorial} for precisions in the general setting of time-dependent PDEs).\\
In the next section, we recall the NIRB algorithm in the context of parabolic equations. 
\subsection{ The Non-Intrusive Reduced Basis method (NIRB) in the context of parabolic equations.}
\label{NIRBParabolic}
Let $u(\mu)$ be the exact solution of problem \eqref{heatEq2} for a parameter  $\mu \in \mathcal{G}$. With the NIRB two-grid method, we aim at quickly approximating this solution by using a reduced space, denoted $X_h^N$, constructed from $N$ fully discretized solutions of \eqref{varpara2disc}, namely the so-called snapshots. We emphasize that the efficiency of this method relies on the assumption that its manifold solution $\mathcal{S} = \{u(\mu), \mu \in \mathcal{G} \}$ has a small Kolmogorov $N$-width \cite{kolmo}. Since each snapshot is a HF finite element approximation in space at a time $t^n$, $n = 0,..., N_T$ ($N_T$ potentially being very high), not all of the time steps may be required for the construction of the reduced space. Let $N$ refer to the number of snapshots employed for the RB construction and $N_{\mu}$ be the corresponding number of parameters used. For each parameter $\mu_i$, $i=1,\dots,N_{\mu},$ selected during the RB generation,  the associated number of time steps employed is denoted $N^i$.
Thus, the RB space is defined as 
\begin{equation}
    X_h^N:=\textrm{Span}\{u_h^{(n_j)_i}(\mu_i)| \ (n_j)_i \subset \{0,\cdots,N_T \}, \ i=1,\dots,N_{\mu}, \ j=1,\dots,N^i \}, \textrm{ with }N:=\overset{N_{\mu}}{\underset{i=1}{\sum}} N^i .
    \end{equation}
{}{Once the RB is created, the NIRB approximation, denoted $u_{Hh}^{N,n}$, takes a coarse FEM solution and projects it onto the RB space. As a result, the classical approximation has the following form: 
\begin{equation*}     
     \textrm{For }n=0,\dots, N_T, \quad   u_{Hh}^{N,n}(\mu):= \overset{N}{\underset{i=1}{\sum}}\ A_{i}^n(\mu)\Phi^{h}_{i},
\end{equation*}
where $A_{i}^n(\mu)$ are coefficients of the projection of the coarse FEM solution onto $X_h^N$. }

We now recall the offline/online decomposition of the NIRB two-grid procedure with parabolic equations and summarize it in Algorithms \ref{offline} and \ref{online}:
\begin{itemize}
\item \textbf{``Offline step''}\\
  The offline part of the algorithm allows us to construct the reduced space $X_h^N$.\\
  \begin{enumerate}
  \item  From training parameters $(\mu_i)_{i \in \{1,\dots , N_{train}\}}$, we define $\mathcal{G}_{train}= \underset{i \in \{1,\dots, N_{train}\}}{\cup} \mu_i$. Then, in order to construct the RB, we employ a greedy procedure which adequately chooses the parameters $(\mu_i)_{i=1,\dots,N_{\mu}}$ within $\mathcal{G}_{train}$. This procedure is described in Algorithm \ref{algogreedy} (with the setting $N_{\mu}=N$ to simplify notations). Note that a POD-greedy algorithm may also be employed \cite{paraboliccontext,haasdonk2008reduced,haasdonk2013convergence,knezevic2010certified}.
  
  \begin{algorithm}
     \caption{Greedy algorithm }  \label{algogreedy}
       \textbf{Input:} $tol$, \ $\{\uu_h^n(\mu_1),\cdots,\uu_h^n(\mu_{N_{train}}) \textrm{ with } \mu_i  \in \mathcal{G}_{train}, \ n=0,\dots, N_T \}.$ \\
    \textbf{Output:} Reduced basis $\{\Phi_1^h,\cdots,\Phi_{N}^h\}$. \\
    \begin{algorithmic}
      \State $\triangleright$ Choose $\mu_1,n_1=\underset{ \mu \in \mathcal{G}_{train},\  n \in \{ 0,\dots,N_T \}}{\textrm{arg} \max} \norm{\uu_h^n(\mu)}_{L^2(\Omega)}$ ,
      \State $\triangleright$  Set $\Phi_1^h=\frac{\uu_h^{n_1}(\mu_1)}{\norm{\uu_h^{n_1}(\mu_1)}_{L^2}}$ \; 
\State $\triangleright$ Set $ \mathcal{G}_1=\{\mu_1, n_1 \}$ and $X_h^1=\spann\{ \Phi_1^h \}$.
  \For{$k=2$ to $N$}:
  \State $\triangleright$ $\mu_k,n_k=$ arg $ \underset{(\mu,\ n) \in (\mathcal{G}_{train}\times \{0,\dots,N_T\}) \backslash \mathcal{G}_{k-1}}{ \textrm{max}} \norm{ \uu_h^n ( \mu ) - P^{k-1} ( \uu_h^n(\mu) ) }_{L^2}$,  with $P^{k-1}(u_h^n(\mu)):=\overset{k-1}{\underset{i=1}{\sum}} (u_h^n(\mu),\Phi_i^h)\ \Phi_i^k.$
  \State $\triangleright$ Compute $\widetilde{\Phi_k^h}=\uu_h^{n_k}(\mu_k)-P^{k-1}(u_h^{n_k}(\mu_k))$ and set $\Phi_k^h=\frac{\widetilde{\Phi^h_k}}{\norm{\widetilde{\Phi^h_k}}_{L^2(\Omega)}}$ 
    \State $\triangleright$ Set $\mathcal{G}_k=\mathcal{G}_{k-1}\cup \{ \mu_k\}$ and $X_h^k=X_h^{k-1} \oplus \spann\{\Phi^h_k \} $ 
   \State   $\triangleright$ Stop when $\norm{\uu_h^n(\mu)-P^{k-1}(\uu_h^n(\mu))}_{L^2}\leq tol, \ \forall \mu \in \mathcal{G}_{train},\ \forall n= 0,\dots, N_T. $ 
  \EndFor
  \end{algorithmic}
  \end{algorithm}
 
   The RB time-independent functions, denoted $(\Phi^{h}_{i})_{i=1,\dots,N}$, are generated at the end of this step from fine fully-discretized solutions $\{\uu_{h}^n(\mu_i)\}_{i \in \{1,\dots, N_{\mu}\},\ n=\{0,\dots,N_T\}}$  by solving the problem with the first order scheme (e.g. \eqref{varpara2disc} in case of the heat equation) with the HF solver. Note that even if all the time steps are computed, only $N^i$ are used for each $i \in \{1,\dots,N_{\mu} \}$ in the RB construction. Since at each step $k$ of the procedure, all sets added in the RB are in the orthogonal complement of $X_h^{k-1}$, it yields an $L^2$ orthogonal basis without further processing. Hence, $X_h^{N}$ can be defined as $X_h^{N}=\textrm{Span} \{ \Phi^h_1, \dots ,  \Phi^h_N\}$, with $(\Phi_h^i)_{i=1,\dots,N}$ $L^2$-orthonormalized functions.
   \begin{remark}
     In practice, the algorithm is halted with a stopping criterion such as an error threshold or a maximum number of basis functions to generate.
 \end{remark}
 \item Then, we solve the following eigenvalue problem:
    \begin{numcases}
      \strut \textrm{Find } \Phi^{h} \in X_h^{N}, \textrm{ and } \lambda \in \mathbb{R} \textrm{ such that: }    \nonumber\\
        \forall \vv \in X_h^{N}, \int_{\Omega} \nabla \Phi^{h} \cdot \nabla \vv \ d\xx= \lambda \int_{\Omega} \Phi^{h} \cdot \vv \ d\xx, \label{orthohuhu} 
    \end{numcases} 
   We get an increasing sequence of eigenvalues $\lambda_i$, and orthogonal eigenfunctions $(\Phi^{h}_{i})_{i=1,\cdots,N}$, which do not depend on time, orthonormalized in $L^2(\Omega)$ and orthogonalized in $H^1(\Omega)$. Note that with the Gram-Schmidt procedure of the classical greedy {}{A}lgorithm \ref{algogreedy}, we only obtain an $L^2$-orthonormalized RB. 
\item We remark that, for any parameter  $\mu_k, \ k=1,\dots,N_{\mu}$, the classical NIRB approximation differs from the HF $u_h(\mu_k)$ computed in the offline stage (see also \cite{paraboliccontext}). Thus, to improve NIRB accuracy, we use a "rectification post-processing". We construct rectification matrices denoted $\mathbf{R}^n$ for each fine time step $n=0,\dots,N_T$. They are built from the HF snapshots $u_h^n$, $n=0,\dots,N_{T}$ and {}{ coarse snapshots} generated with the higher order scheme (e.g. \eqref{varpara2discCN}), {}{interpolated in time at the fine time steps by \eqref{parabola},} and whose parameters are the same as for the fine snapshots. 
More precisely, for all $n = 0,\dots, N_T,$ for all $i=1,\cdots,N,\  \textrm{and}  \ $for all $\mu_k \in  \mathcal{G}_{train}$, we define the \textit{coarse coefficients} as 
\begin{equation}
    \quad  A_{k,i}^n=\int_{\Omega} \widetilde{u_H}^n(\mu_k) \cdot \Phi_i^h\ \textrm{d}\xx, \label{Aj}
\end{equation}
{}{where $\widetilde{u_H}^n$ stands for an interpolation of second order in time of the coarse snapshots, }and the \textit{fine coefficients} as
\begin{equation}
    \quad B_{k,i}^n=\int_{\Omega} \uu_h^n(\mu_k) \cdot \Phi_i^h\ \textrm{d}\xx.     \label{Bj}
\end{equation}
Then, we compute the vectors
    \begin{equation}
              \mathbf{R}_{u,i}^n=((\mathbf{A}^n)^T\mathbf{A}^n+\delta \mathbf{I}_{N})^{-1}(\mathbf{A}^n)^T \mathbf{B}_i^n, \quad  i=1, \cdots,N,
              \label{rectiff1}
    \end{equation}
    where $\mathbf{I}_{N}$ refers to the identity matrix and $\delta$ is a regularization parameter.
 \begin{remark}
 Note that since every time step has its own rectification matrix, the matrix $\mathbf{A}^n$ is a ``flat'' rectangular matrix 
($N_{train}\leq N$), and thus the parameter $\delta$ is required for the inversion of $(\mathbf{A}^n)^T\mathbf{A}^n$. \\
  
   \end{remark}

   {}{To sum up this offline procedure, we resort to Algorithm \ref{offline} where the two last steps are concerned with the rectification post-treatment. }
   \begin{algorithm}
     \caption{{}{Offline rectified NIRB algorithm }}  \label{offline}
     {}{  \textbf{Input:} $tol$, $\mathcal{G}_{train}$  \\
    \textbf{Output:} Reduced basis $\{\Phi_1^h,\cdots,\Phi_{N}^h\}$. \\
    }
    \begin{algorithmic}
    \State    {}{ $\triangleright$ Construct  $\{\uu_h^n(\mu_1),\cdots,\uu_h^n(\mu_{N_{train}}) \textrm{ with } \mu_i  \in \mathcal{G}_{train}, \ n=0,\dots, N_T \}.$}
      \State {}{$\triangleright$ Find a RB $\{\Phi_1^h,\cdots,\Phi_{N}^h\}$ of $X_h^N$ via Algorithm \ref{algogreedy}.
      \State $\triangleright$ Solve \eqref{orthohuhu}.
      \State $\triangleright$ Generate $\{\widetilde{u_H}^n(\mu_1),\cdots ,\widetilde{u_H}^n(\mu_{N_{train}}) \textrm{ with } \mu_i~\in~\mathcal{G}_{train}, \ n=0,\dots, N_T \}$ by computing the coarse snapshots and interpolating them via \eqref{parabola}.
      \State $\triangleright$ Construct the rectification matrices \eqref{rectiff1}. }
  \end{algorithmic}
  \end{algorithm}
 \end{enumerate}
\item \textbf{``Online step''}\\
  The online part of the algorithm is much faster than a {}{fine }HF evaluation. 
  \begin{enumerate}
    \setcounter{enumi}{0}
  \item 
   For a new parameter $\mu \in \mathcal{G}$ we are interested in, we solve the problem with the higher order scheme (e.g. \eqref{varpara2discCN}) on the coarse mesh $\mathcal{T}_H$ at each time step $m=~0,\dots, M_T$. 
  \item   We quadratically interpolate in time the coarse solution on the fine time grid with \eqref{parabola}. 
    \item  Then, we linearly interpolate in space $\widetilde{u_H}^n(\mu)$ on the fine mesh in order to compute the $L^2$-inner product with the RB functions. Finally, the approximation used in the two-grid method is
      \begin{equation}
        \label{NIRBapproximation0}
     \textrm{For }n=0,\dots, N_T, \quad   u_{Hh}^{N,n}(\mu):= \overset{N}{\underset{i=1}{\sum}}(\widetilde{u_H}^n(\mu),\Phi^{h}_{i})\ \Phi^{h}_{i},
      \end{equation}
      and with the rectification post-treatment step, it becomes
         \begin{equation}
        \label{NIRBapproximation0rect}
       \mathbf{R}_u^n[u_{Hh}^{N}](\mu):= \overset{N}{\underset{i,j=1}{\sum}}R_{u,ij}^n\ (\widetilde{u_H}^n(\mu),\Phi^{h}_{j})\ \Phi^{h}_{i},
         \end{equation}
         where $(\mathbf{R}^{n}_u)_{ij}=R_{u,ij}^n$ is the rectification matrix at time $t^n$, given by \eqref{rectiff1}.
\end{enumerate}

  \begin{algorithm}
     \caption{{}{Online (classical or rectified) NIRB algorithm }  }\label{online}
  {}{     \textbf{Input:}  Reduced basis $\{\Phi_1^h,\cdots,\Phi_{N}^h\}, \ \textrm{and a parameter of interest }\mu \in \mathcal{G}$ ; Rectification matrices in case of the rectified version\\
    \textbf{Output:} NIRB approximation.} \\
    \begin{algorithmic}
    \State {}{$\triangleright$ Compute the coarse solution associated to $\mu$.
      \State  $\triangleright$ Interpolate it via \eqref{parabola2}.
  \State $\triangleright$ Construct the NIRB approximation via \eqref{NIRBapproximation0} or \eqref{NIRBapproximation0rect}.}
  \end{algorithmic}
  \end{algorithm}
 \end{itemize}
In \cite{paraboliccontext}, we have proven the following estimate on the heat equation
    \begin{equation}
\textrm{ for } n = 0,\dots, N_T, \ \Big|u(t^n)(\mu)-u_{Hh}^{N,n}(\mu)\Big|_{H^1(\Omega)}\leq \varepsilon(N) + C_1(\mu) h +C_2(N) H^2 +  C_3(\mu) \Delta t_F +C_4(N) \Delta t_G^2,    \label{EllipticEst0}
\end{equation}
    where $C_1, C_2, C_3$ and $C_4$ are constants independent of $h$ and $H$, $\Delta t_F$ and $\Delta t_G$.
     The term $\varepsilon(N)$ depends on a proper choice of the RB space as a surrogate for the best approximation space associated to the Kolmogorov $N$-width. 
    It decreases when $N$ increases and it is linked to the error between the fine solution and its projection on the reduced space $X_h^N$, given by
    \begin{equation}
      \label{truerror}
  \Big|u_h^n(\mu) - \overset{N}{\underset{i=1}{\sum}}(u_h^n(\mu),\Phi_i^h)\ \Phi_i^h\Big|_{H^1(\Omega)}.
    \end{equation}
   On the other hand, the constants $C_2(N)$ and $C_4(N)$ in \eqref{EllipticEst0} increase with $N$. Thus, a trade-off needs to be done between increasing $N$ to obtain a more accurate manifold, and keeping these constants as low as possible. {}{Details on the behavior of these constants are provided in \cite{paraboliccontext}}.\\
If $H$ is such as $H^2 \sim h$ and $\Delta t_G^2\sim \Delta t_F$ with $C_2(N)$ and $C_4(N)$ not too large, the estimate \eqref{EllipticEst0} entails an error estimate in $\varepsilon(N) + \mathcal{O}(h + \Delta t_F)$. Therefore, if $\varepsilon(N)$ is small enough, we recover an optimal error estimate in $L^{\infty}(0,T;H^1_0(\Omega))$. 
Our aim is to extend this error estimate \eqref{EllipticEst0} to the context of sensitivity analysis.

\section{NIRB error estimate on the sensitivities}
\label{NIRBproof}

{}{Let $u(\mu)$ be the exact solution of $\mathcal{P}$ (e.g. \eqref{heatEq2} for the heat equation) for a parameter  $\mu=(\mu_1,\dots,\mu_P) \in \mathcal{G}$ and $\Psi(\mu_p)$ its \sens with respect to the parameter $\mu_p, p=1,\dots, P$. We define the training set $\mathcal{G}_{p,train}= \underset{i \in \{1,\dots,N_{p,train}\}}{\cup} \widetilde{\mu}_i$, from a set of training parameters $(\widetilde{\mu}_i)_{i=1,\cdots,N_{p,train}}.$
The RB function are generated via the offline steps described above (see Algorithms \ref{algogreedy} and \ref{offline}), but from fine HF fully-discretized sensitivities  $\{\Psi_{p,h}^n(\widetilde{\mu}_i)\}_{i \in \{1,\dots N_{\mu,p}\},\ n=\{0,\dots,N_T\}}$ ($N_{\mu,p}\leq N_{p,train}$) instead of the state solutions $\{u_h^n(\mu_i)\}_{i=\{1, \dots, N_{\mu}\}, n=\{0,\dots,N_T\}}$ (e.g. with  \eqref{eq:directheatsensitivity} in case of the heat equation).
At the end of this procedure, we obtain  $N_p$ $L^2$ and $H^1$ orthogonal RB (time-independent) functions, denoted $(\zeta^{h}_{p,i})_{i=1,\dots,N_p}$, of the reduced spaces  $X_{p,h}^{N_p}:= \textrm{Span}\{\zeta_{p,1}^h,\dots,\zeta_{p,N_p}^h\}$ for $p=1,\dots,P$, as well as rectification matrices, that we denote $\mathbf{R}_{\Psi}^{p,n}$, associated to the {}{sensitivity} problem (\eqref{eq:directheatsensitivity} for the heat equation), for each $p \in \{1,\dots, P\}$ and each fine time step $n \in \{0,\dots, N_T\}$, such that
    \begin{equation}
              \mathbf{R}_{\Psi,i}^{p,n}=((\mathbf{A}^{p,n})^T\mathbf{A}^{p,n}+\delta_p \mathbf{I}_{N_p})^{-1}(\mathbf{A}^{p,n})^T \mathbf{B}_{i}^{p,n}, \quad  i=1, \cdots,N_p,
              \label{rectiff2}
    \end{equation}
    where
    \begin{align}
      & \forall i=1,\cdots,N_p,\quad  \textrm{ and }  \quad \forall \widetilde{\mu}_k \in  \mathcal{G}_{p},\nonumber \\
      & \quad  A_{k,i}^{p,n}=\int_{\Omega} \widetilde{\Psi_{p,H}}^n(\widetilde{\mu}_k) \cdot \zeta_{p,i}^h\ \textrm{d}\xx, \label{Ajj} \\
      & \quad B_{k,i}^{p,n}=\int_{\Omega} \Psi_{p,h}^n(\widetilde{\mu}_k) \cdot \zeta_{p,i}^h\ \textrm{d}\xx,     \label{Bjj}
\end{align}
 and where we use \eqref{parabola2} to compute $\widetilde{\Psi_{p,H}}^n(\widetilde{\mu}_k)$.
}

{}{
The online part of the algorithm is much faster than {}{P+1} fine HF evaluations and follows the NIRB two-grid online part, described above in the context of parabolic state equations \ref{NIRBParabolic} (see Algorithm \ref{online}).
 The approximation on the sensitivity obtained from the classical two-grid method is
      \begin{equation}
        \label{NIRBapproximation00}
     \textrm{For }n=0,\dots, N_T, \quad   \Psi_{p,Hh}^{N_p,n}(\mu):= \overset{N_p}{\underset{i=1}{\sum}}(\widetilde{\Psi_{p,H}}^n(\mu),\zeta^{h}_{p,i})\ \zeta^{h}_{p,i},
      \end{equation}
      and with the rectification post-treatment step, it becomes
         \begin{equation}
        \label{NIRBapproximation0rect0}
      \mathbf{R}_{\Psi}^{p,n}[\Psi_{p,Hh}^{N_p}](\mu):= \overset{N_p}{\underset{i,j=1}{\sum}}R_{ij}^{p,n}\ (\widetilde{\Psi_{p,H}}^n(\mu),\zeta^{h}_{p,j})\ \zeta^{h}_{p,i},
         \end{equation}
         where $\mathbf{R}_{\Psi}^{p,n}$ is the rectification matrix at time $t^n$, given by \eqref{rectiff2}.
}
\begin{paragraph}{Main result}
Our main theoretical result is the following theorem, which applies to the direct sensitivity formulation.
\begin{theorem}{(NIRB error estimate for the sensitivities.)}
\label{EllipticEst}
Let $A(\mu)=\mu \ I_d$, with $\mu \in \mathbb{R}_*^+$, and let us consider the problem \ref{heatEq2} with its exact solution $u(\xx, t; \mu)$, and the full discretized solution $u_h^n(\xx;\mu)$ to the problem \ref{varpara2disc}. 
Let $\Psi(\xx, t ; \mu)$ and $\Psi_h^n(\xx;\mu)$ {}{be} the corresponding \sensfct, given by \eqref{eq:2chainrule} and \eqref{eq:directheatsensitivity}.
Let $(\zeta^h_{i})_{i=1,\dots,N_1}$ be the $L^2$-orthonormalized and $H^1$-orthogonalized RB generated with the greedy {}{A}lgorithm \ref{algogreedy}.
Let us consider the NIRB approximation defined by \eqref{NIRBapproximation00} i.e.
 \begin{equation*}
     \textrm{for }n=0,\dots, N_T, \quad   \Psi_{Hh}^{N,n}(\mu):= \overset{N_1}{\underset{i=1}{\sum}}(\widetilde{\Psi_H}^n(\mu),\zeta^{h}_{i})\ \zeta^{h}_{i},
 \end{equation*}
 where $\widetilde{\Psi_H}^n(\mu)$ is given by \eqref{parabola2}.
Then, the following estimate holds
\begin{equation}
\forall n = 0,\dots, N_T, \ \Big|\Psi(t^n)(\mu)-\Psi_{Hh}^{N,n}(\mu)\Big|_{H^1(\Omega)}\leq \varepsilon(N) + C_1(\mu) h +C_2(\mu,N) H^2 +  C_3(\mu) \Delta t_F +C_4(\mu,N) \Delta t_G^2,  \label{estest2}
\end{equation}
where $C_1, C_2, C_3$ and $C_4$ are constants independent of $h$ and $H$, $\Delta t_F$ and $\Delta t_G$. The term $\varepsilon$ depends on the Kolmogorov $N$-width and {}{measurements} the error given by \eqref{truerror}. 
  \end{theorem}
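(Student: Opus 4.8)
The plan is to follow the same triangle-inequality decomposition used in the parabolic state case (leading to \eqref{EllipticEst0}), but now with the sensitivity function $\Psi$ playing the role of $u$, and to feed in the new fully-discretized sensitivity estimates proved above (Theorems \ref{corollaryPsiiH1disc}, \ref{corollaryPsiiCrankNicolson} and Corollary \ref{corollaryCorollaryPsiiCrankNicolson}) in place of the classical ones from \cite{thomee2}. Concretely, I would write, for each fine time step $n$,
\begin{align*}
\Big|\Psi(t^n)(\mu)-\Psi_{Hh}^{N,n}(\mu)\Big|_{H^1(\Omega)}
&\leq \Big|\Psi(t^n)(\mu)-\Psi_h^n(\mu)\Big|_{H^1(\Omega)}
 + \Big|\Psi_h^n(\mu)-P_{X_h^N}\Psi_h^n(\mu)\Big|_{H^1(\Omega)} \\
&\quad + \Big|P_{X_h^N}\Psi_h^n(\mu)-\Psi_{Hh}^{N,n}(\mu)\Big|_{H^1(\Omega)},
\end{align*}
where $P_{X_h^N}$ is the $L^2$-orthogonal projection onto the reduced space. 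The first term is bounded by $C_1(\mu)h + C_3(\mu)\Delta t_F$ via Theorem \ref{corollaryPsiiH1disc} (using $u\in H^1(0,T;H^2)\cap H^2(0,T;L^2)$, which controls the $\int\|\Psi_t\|_{H^2}$, $\int\|u_t\|_{H^2}^2$, $\int\|\Psi_{tt}\|_{L^2}$, $\int\|u_{tt}\|_{L^2}^2$ integrals appearing on the right-hand side). The second term is $\varepsilon(N)$ by definition, as in \eqref{truerror}.

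The third term is where the coarse solve and the interpolation/rectification enter. Since $P_{X_h^N}\Psi_h^n(\mu) = \sum_i (\Psi_h^n(\mu),\zeta_i^h)\zeta_i^h$ and $\Psi_{Hh}^{N,n}(\mu) = \sum_i (\widetilde{\Psi_H}^n(\mu),\zeta_i^h)\zeta_i^h$, the difference is $\sum_i (\Psi_h^n(\mu)-\widetilde{\Psi_H}^n(\mu),\zeta_i^h)\zeta_i^h$, i.e. the projection onto $X_h^N$ of the error between the fine fully-discretized sensitivity and the (space-)interpolated, time-interpolated coarse sensitivity. I would split $\Psi_h^n(\mu)-\widetilde{\Psi_H}^n(\mu) = (\Psi_h^n(\mu)-\Psi(t^n)(\mu)) + (\Psi(t^n)(\mu)-\widetilde{\Psi_H}^n(\mu))$ and bound each in $L^2$: the first again by Theorem \ref{corollaryPsii} ($\lesssim h^2 + \Delta t_F$, absorbed into the $h$ and $\Delta t_F$ terms since $h\lesssim h^2$ is false but $h^2\lesssim h$ — actually $h^2$ is even smaller, so it is harmless), the second by Corollary \ref{corollaryCorollaryPsiiCrankNicolson} ($\lesssim H^2 + \Delta t_G^2$). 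Then the $H^1$-norm of the projection of this $L^2$-small quantity onto $X_h^N$ is controlled by the inverse-type inequality $|v|_{H^1(\Omega)} \leq \sqrt{\lambda_N}\,\|v\|_{L^2(\Omega)}$ for $v\in X_h^N$ (this is exactly \eqref{orthohuhu}, which makes the $\zeta_i^h$ simultaneously $L^2$-orthonormal and $H^1$-orthogonal with eigenvalues $\lambda_i$); this produces the $N$-dependent constants $C_2(\mu,N)$ and $C_4(\mu,N)$, which grow with $N$ through $\sqrt{\lambda_N}$, exactly as in the state case. Collecting terms gives \eqref{estest2}.

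The main obstacle — and the only genuinely new piece relative to \cite{paraboliccontext} — is making sure the $\mu$-dependent constants are tracked correctly: because the sensitivity source term in \eqref{eq:directheatsensitivity}–\eqref{eq:directheatsensitivityCN} carries a factor $\frac{\partial A}{\partial\mu}=I_d$ and the bilinear form carries $\mu$, the constants $C(\mu)$ from Theorems \ref{corollaryPsii}–\ref{corollaryPsiiCrankNicolson} contain negative powers of $\mu$ (from the coercivity constant $\mu$ and factors $1/\mu$, $1/\mu^2$ visible in \eqref{gradtheta} and \eqref{T2T3''}); these must be carried through to show $C_1,\dots,C_4$ in \eqref{estest2} depend on $\mu$ but remain independent of all grid parameters. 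A secondary point to handle carefully is that the second term in the third-term split involves $\Psi(t^n)-\widetilde{\Psi_H}^n$ with $\widetilde{\Psi_H}^n$ built by the degree-two-in-time interpolation $I^2_n$ of the Crank–Nicolson coarse sensitivities: one needs Corollary \ref{corollaryCorollaryPsiiCrankNicolson} precisely because it already bundles the interpolation error into the $O(H^2+\Delta t_G^2)$ bound, so no extra interpolation estimate is required — but the regularity hypotheses of that corollary ($u\in H^2(0,T;H^2)\cap H^3(0,T;L^2)$) are stronger than those needed for the first term, so I would state the theorem under the stronger hypothesis and remark that it is the one inherited from the coarse-grid analysis. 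Everything else is the routine assembly already carried out for the state equation in \cite{paraboliccontext}.
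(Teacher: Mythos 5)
Your proposal follows essentially the same route as the paper: the same triangle-inequality split into (fine FEM sensitivity error, bounded by Theorem \ref{corollaryPsiiH1disc}), (RB projection error $\varepsilon(N)$ as in \eqref{truerror}), and (projection of $\Psi_h^n-\widetilde{\Psi_H}^n$ onto the reduced space), with the last term handled exactly as in the paper via the $L^2$/$H^1$ orthogonality from \eqref{orthohuhu} giving the $\sqrt{\lambda_N}$ factor and the $L^2$ bound $\|\Psi_h^n-\widetilde{\Psi_H}^n\|_{L^2}\leq C(\mu)(H^2+\Delta t_G^2+h^2+\Delta t_F)$ obtained by combining Theorem \ref{corollaryPsii} with Corollary \ref{corollaryCorollaryPsiiCrankNicolson}. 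Your remarks on tracking the negative powers of $\mu$ and on the stronger regularity inherited from the coarse-grid (Crank--Nicolson) analysis are consistent with the paper's hypotheses, so the proposal is correct.
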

If $H$ is such as $H^2 \sim h$, $\Delta t_G^2\sim \Delta t_F$, and $C_2(\mu,N)$ and $C_4(\mu,N)$ not too large, it results in an error estimate in $\varepsilon(N) + \mathcal{O}(h + \Delta t_F)$.
Theorem \ref{EllipticEst} then states that we recover optimal error estimates in $L^{\infty}(0,T;H^1(\Omega))$ if $\varepsilon(N)$ is small enough. 
\end{paragraph}
We now go on with the proof of Theorem \ref{EllipticEst}.
\begin{proof}
  The NIRB approximation at time step $n=0,\dots,N_T$, for a new parameter $\mu \in \mathcal{G}$ is defined by \eqref{NIRBapproximation00}.
Thus, the triangle inequality gives
\begin{align}
\ \Big|\Psi(t^n)(\mu)-\Psi_{Hh}^{N,n}(\mu)\Big|_{H^1(\Omega)}&\leq \Big|\Psi(t^n)(\mu)- \Psi_{h}^n(\mu)\Big|_{H^1(\Omega)}+\Big|\Psi_h^n(\mu)-\Psi_{hh}^{N,n}(\mu)\Big|_{H^1(\Omega)}+ \Big|\Psi_{hh}^{N,n}(\mu)-\Psi_{Hh}^{N,n}(\mu)\Big|_{H^1(\Omega)} \nonumber \\
&=:T_1+T_2+T_3, 
\label{triangleinequalitytime}
\end{align}
where $\Psi_{hh}^{N_1,n}(\mu)=\overset{N_1}{\underset{i=1}{\sum}}  (\Psi_h^n(\mu),\zeta^h_i)\ \zeta^h_i$.\\
\begin{itemize}
    \item 
    The first term $T_1$ may be estimated using the inequality given by Theorem \ref{corollaryPsiiH1disc},
such that 
\begin{equation}
    \Big|\Psi(t^n)(\mu)-\Psi_h^n(\mu)\Big|_{H^1(\Omega)}\leq C(\mu)\ (h + \Delta t_F).
    \label{cea2}
\end{equation}

\item We then denote by $\mathcal{S}_h'=\{ \Psi_h^n(\mu,t),\mu~\in~\mathcal{G},\ n=0,\dots N_T \}$ the set of all the \sensfct. For our model problem, this manifold has a low complexity. It means that for an accuracy $\varepsilon=\varepsilon(N)$ related to the Kolmogorov $N$-width of the manifold $\mathcal{S}_h'$, for any $\mu \in \mathcal{G}$, and any $n \in 0,\dots,N_T$, $T_2$ is bounded by $\varepsilon$ which depends on the Kolmogorov $N$-width.
\begin{equation}
  T_2=\Big|\Psi_h^n(\mu)-\overset{N_1}{\underset{i=1}{\sum}}(\Psi_h^n(\mu),\zeta^h_i)\ \zeta^h_i\Big|_{H^1(\Omega)} \leq \varepsilon(N).\label{kolmoFEMnew}
  \end{equation}
\item Since $(\zeta_i^h)_{i=1,\dots,N_1}$ is a family of $L^2$ and $H^1$ orthogonalized RB functions (see \cite{paraboliccontext} for only $L^2$ orthonormalized RB functions)
    \begin{equation}
\label{term3ortho}
    \Big|\Psi_{hh}^{N,n}-\Psi_{Hh}^{N,n}\Big|_{H^1(\Omega)}^2 = \overset{N_1}{\underset{i=1}{\sum}} |(\Psi_h^n(\mu)-\widetilde{\Psi_H}^n(\mu),\zeta^{h}_{i})|^2 \big|\zeta^{h}_{i}\big|_{H^1(\Omega)}^2,
    \end{equation}
    where $\widetilde{\Psi_H}^n(\mu)$ is the quadratic interpolation of the coarse snapshots on time $t^n$,\ $\forall n= 0,\dots, N_T$, defined by \eqref{parabola2}.
From the RB orthonormalization in $L_2$, the equation \eqref{orthohuhu} yields
\begin{equation}
  \label{orthoplus}
    \big|\zeta^{h}_{i}\big|_{H^1}^2 =\lambda_i \norm{\zeta^{h}_{i}}_{L^2(\Omega)}^2
    =\lambda_i \leq \underset{i=1,\cdots,N}{\max \ }\lambda_i=\lambda_N,
\end{equation}
such that the equation  \eqref{term3ortho} leads to
\begin{equation}
  \label{ici}
     \Big|\Psi_{hh}^{N,n}-\Psi_{Hh}^{N,n}\Big|_{H^1(\Omega)}^2    \leq    C \lambda_N\norm{\Psi_h^n(\mu)-\widetilde{\Psi_H}^n(\mu)}_{L^2(\Omega)}^2.
\end{equation}

Now by definition of $\widetilde{\Psi_H}^n(\mu)$ and by Corollary \ref{corollaryCorollaryPsiiCrankNicolson} and Theorem \ref{corollaryPsii}, for $t^n \in I_m,$\\
\begin{equation}
  \label{Aubinlike}
  \norm{\Psi_h^n(\mu)-\widetilde{\Psi_H}^n(\mu)}_{L^2(\Omega)} \leq  C(\mu)( H^2+ \Delta t_G^2+h^2 + \Delta t_F ), 
\end{equation}
and  we end up for equation \eqref{ici} with
\begin{equation}
    \Big|\Psi_{hh}^{N,n}-\Psi_{Hh}^{N,n}\Big|_{H^1(\Omega)} \leq C(\mu)\sqrt{\lambda_N} (H^2 +  \Delta t_G^2 + h^2 + \Delta t_F),
    \label{T3termParaboo}
\end{equation}
where $C(\mu)$ does not depend on $N$. Combining these estimates \eqref{cea2}, \eqref{kolmoFEMnew} and \eqref{T3termParaboo} concludes the proof and yields the estimate \eqref{estest2}.
\end{itemize}
\end{proof}

\section{New NIRB algorithms applied to sensitivity analysis}
\label{NIRBalgoS}
In this section, we propose several adaptations of the NIRB two-grid algorithm to the context of sensitivity analysis with the direct and adjoint formulations. The differentiability of the state $u$ with respect to parameters $\mu\in \mathcal{G}$ is assumed in what follows.
\subsection{NIRB two-grid GP algorithm for the direct problem.}
\label{NIRBalgosensdirect}

In this section, we propose a NIRB two-grid algorithm with a new post-treatment which reduces the online computational time.

\label{rectificationplus}
The main drawback of the algorithm described in the previous section is that it requires $1+P$ coarse systems in the online part (see {}{S}ection \ref{NIRBParabolic} in the context of state parabolic equations). The online portion of the new algorithm described below  requires the resolution of one coarse problem only, regardless of the number of parameters of interest. The idea is to construct a learning post-treatment between the coarse and the fine coefficients as in \cite{grosjean:hal-03588508} with different RB but here through a Gaussian Process  Regression (GPR) \cite{GUO2018807,nguyen2016gaussian} instead of a deterministic procedure. We adapt the Gaussian Processes (GP) employed in \cite{guo2019data} by using the coarse coefficients as inputs of our GP instead of the time steps and parameters. We have tested different kernels for the covariance function and we consider time-independent RB functions. This new non-intrusive algorithm is simple to implement and it may yield very good performance in both accuracy and efficiency for time-dependent problems applied to sensitivity analysis, as shown in {}{S}ection \ref{results}.

We refer to the following offline/online decomposition:
\begin{itemize}
\item \textbf{``Offline part''}\\
  \begin{enumerate}
  \item From a parameter training set $\mathcal{G}_{T,train}:=\mathcal{G}_{train} \cup \mathcal{G}_{p,train}$ (for $p=~1,\dots,P$), we seek the RB parameters in $ \mathcal{G}_T:= \mathcal{G}_N \cup \mathcal{G}_p$ for the state and sensitivity solutions through a greedy algorithm, as in {}{A}lgorithm \ref{algogreedy}, that allows us to generate the modes of the state and sensitivity reduce spaces, $X_h^{N_{\mu},T}$ and $X_{p,h}^{N_{\mu,T}} $ respectively.
We denote $N_{\mu,T}$ the cardinality of $\mathcal{G}_{T}$.     
At the end of this part, we obtain $N_{\mu,T}$ RB functions for the problem solutions denoted $(\Phi^h_i)_{i=1,\dots,N_{\mu,T}}$ and $N_{\mu,T}$ RB functions for the sensitivities, denoted $(\zeta^{h}_{p,i})_{i=1,\dots,N_{\mu,T}}$ for each $p=1,\dots,P$. 
\item \textbf{Gaussian Process  Regression (GPR).}\begin{paragraph}{} We then use the fact that the sensitivities are directly derived from the initial solutions, and thus, we consider a Gaussian metamodel to recover the fine coefficients of the sensitivities \eqref{Bjj} from the coarse coefficients of the problem solutions \eqref{Ajj}. 
 In addition to the fine sensitivity coefficients already computed for the RB generation, we compute the coarse coefficients for the problem solutions. Thus, we define
 \begin{align}
      &   \forall \widetilde{\mu}_k \in  \mathcal{G}_{T},\ \forall i=1,\cdots,N_{\mu,T}, \ \forall n=0,\cdots,N_{T} , \nonumber \\
      & \quad \quad \quad  A_{i}^{n}(\widetilde{\mu}_k)=\int_{\Omega} \widetilde{u_{H}}^n(\widetilde{\mu}_k) \cdot \Phi_{i}^h\ \textrm{d}\xx,\label{Ajj} \\
      & \quad  \quad \quad B_{i}^{p,n}(\widetilde{\mu}_k)=\int_{\Omega} \Psi_{p,h}^n(\widetilde{\mu}_k) \cdot \zeta_{p,i}^h\ \textrm{d}\xx,   \label{Bjj}
\end{align} 
and use them as a set of inputs-outputs for our GPR model,\  \begin{equation}
\label{DDeq}
    \mathcal{D}=\{(\mathbf{A}_k,\mathbf{B}_k): \ k=1,\dots, N_{T,train}\},
\end{equation}
with $\mathbf{A}_k=(A_{i}^n(\widetilde{\mu}_k))_{n=0,\dots,N_T, \ i=1,\dots,N_{\mu,T}}$ and  $\mathbf{B}_k=(B_{i}^n(\widetilde{\mu}_k))_{n=0,\dots,N_T, \ i=1,\dots,N_{\mu,T}}$ for \ $k=1,\dots, N_{T,train}$. We denote $\mathbf{A}=\{\mathbf{A}_1,\dots,\mathbf{A}_{N_{T,train}}\}$ the training inputs and by $\mathbf{B}=\{\mathbf{B}_1,\dots,\mathbf{B}_{N_{T,train}}\}$ the associated outputs.\\

Here, we deal with $3$ kinds of indices:
\begin{itemize}
    \item the number of modes: $N_{\mu,T}$,
    \item the cardinality of the training set: $N_{T,train}$,
    \item and the number of time steps which is $N_T+1.$
\end{itemize}The observed input-output pairs in $\mathcal{D}$ are assumed to follow some unknown regression function 
\begin{equation*}
f : \mathbb{R}^{N_{\mu,T}\times (N_T+1)} \to~\mathbb{R}^{N_{\mu,T}\times (N_T+1)}, \ f(\mathbf{A}_k) = \mathbf{B}_k, \ k=1,\dots,N_{T,train}.
\end{equation*} 
From a Bayesian perspective, we can define a prior and posterior GP on the regression: 
\begin{itemize}
\item the prior GP reflects our beliefs about the metamodel before seeing any training data, and is solely defined by a mean and a covariance function;
\item the posterior GP conditions the prior on the training data, i.e. includes the knowledge from the data $\mathcal{D}$. 
\end{itemize}

{}{The prior GP is defined by a mean and a covariance function such that
\begin{equation*}
\textrm{for } (\xx,\xx') \in \mathbb{R}^{N_{\mu,T}\times (N_T+1)} \times  \mathbb{R}^{N_{\mu,T}\times (N_T+1)}, f(\xx) \sim GP(0, \kappa(\xx,\xx')), \mathbf{y}=f(\xx), 
\end{equation*}
}
where the covariance function $\kappa$ is a positive definite kernel (see \cite{duvenaud2014automatic} for an overview on different kernels). There are many different options for the covariance functions. {}{For our model }problem, we use the standard squared exponential covariance function ({}{which is a special instance of} radial basis functions) 
\begin{equation}
\label{kappa}
    \textrm{for } (\xx,\xx')\in \mathbb{R}^{N_{\mu,T}\times (N_T+1)} \times \mathbb{R}^{N_{\mu,T}\times (N_T+1)}, \ \kappa(\mathbf{x},\mathbf{x}') = \sigma_f^2 \ \textrm{exp} \big(- \frac{1}{2l^2} \norm{\xx - \xx'}\big),
\end{equation}
with $\norm{\cdot}$ being the usual  Euclidean norm and where two hyperparameters are employed: the standard deviation parameter $ \sigma_f$ and the correlated length-scale $l$. We assume that the prior mean function is zero which is common practice and does not limit the GP model. \\


The goal of GPR is then to use the training data $\mathcal{D}$ \eqref{DDeq} to make predictions when given new inputs.\\
In our case, those new points (where we predict the output) will be the coarse coefficients given by \eqref{Ajj} for a new parameter $\mu \in \mathcal{G}$. \\
To incorporate our knowledge from the training data, we condition the prior GP on our set of training data points $\mathcal{D}$ \eqref{DDeq}. The posterior distribution of the output $f^{*}$ for a new input $\mathbf{A}^{*} \in \mathbb{R}^{N_{\mu,T}\times (N_T+1)}$ is then given by \cite{wirthl2022global}
\begin{align}
\label{GPReq}
   & f^*|\mathbf{A}^{*},\mathbf{A}, \mathbf{B} \sim \mathcal{N}(m^*(\mathbf{A}^{*}),C^*(\mathbf{A}^{*},\mathbf{A}^{*})),\notag \\
   & \textrm{ with } m^*(\textbf{A}^{*})=K^{*T} K_y^{-1}\mathbf{B}, \notag \\
   & \textrm{ and } C^*(\mathbf{A}^{*},\mathbf{A}^{*})= K^{**}-K^{*T} K_y^{-1} K^*,
\end{align}
where $K_y= \kappa(\mathbf{A},\mathbf{A})+ \sigma_y^2 \mathbf{I}_{N_{T,train}}$ ($\mathbf{I}_{N_{T,train}}$ being the $N_{T,train}$-dimensional unit matrix), $K^*=\kappa(\mathbf{A}^*,\mathbf{A})$ and $K^{**}=\kappa(\mathbf{A}^*,\mathbf{A}^*)$.

The hyperparameters $\theta = (\sigma_f, l)$ in \eqref{kappa} are optimised by maximising the log marginal likelihood using a gradient-based optimiser. The log marginal likelihood is given by
\begin{equation*}
    \theta_{opt} \in \underset{\theta}{\arg \max} \log p(\mathbf{B}|\mathbf{A}) = \underset{\theta}{\arg \max} \{ -\frac{1}{2} \mathbf{B}^T K_y^{-1} (\theta) \mathbf{B} - \frac{1}{2} \log|K_y(\theta)| - \frac{N_{T,train}}{2} \log (2\pi) \},
\end{equation*}
where $p(\mathbf{B}|\mathbf{A})$ is the conditional density function of $\mathbf{B}$ given $\mathbf{A}$, also considered as the log marginal likelihood. Note that computing the inverse of $K_y$ may be expensive computationally, i.e. on the order $\mathcal{O}(N_{T,train}^3)$, increasing the offline computational cost when the number of training samples increases. Yet, we get rid of the time complexity since we employ a "global" GP on time and the covariance function defined by \eqref{kappa}.

\end{paragraph}
 \end{enumerate}
\item \textbf{``Online step''}\\
  \begin{enumerate}
    \setcounter{enumi}{0}
  \item We solve the problem \eqref{heatEq2} on the coarse mesh $\mathcal{T}_H$ for a new parameter $\mu \in \mathcal{G}$ at each time step $m=~0,\dots, M_T$ using \eqref{varpara2discCN}. 
  \item   We interpolate quadratically in time the coarse solution $u_{H}^m$ on the fine time grid with \eqref{parabola}.
    \item  Then, we linearly interpolate $\widetilde{u_{H}}^n(\mu)$ on the fine mesh in order to compute the $L^2$-inner product with the basis functions. We derive the new coarse coefficients with 
    \begin{equation}
     A^{n}_j(\mu)  = (\widetilde{u_{H}}^n(\mu),\Phi^{h}_{j}) \textrm{ for } j=1,\dots, N_{\mu,T} \textrm{ and } n=0,\dots, N_T.
    \end{equation}
    We denote $\mathbf{A}^*(\mu)$ these coefficients. Then, the new sensitivity fine coefficients $\mathbf{B}^{GP}$ are approximated following \eqref{GPReq}, thus
     \begin{equation*}
     \mathbf{B}^{GP} (\mathbf{A}^*(\mu)) \sim GP(m^*(\mathbf{A}^*(\mu)),C^*(\mathbf{A}^*(\mu),\mathbf{A}^*(\mu))),
    \end{equation*}
    with $m^*$ and $C^*$ defined by \eqref{GPReq}.
    Finally, the new NIRB approximation is obtained by
         \begin{equation}
        \label{NIRBapproximation0rect00}
      \Psi_{p,Hh}^{GP,n}(\mu):= \overset{N_{\mu,T}}{\underset{i=1}{\sum}} \   [\mathbf{B}^{GP}(\mathbf{A}^*(\mu))]^n_i \ \zeta^{h}_{p,i}.
         \end{equation}
\end{enumerate}
 \end{itemize}
 Now, we proceed with the NIRB two-grid algorithm adapted to the adjoint formulation.
\subsection{On the adjoint formulation.}
\label{NIRBalgosensadjoint}
The adjoint formulation requires some modifications of the NIRB two-grid algorithm presented in {}{S}ection \ref{NIRBParabolic}. Since in \eqref{Dmup1}, for all $n\in \{0,\dots,N_T\},$ the fine solution $u_h^n(\mu)$ is required to obtain the sensitivities on $\mathcal{F}$, it follows that here we have to compute two reductions: one for the initial solution $u$ and one for the adjoint $\chi$. 
So let $u(\mu)$ be the exact solution of problem \eqref{heatEq2} for a parameter  $\mu \in \mathcal{G}$ and $\chi(\mu)$ its adjoint given by \eqref{eq:chicontinuous}.
In this setting, we use the following offline/online decomposition for the NIRB procedure:
\begin{itemize}
\item \textbf{``Offline part''}\\
  \begin{enumerate}
  \item During the offline stage, we first construct the reduced space $X_h^N$ and the RB function $(\Phi_1^h,\dots,\Phi_N^h)$ with the steps 1-2 of {}{S}ection \ref{NIRBParabolic}.
\item  Then, we use the same steps 1-2 but with the adjoint problem on the fine mesh, as  \eqref{adjointdisceulerweak} in case of the heat equation. We denote by $X_{1}^{N_1}$ the reduced space. \\
Thus, for a set of training parameters $(\widetilde{\mu}_i)_{i=1,\cdots,N_{1,train}},$ we define $\mathcal{G}_{1,train}= \underset{i \in \{1,\dots,N_{1,train}\}}{\cup} \widetilde{\mu}_i$. Then, through a greedy procedure  \ref{algogreedy}, we adequately choose the parameters of the RB. During this procedure, we compute fine fully-discretized solutions $\{\chi_{h}^n(\widetilde{\mu}_i)\}_{i \in \{1,\dots N_{\mu,1}\},\ n=\{0,\dots,N_T\}}$ ($N_{\mu,1}\leq N_{1,train}$) with the HF solver (e.g. by solving either \eqref{adjointdisceulerweak} or the associated problem where $u_h^n$ is replaced by its NIRB rectified version $\mathbf{R}^n_u[u_{Hh}^{N,n}]$ obtained from the algorithm of {}{S}ection \ref{NIRBParabolic}).
   In analogy with {}{S}ection \ref{NIRBParabolic}, a few time steps may be selected for each parameter of the RB, and thus, we obtain $N_1$ $L^2$ orthogonal RB (time-independent) functions, denoted $(\xi^{h}_{i})_{i=1,\dots,N_1}$, and the reduced space $X_{h}^{N_1}:= \textrm{Span}\{\xi_{1}^h,\dots,\xi_{N_1}^h\}$.
 \item Then, we solve the eigenvalue problem \eqref{orthohuhu} on $X_{h}^{N_1}$:
    \begin{numcases}
      \strut \textrm{Find } \xi^{h} \in X_{h}^{N_1}, \textrm{ and } \lambda \in \mathbb{R} \textrm{ such that: }    \nonumber\\
        \forall \vv \in X_{h}^{N_1}, \int_{\Omega} \nabla  \xi^{h} \cdot \nabla \vv \ d\xx= \lambda \int_{\Omega}  \xi^{h} \cdot \vv \ d\xx. \label{orthohuhup}
    \end{numcases} 
  We get an increasing sequence of eigenvalues $\lambda_i$, and eigenfunctions $(\xi^h_{i})_{i=1,\cdots,N_1}$, orthonormalized in $L^2(\Omega)$ and orthogonalized in $H^1(\Omega)$. 
\item As in the offline step 3 from {}{S}ection \ref{NIRBParabolic},  the NIRB approximation is enhanced with a rectification post-processing. Thus, we introduce a rectification matrix, denoted $\mathbf{R}_{\chi}^{n}$ for each fine time step $n \in \{0,\dots, N_T\}$, such that 
    \begin{equation}
              \mathbf{R}_{\chi,i}^{n}=((\mathbf{A}^{n})^T\mathbf{A}^{n}+\delta \mathbf{I}_{N_1})^{-1}(\mathbf{A}^{n})^T \mathbf{B}_{i}^{n}, \quad  i=1, \cdots,N_1,
              \label{rectiff5}
    \end{equation}
    where
    \begin{align}
      & \forall i=1,\cdots,N_1,\quad  \textrm{ and }  \quad \forall \widetilde{\mu}_k \in  \mathcal{G}_{p},\nonumber \\
      & \quad  A_{k,i}^{n}=\int_{\Omega} \widetilde{\chi_{H}}^n(\widetilde{\mu}_k) \cdot \xi_{i}^h\ \textrm{d}\xx, \label{Ajjj} \\
      & \quad B_{k,i}^{n}=\int_{\Omega} \chi_{h}^n(\widetilde{\mu}_k) \cdot \xi_{i}^h\ \textrm{d}\xx,     \label{Bjjj}
\end{align}
 and where $\mathbf{I}_{N_1}$ refers to the identity matrix and $\delta_p$ is a regularization term required for the inversion of $(\mathbf{A}^{n})^T\mathbf{A}^{n}$ (note that we used \eqref{parabola2} for $\widetilde{\chi_{H}}^n(\widetilde{\mu}_k)$).
 \end{enumerate}
\item \textbf{``Online part''}\\
  \begin{enumerate}
    \setcounter{enumi}{0}
  \item We first solve the forward problem (e.g. \eqref{heatEq2}) on the coarse mesh $\mathcal{T}_H$ for a new parameter $\mu \in \mathcal{G}$ at each time step $m=0,\dots, M_T$ using a second order time scheme for the discretization in time (e.g. \eqref{varpara2discCN}). 
   \item Then, we solve the coarse associated adjoint problem (e.g. \eqref{adjointdisceulerweakCN}) with the same parameter $\mu$,  at each time step $m=0,\dots, M_T$.
  \item   We interpolate quadratically in time the coarse solution $\chi_{H}^m$ on the fine time grid with \eqref{parabola2}.
    \item  Then, we linearly interpolate $\widetilde{\chi_{H}}^n(\mu)$ on the fine mesh in order to compute the $L^2$-inner product with the basis functions. The approximation used for the adjoint in the two-grid method is
      \begin{equation}
        \label{NIRBapproximation00ad}
     \textrm{For }n=0,\dots, N_T, \quad   \chi_{Hh}^{N_1,n}(\mu):= \overset{N_1}{\underset{i=1}{\sum}}(\widetilde{\chi_{H}}^n(\mu),\xi^{h}_{i})\ \xi^{h}_{i},
      \end{equation}
      and with the rectification post-treatment step, it becomes
         \begin{equation}
        \label{NIRBapproximation0rect0ad}
       \mathbf{R}^{n}_{\chi}[\chi_{Hh}^{N_1}](\mu):= \overset{N_1}{\underset{i,j=1}{\sum}}R_{\chi,ij}^{n}\ (\widetilde{\chi_{H}}^n(\mu),\xi^{h}_{j})\ \xi^{h}_{i},
         \end{equation}
         where $\mathbf{R}^{n}_{\chi}$ is the rectification matrix at time $t^n$, given by \eqref{rectiff5}.
         \item Then, we use the steps 5 and 6 of {}{S}ection \ref{NIRBParabolic} in order to obtain a NIRB approximation for $u(\mu)$ from the coarse solution $u_H^m$ given by step 4 of this online part. 
         \item Finally, the sensitivities NIRB approximations of $\mathcal{F}$ are given by 
              \begin{equation}
        \label{NIRBapproximation00sensF}
    \textrm{for }p=1,\dots,P,\  [\frac{\pt \mathcal{F}}{\pt \mu_p}]_{Hh}^{N_1}(\mu):= \overset{t^n}{\underset{j=1}{\sum}} \Delta t_F\ \Big(\chi_{Hh}^{N_1,j}, \nabla \cdot ( \frac{\pt A}{\pt \mu_p} (\mu)\nabla u_{Hh}^{N,j} )\big), \textrm{ from \eqref{Dmup1}},
      \end{equation}
      and with the rectification post-treatment step, it becomes
         \begin{equation}
        \label{NIRBapproximation0rect0sensF}
  \textrm{for }p=1,\dots,P,  \ \mathbf{R}_{\chi}[  [\frac{\pt \mathcal{F}}{\pt \mu_p}]_{Hh}^{N_1}](\mu):= \overset{t^n}{\underset{j=1}{\sum}}  \Delta t_F\ \Big(\mathbf{R}^j_{\chi}[\chi_{Hh}^{N_1,j}](\mu), \nabla \cdot (\frac{\pt A}{\pt \mu_p}(\mu)\nabla \mathbf{R}^j_u[u_{Hh}^{N,j}](\mu)) \big).
         \end{equation}
\end{enumerate}
 \end{itemize}
The next section gives our main result on the NIRB two-grid method error estimate in the context of sensitivity analysis.

\section{Numerical results.}
\label{results}
 In this section, we {}{apply} the NIRB algorithms on several numerical tests.
  We have implemented the problem model and the Brusselator system (as our two NIRB applications) using FreeFem++ (version 4.9) \cite{bamg2} to compute the fine and coarse snapshots, and the solutions have been stored in VTK format.
  
Then we have applied the NIRB algorithms with python (with the library scikit-learn \cite{kramer2016scikit} for the GP version), in order to highlight the non-intrusive side of the two-grid method (as in \cite{paraboliccontext}). After saving the NIRB approximations with Paraview module in Python, the errors have been computed with FreeFem++. {}{The codes are availbale on GitHub\footnote{https://github.com/grosjean1/SensitivityAnalysisWithNIRBTwoGridMethod}}.
\subsection{On the heat equation.}
In order to validate our numerical results, we have chosen a right-hand side function
    such that for $\mu = 1$, we can calculate an analytical solution for $u$, which is given by
\begin{equation}
    u(t, \xx; 1) = 10 (t+1) x^2(1 - x)^2y^2(1 - y)^2, \forall t \in [0,1],
    \end{equation}
where $ \xx = (x, y) \in [0,1]^2$.
We have solved \eqref{heatEq2} and \eqref{varparasensheat} on the parameter set $\mathcal{G} = [0.5, 9.5]$. The initial solution $u_0(\mu)$ solves the elliptic equation (with homogeneous Dirichlet boundary conditions)
\begin{equation*}
    - \mu \Delta u_0=f_0, \textrm{ with }f_0(\xx)=- 20\ [(6x^2 - 6x + 1)(y^2(y - 1)^2) + (6y^2 - 6y + 1)(x^2(x - 1)^2)].
    \end{equation*}
We have retrieved several snapshots on $t = [0, 1]$ (note that the coarse time grid must belong to the interval of the fine one), and tried our algorithms on several size of meshes, always with $\Delta t_F\simeq h$ and $\Delta t_G \simeq H$ (both schemes are stable), and such that $h=H^2$. 
\begin{itemize}
\item \figref{fig:fig1} illustrates the convergence rates in $\ell^{\infty}(0,\dots, N_T;H_0^1(\Omega)$ of the FEM and NIRB {}{sensitivity} approximations. We have taken 18 parameters in $\mathcal{G}$ for the RB construction such that $\mu_i = 0.5i, \ i = 1, \dots , 19, i \neq 2 $ and a reference solution to problem \eqref{eq:directheatsensitivity}, with $\mu = 1$ and its mesh and time step such that $h_{ref}\simeq \Delta t_{F,ref}=0.0025$. We have compared the FEM errors with coarse and fine grids to the ones obtained with the NIRB algorithms (classical NIRB, NIRB with rectification and NIRB with GPR, {}{all described in Section} \ref{NIRBalgosensdirect}).
\begin{figure}
    \centering
    \includegraphics[scale=0.3]{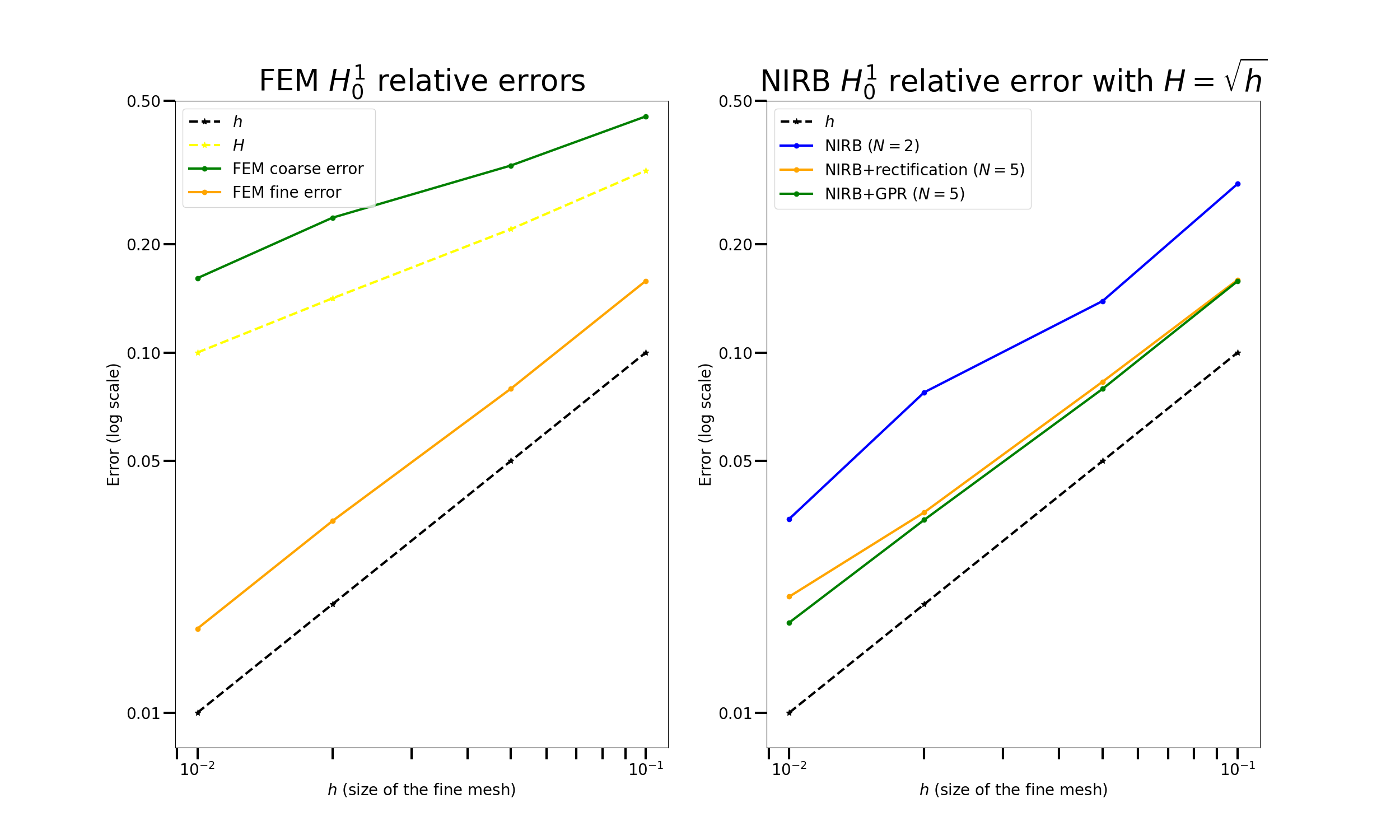}
    \caption{{}{Direct sensitivities -} heat equation: $H^1_0$ relative FEM errors (left) compared to $H^1_0$ relative NIRB errors (right) with several $\Delta t_F \simeq h \simeq H^2 \simeq \Delta t_G $ and $\mu=1$ }
    \label{fig:fig1}
\end{figure}

\item  Then, we have taken 19 parameters in $\mathcal{G}$ for the RB construction such that $\mu_i=0.5i, \ i=1,\dots, 19$ and have applied the ``leave-one-out'' strategy in order to evaluate the NIRB algorithm with respect to the parameters (whereas \figref{fig:fig1} illustrates results with $\mu=1$). Table \ref{TableParam1} presents the maximum relative $H^1_0$-errors between the {}{true} solution $\Psi_{p,h}(\mu)$ and the NIRB approximations over the parameters (inside the parameters training set). The plain NIRB error is defined by 
 \begin{equation}
 \label{plainNIRBeq}
\underset{\mu \in \mathcal{G}_{train}}{\max} \frac{\norm{\Psi_{p,h_{ref}}(\mu) - \Psi_{p,Hh}^{N_p}(\mu)}_{\ell^{\infty}(0,\dots,N_T;H^1_0(\Omega))}}{\norm{\Psi_{p,h_{ref}}(\mu)}_{\ell^{\infty}(0,\dots,N_T;H^1_0(\Omega))}},
 \end{equation}
 and for the NIRB rectified error $\Psi_{p,Hh}^{N_p}(\mu)$ is replaced by $\mathbf{R}_{\Psi}[\Psi_{p,Hh}^{N_p}](\mu)$ in \eqref{plainNIRBeq}, and by $\Psi_{p,Hh}^{GP}(\mu)$ for the GP version (respectively defined by \eqref{NIRBapproximation0rect0} and \eqref{NIRBapproximation0rect00}. 
 We compare these errors to the ones obtained with the true projection on $X_{p,h}^{N_p}$, denoted by $ \Psi_{p,hh}^{N_p,n}(\mu)$ and given by 
      \begin{equation}
      \label{errorHFprojection}
     \quad   \Psi_{p,hh}^{N_p,n}(\mu):= \overset{N_p}{\underset{i=1}{\sum}}(\Psi_{p,h}^n(\mu),\zeta^{h}_{p,i})\ \zeta^{h}_{p,i}, \quad  \textrm{for }n=0,\dots, N_T.
      \end{equation}

 \begin{table}[tbhp]
     {\footnotesize
       \caption{ {}{Direct formulation: Maximum relative $\ell^{\infty}(0,\dots,N_T;H^1_0(\Omega)$ error over the parameters} (compared to the true projection and to the FEM coarse projection) with $N=5$ with several $h\simeq \Delta t \simeq H^2\simeq \Delta t_G^2$}\label{TableParam1}
   \begin{center}
 {   \begin{tabular}{ |c|c|c|c| c| c|}
      \hline
$\Delta t_F$-$\Delta t_G$&  Plain NIRB & NIRB rectified & NIRB-GP  & $ \underset{\mu \in \mathcal{G}_{train}}{\max} \frac{\norm{\Psi_{p,h_{ref}}(\mu) - \Psi_{p,hh}^{N_p}(\mu)}_{\ell^{\infty}(0,\dots,N_T;H^1_0(\Omega))}}{\norm{\Psi_{p,h_{ref}}(\mu)}_{\ell^{\infty}(0,\dots,N_T;H^1_0(\Omega))}}$ &   $\underset{\mu \in \mathcal{G}_{train}}{\max} \frac{\norm{\Psi_{p,h_{ref}}(\mu) - \Psi_{p,H}(\mu)}_{\ell^{\infty}(0,\dots,M_T;H^1_0(\Omega))}}{\norm{\Psi_{p,h_{ref}}(\mu)}_{\ell^{\infty}(0,\dots,M_T;H^1_0(\Omega))}}$ \\
    \hline
  0.01-0.1&$3.79\times 10^{-2}$&$1.71\times 10^{-2}$  & $ 1.71 \times 10^{-2}$ & $1.69\times 10^{-2}$&$1.56\times 10^{-1}$ \\
  \hline
  0.02-0.1414&$8.24\times 10^{-2}$ &$3.49\times 10^{-2}$ & $3.39 \times 10^{-2}$ &$3.38\times 10^{-2}$  &   $2.32\times 10^{-1}$\\
  \hline
 0.05-0.22 &$1.38\times 10^{-1}$ & $7.89\times 10^{-2}$& $7.89\times 10^{-2}$ & $7.89\times 10^{-2}$&   $3.24\times 10^{-1}$\\
 \hline
    0.1-0.32& $2.99\times 10^{-1}$&$1.58 \times 10^{-1}$& $1.57 \times 10^{-1}$ &  $1.57\times 10^{-1}$ &  $4.38 \times 10^{-1}$\\
    \hline
    \end{tabular}}
   \end{center}
   }
 \end{table}
 
 \item Finally, to demonstrate the great capability of the NIRB two-grid method to approach a solution of the adjoint sensitivity equations, we have tested the backward approach on several parameters (included in the training set $\mathcal{G}$) with a leave-one-out strategy and we present the results in Table \ref{TableParamAdj1} and \ref{TableParamAdj1bis}, with and without noisy measurements for $\overline{u}$ (where for the noisy version, we have added a Gaussian noise centered in $0$ and of standard deviation $\sigma=0.1$). We see that, with the NIRB rectified algorithm, we retrieve the accuracy of the fine adjoint solutions in both cases.

  \begin{table}[tbhp]
     {\footnotesize
       \caption{ {}{{}{Adjoint formulation without noise:} Maximum absolute $\ell^{\infty}(0,\dots,N_T;H^1_0(\Omega))$ error over the parameters} (compared to the true projection and to the FEM fine and coarse projections) with $N=15$ with several $h\simeq \Delta t \simeq H^2\simeq \Delta t_G^2$} (with $h_{ref} \simeq \Delta t_{F,ref}=0.0025$ for the measurements $\overline{u}$ and with a reference solution for the adjoint $\chi_{ref}$ where $h_{ref} \simeq \Delta t_{F,ref}=0.005$) \label{TableParamAdj1}
   \begin{center}
   { \begin{tabular}{ |c|c|c|c|c|}
      \hline
$\Delta t_F$-$\Delta t_G$& NIRB rectified error & maximum absolute fine error &  maximum absolute true projection error   & maximum absolute coarse error \\
    \hline
  0.01-0.1& $4.9 \times 10 ^{-6}$ &  $7.0 \times 10 ^{-6}$&  $7.0 \times 10 ^{-6}$ &  $3.3\times 10^{-4}$\\
  \hline
  0.02-0.1414& $9.1 \times 10^{-6}$ & $8.1 \times 10^{-6}$& $8.1 \times 10 ^{-6}$ &$7.0\times 10^{-4}$\\
  \hline
 0.05-0.22 &$1.2 \times 10^{-4}$ & $1.0 \times 10^{-4}$&  $1.0 \times 10 ^{-4}$&$1.3\times 10^{-3}$\\
    \hline
    0.1-0.32&$ 2.2 \times 10^{-4}$&  $1.7 \times 10^{-4}$& $1.7 \times 10 ^{-4}$& $2.3\times 10^{-3}$ \\
    \hline
    \end{tabular}}
   \end{center}
   }
 \end{table}

   \begin{table}[tbhp]
     {\footnotesize
       \caption{ {}{{}{Adjoint formulation with noise:} Maximum absolute $\ell^{\infty}(0,\dots,N_T;H^1_0(\Omega))$ error over the parameters} (compared to the true projection and to the FEM coarse projection) with $N=15$ with several $h\simeq \Delta t \simeq H^2\simeq \Delta t_G^2$ and noisy measurements} (with $h_{ref} \simeq \Delta t_{F,ref}=0.0025$ for the measurements $\overline{u}$ and with a reference solution for the adjoint without noise $\chi_{ref}$ where $h_{ref} \simeq \Delta t_{F,ref}=0.005$) 
       \label{TableParamAdj1bis}
   \begin{center}
   { \begin{tabular}{ |c|c|c|c|c|}
      \hline
$\Delta t_F$-$\Delta t_G$& NIRB rectified error & maximum absolute fine error &  maximum absolute true projection error   & maximum absolute coarse error \\
    \hline
  0.01-0.1&$6.7 \times 10 ^{-4}$  & $6.6 \times 10 ^{-4}$ & $6.7 \times 10 ^{-4}$   & $4.0\times 10^{-3}$\\
  \hline
  0.02-0.1414&$1.4\times 10^{-3}$& $1.3 \times 10^{-3}$ & $1.3 \times 10^{-3}$ &$6.0\times 10^{-3}$\\
  \hline
 0.05-0.22 &$6.3 \times 10^{-3}$&$6.3 \times 10^{-3}$ &  $6.3 \times 10^{-3}$&$8.1\times 10^{-3}$\\
    \hline
    0.1-0.32&$ 3.2 \times 10^{-3}$&  $3.0 \times 10^{-3}$& $3.0 \times 10^{-3}$& $1.0\times 10^{-2}$ \\
    \hline
    \end{tabular}}
   \end{center}
   }
 \end{table}

\end{itemize}


\subsection{On the Brusselator system.}
{}{In this section, we shall consider the nonlinear rate equations of the trimolecular model or Brusselator. It is used for the study of cooperative processes in chemical kinetics \cite{prigogine1985self,MITTAL20115404}.}
It is a more complex test from a simulation point of view than the heat equation due to its nonlinearity. 
The chemical concentrations $u_1$ and $u_2$ are controlled by parameters $\boldsymbol{\mu}=(a,b,\alpha)$ throughout the reaction process, making it an interesting application of a NIRB method in the context of sensitivity analysis.\\
Let us consider as the spatial domain $\Omega=[0,1]^2$. The nonlinear system of this two-dimensional reaction-diffusion problem with $\mathbf{u}=(u_1,u_2)$ {}{is}:
\begin{equation*}
\begin{cases}
   & \pt_t u_1  = F_1(\mathbf{u}):=a+ u_1^2 u_2 -(b+1) u_1 + \alpha \Delta u_1,\textrm{ in }\Omega\times ]0,T] \\
    & \pt_t u_2  = F_2(\mathbf{u}):=bu_1 -u_1^2 \ u_2 +\alpha  \Delta u_2, \textrm{ in } \Omega \times ]0,T], \\
      &  u_1(\xx,0)=u^0(\xx)=2+0.25y,  \textrm{ in } \Omega \\
            & u_2(\xx,0)=v_0(\xx)=1 + 0.8x,  \textrm{ in } \Omega, \\
            & \pt_n u_1 =0 ,\ \pt \Omega,\\
  & \pt_n u_2 = 0,\  \pt \Omega.
\end{cases}
\end{equation*}
Our parameter, denoted $\boldsymbol{\mu}=(a,b,\alpha)$, belongs to {}{$\mathcal{G}:=[2,5]\times [1,5] \times [0.0001,0.05]$}. These parameters are standard {}{\cite{MITTAL20115404}}. We have taken a {}{final} time $T=4$. We note that for $b\leq 1 + a^2$ the solutions are stable, and for $\alpha$ small enough they converge to $(u_l,v_l)=(a,\frac{b}{a})$.\\
We use an {}{implicit} Euler scheme for fine solutions and fine sensitivities with the Newton algorithm to deal with {}{the} nonlinearity and {}{the} Crank-Nicolson scheme for the coarse mesh.\\
 \begin{itemize}
\item  {}{We have tested our NIRB algorithms on several parameters in $\mathcal{G}$ with a leave-one-out strategy, and 34 training parameters for the RB} (see \cite{maday2013locally} for a Greedy algorithm with adaptive choice of optimal training set, adapted to a target accuracy).
We have employed a refined mesh to represent the solution of reference (with $h_{ref}=\Delta t_{ref}=0.005$). In Tables \ref{tablebruss} and \ref{tablebruss2}, we have compared the $\ell^{\infty}(0,\dots,N_T;\ H^1(\Omega))$ error of the fine FEM solutions to the corresponding NIRB errors with the rectification post-treatment and with the GP process, with $40$ modes and $h=0.02 =~\Delta t_F $ and $ H = \Delta t_G=0.1$. {}{For the GP kernel, we have employed a \textit{Dot Product} kernel \cite{lu2005positive} which takes the form \begin{equation*}
    \kappa(\xx,\xx')=1+\xx\cdot \xx'. 
\end{equation*}
}
\small{
\begin{table}[h]
    \centering
{    \begin{tabular}{|c|c|c|c|c|c|}
  \hline
{}{   Parameters $a$-$b$-$\alpha$}&Fine error&Coarse error& True projection $u_{hh}^N$ & NIRB + rectification & NIRB+GPR  \\
\hline
\hline
{}{
3-2-0.01}& $5.6 \times 10^{-2}$ &   $2.8\times 10^{-1}$& $5.6 \times 10^{-2}$  &$5.6 \times 10^{-2}$  & $ 6.0 \times 10^{-2}$\\ 
\hline
{}{3-3-0.01}&$5.9 \times 10^{-2}$&  $2.8\times 10^{-1}$&$5.9 \times 10^{-2}$ &$5.9 \times 10^{-2}$ & $ 7.0\times 10^{-2}$ \\
\hline
{}{3-4-0.01 }& $9.2 \times 10^{-2}$& $3.8\times 10^{-1}$ & $9.2 \times 10^{-2}$& $9.2 \times 10^{-2}$& $1.5\times 10^{-1}$\\
\hline
{}{4-2-0.0005} &$7.3\times 10^{-2}$ & $3.3\times 10^{-1}$ & $7.3\times 10^{-2}$ &$7.3\times 10^{-2}$  &$6.0\times 10^{-2}$ \\
\hline
{}{4-3-0.0005}& $6.7 \times 10^{-2}$& $3.1\times 10^{-1}$ &$6.7 \times 10^{-2}$ &$6.7 \times 10^{-2}$ &$7.0\times 10^{-2}$ \\
\hline
{}{4-4-0.0005}&$8.4\times 10^{-2}$ & $3.7\times 10^{-1}$&$ 8.4\times 10^{-2}$&$8.4\times 10^{-2}$&$9.2 \times 10^{-2}$ \\
\hline
    \end{tabular}}
    \caption{ {}{relative $\ell^{\infty}(0,\dots,N_T;\ H^1(\Omega))$ errors (and  $\ell^{\infty}(0,\dots,M_T;\ H^1(\Omega))$ for the coarse ones) with leave-one-out strategy $N=40$ for the parameter $a$}} 
    \label{tablebruss}
\end{table}
}

\small{
\begin{table}[h]
    \centering
   { \begin{tabular}{|c|c|c|c|c|c|}
    \hline
{}{Parameters $a$-$b$-$\alpha$}&Fine error&Coarse error& True projection $u_{hh}^N$ & NIRB + rectification & NIRB+GPR  \\
\hline
\hline
{}{3-2-0.01}& $ 6.4\times 10^{-2}$ &   $3.1\times 10^{-1}$& $6.4\times 10^{-2}$  & $6.4\times 10^{-2}$ &$ 6.8\times 10^{-2}$ \\ 
\hline
{}{3-3-0.01}&$ 6.2\times 10^{-2}$&  $3.0\times 10^{-1}$&$ 6.2\times 10^{-2}$ & $ 6.2\times 10^{-2}$& $ 6.9\times 10^{-2}$ \\
\hline
{}{3-4-0.01} & $ 8.6\times 10^{-2}$& $3.9\times 10^{-1}$ & $8.6\times 10^{-2}$& $8.6\times 10^{-2}$& $ 1.4\times 10^{-1}$ \\
\hline
{}{4-2-0.0005 }&$ 7.3\times 10^{-2}$ & $3.3\times 10^{-1}$ &$ 7.3\times 10^{-2}$ &$ 7.3\times 10^{-2}$& $ 6.8\times 10^{-2}$\\
\hline
{}{4-3-0.0005}& $ 7.1\times 10^{-2}$& $3.3\times 10^{-1}$ &$7.1\times 10^{-2}$  &$7.1\times 10^{-2}$ &$7.7\times 10^{-2}$ \\
\hline
{}{4-4-0.0005}&$ 8.5\times 10^{-2}$ & $3.8\times 10^{-1}$& $8.5 \times 10^{-2}$&$8.5 \times 10 ^{-2}$ &$ 9.2\times 10^{-2}$ \\
\hline
    \end{tabular}}
    \caption{ {}{relative $\ell^{\infty}(0,\dots,N_T;\ H^1(\Omega))$ errors (and  $\ell^{\infty}(0,\dots,M_T;\ H^1(\Omega))$ for the coarse ones) with leave-one-out strategy $N=40$ for the parameter $b$} }
    \label{tablebruss2}
\end{table}
}

\item {In order to observe the effect of the number of modes $N$ on the NIRB-GP approximation, we plot its errors for several $N$ and the ones obtained from the FEM fine and coarse approximations in Figure \ref{Bruss1} (sensitivities of $a$) and compare them to the true projection for the worst-case scenario (with $a=3$, $b=4$ and $\alpha=0.01$). In Figure \ref{plots}, we present the sensitivities with the fine HF solver and with the NIRB-GP approach, after 100 time steps.

   \begin{figure}
      \centering
\includegraphics[scale=0.3]{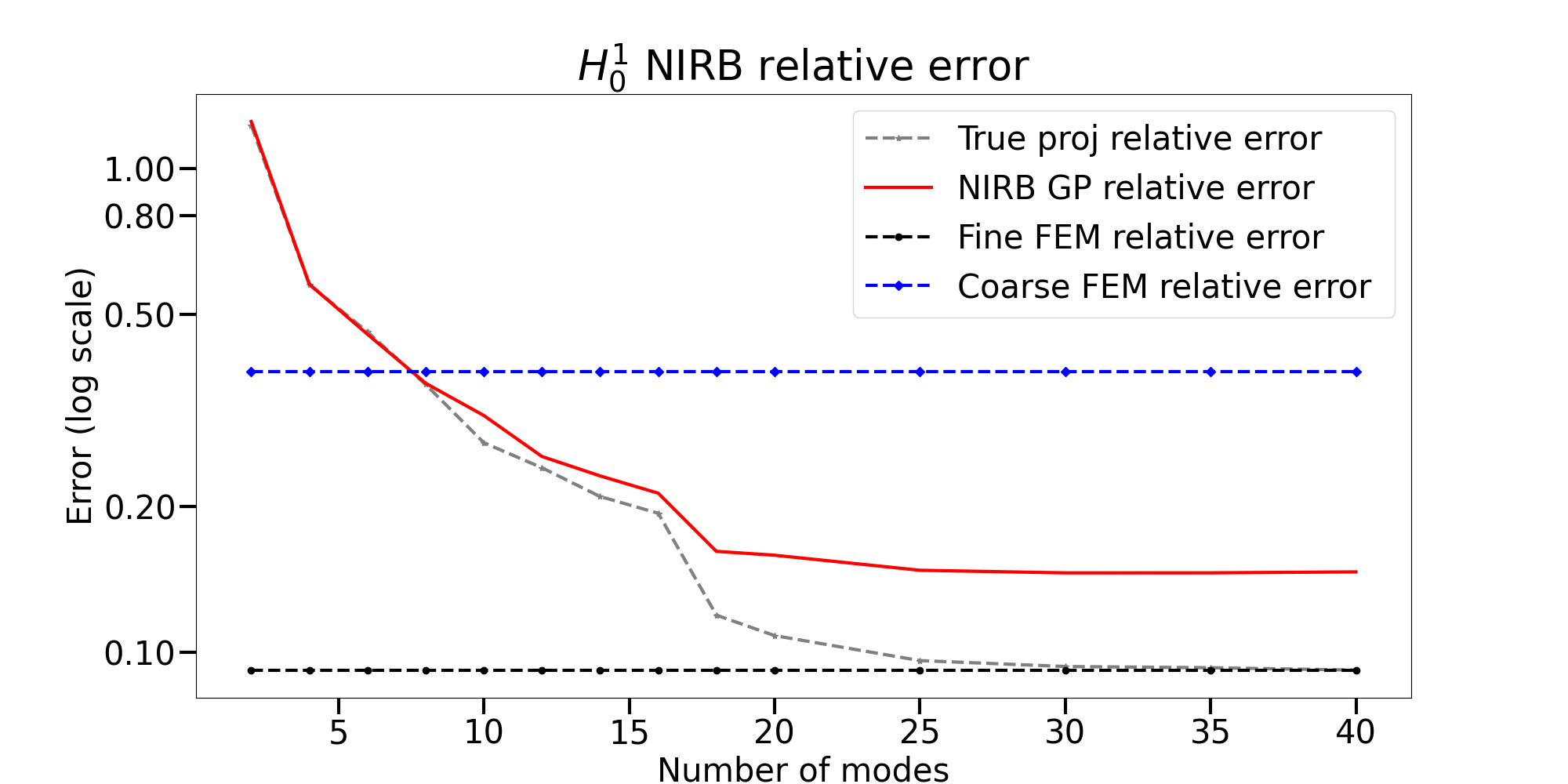}
\caption{{}{Sensitivities $\ell^{\infty}(0,\dots,N_T;\ H^1(\Omega))$  relative errors with respect to time $t=[0,4]$, with the worst-case scenario where the parameter is $(a,b,\alpha)=(3,4,0.01)$: NIRB-GP version compared to the true RB-projection }}
\label{Bruss1}
\end{figure}}

  \begin{figure}[!htbp]
     \centering
     \begin{subfigure}[b]{0.45\textwidth}
         \centering
         \includegraphics[scale=0.2]{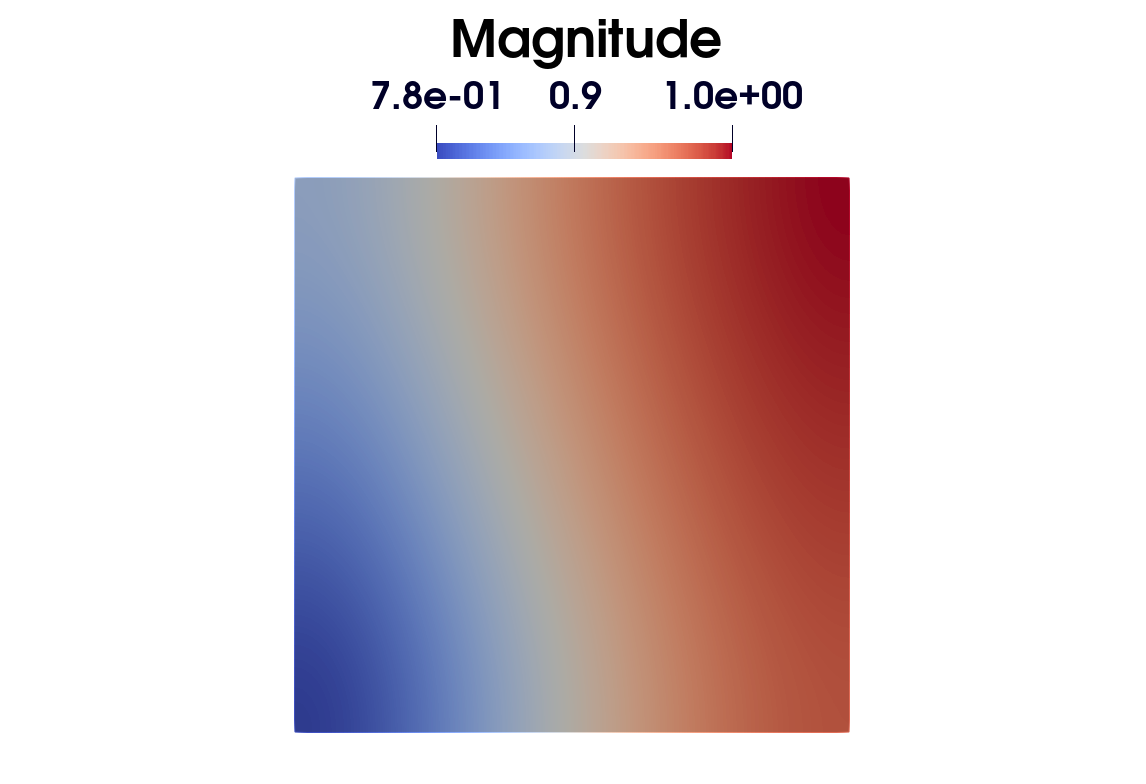}
         \caption{With fine HF solver}
     \end{subfigure}
     \quad \quad  
     \begin{subfigure}[b]{0.45\textwidth}
         \centering
         \includegraphics[scale=0.2]{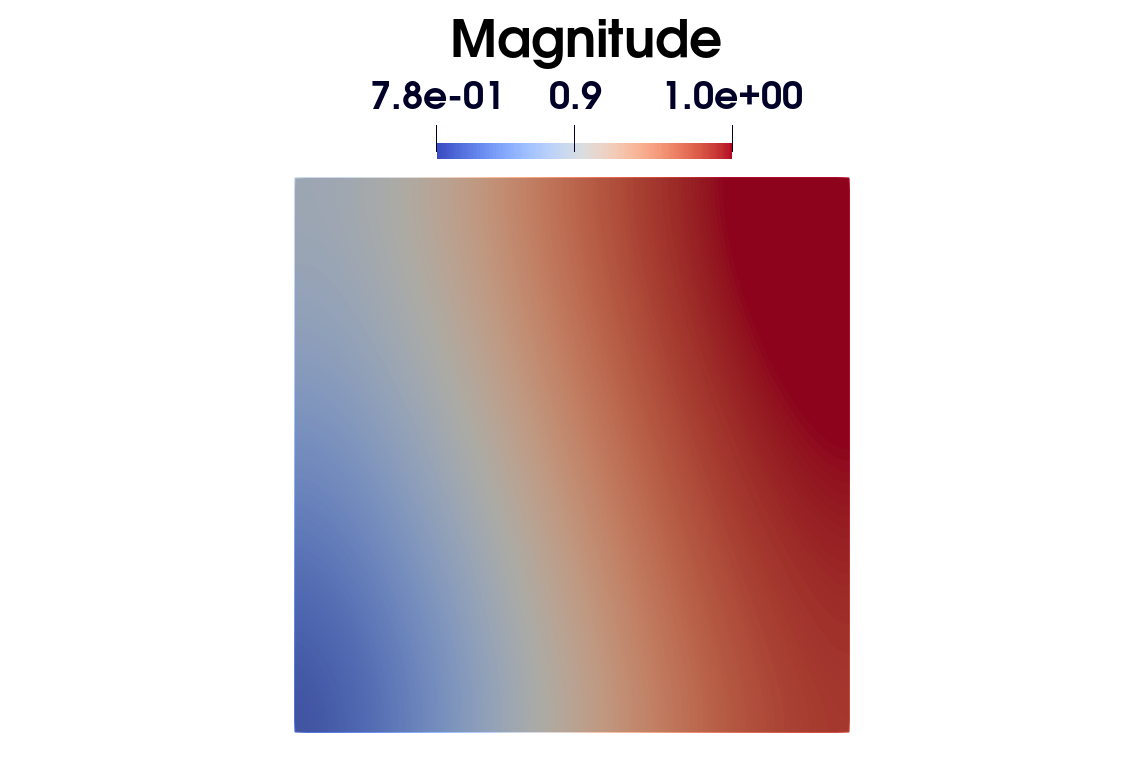}
         \caption{With NIRB-GP approximation}
     \end{subfigure}
     \quad \quad \;
 \caption{{}{Magnitude of the sensitivities, with the worst-case scenario where the parameter is $(a,b,\alpha)=(3,4,0.01)$: Fine HF solution vs NIRB-GP version, after 100 time steps }}
 \label{plots}
   \end{figure}

\item Then, we have tested the NIRB algorithm with the adjoint formulation \ref{NIRBalgosensadjoint}, where we have used as the objective function $\mathcal{F}(\mu)= \frac{1}{2} \overset{N_T}{\underset{n=0}{\sum}} \norm{\mathbf{u}_{h}^n(\mu) - \overline{\mathbf{u}}^n}^2_{L^2(\Omega)}$ as in \eqref{Fobj} (or \eqref{Fobjcontinuous} in its continuous version). We consider noisy {}{measurements} $\overline{\mathbf{u}}^n$. We display in Table \ref{TableParam1} the evaluations of the sensitivity $\frac{\pt \mathcal{F}}{\pt a}(\mu)$ with the fine sensitivities (respectively with $b$ and $\alpha$) and compare them with the NIRB ones given by \eqref{NIRBapproximation0rect0sensF}.
Now, the Lagrangian for the  Brusselator system with $\boldsymbol{\chi}=(\chi_1,\chi_2)$ as our {}{lagrange multiplier} takes the form: \\
\begin{equation}
    \mathcal{L}(\mathbf{u},\boldsymbol{\chi}; \mu)= \mathcal{F}(\mu)+ \int_0^T (\chi_1,F_1(\mathbf{u})- {u_1}_t ) + (\chi_2,F_2(\mathbf{u}) - {u_2}_t) \ ds.
\end{equation}
We obtain the following associated adjoint system (in its continuous version)
 \begin{numcases}{}
 \label{strongformchi}
&$  \chi_{1t}= - \frac{\pt err}{\pt u_1} - 2 u_1 u_2 \chi_1 + (b+1) \chi_1 - \alpha \Delta \chi_1 - b \chi_2 + 2 u_1 u_2 \chi_2, \ \textrm{ in }\Omega\times [0,T[$, \notag \\
&$  \chi_{2t}= - \frac{\pt err}{\pt u_2} - \chi_1 u_1^2 + \chi_2 u_1^2 - \alpha \Delta \chi_2, \ \textrm{ in }\Omega\times [0,T[$, \notag \\
      & $\chi_1(\cdot,T) = 0,\ \textrm{ in }\Omega,$ \notag \\
      & $\chi_2(\cdot,T) = 0,\ \textrm{ in }\Omega,$ \notag \\
 & $\pt_n \chi_1 (\cdot,t) = 0, \ \textrm{ on }\pt \Omega \times [0,T[$, \notag \\
  & $\pt_n \chi_2 (\cdot,t) = 0, \ \textrm{ on }\pt \Omega \times [0,T[$.
\end{numcases}
  \end{itemize}
   

 \begin{table}[tbhp]
     {\footnotesize
       \caption{ Objective sensitivity values with $N=40$ with $h=0.02 \simeq H^2$}\label{TableParam1}
   \begin{center}
    \begin{tabular}{ |c|c|c| }
      \hline
  Parameter & High-Fidelity values & rectified NIRB adjoint version \\
    \hline
  $a $ & $-2.74\times 10^{-6}$ & $5.70 \times 10^{-5}$\\
    \hline
    $b$ & $1.17\times 10^{-5}$ & $1.66 \times 10^{-4}$\\
    \hline 
    $\alpha$ & $ -6.73\times 10^{-4}$ & $-3.8 \times 10^{-2}$ \\
    \hline
    \end{tabular}
   \end{center}
   }
 \end{table}

\subsubsection{ Time execution (min,sec)}
\label{time3}
Finally, the computational costs are reduced well during the online part of these NIRB algorithms as it is highlighted with this example ($h=0.02$ and $H=0.1$). We present the FEM  forward simulation runtimes for one instance of parameter in {}{T}able \ref{runtimes13} and the NIRB runtimes with the adjoint rectified and with the GP versions in Table \ref{runtimes23} (offline and online times).
   \begin{table}[tbhp]
     {\footnotesize
       \caption{ FEM runtimes (h:min:sec)}\label{runtimes13}
   \begin{center}
    \begin{tabular}{ |c| c| }
      \hline
   FEM high fidelity solver & FEM coarse solution \\
    \hline
   00:07:05 & 00:00:58\\
    \hline
    \end{tabular}
   \end{center}
   }
   \end{table}
 \begin{table}[tbhp]
     {\footnotesize
       \caption{NIRB runtimes ($N=40$, h:min:sec)}\label{runtimes23}
   \begin{center}
{    \begin{tabular}{|c|c|c|}
      \hline
       & Offline & Online\\
      \hline
    Rectified NIRB &  01:25:00 & 00:04:02 \\
      \hline
      NIRB-GP &   01:32:00 & 00:03:57\\
      \hline
    \end{tabular}}
   \end{center}
   }
   \end{table}
  
   { \begin{remark}
    On the parameter $b$, we present in Table \ref{lambdaN} the values of $\lambda(N)^{1/2}$ for different number of modes, and we refer to \cite{paraboliccontext} for more details on why we can assume that it remains bounded.
        \begin{table}[tbhp]
     {\footnotesize
       \caption{Values of $\lambda(N)^{1/2}$ for $N=1$ to $N=40$}\label{lambdaN}
   \begin{center}
   { \begin{tabular}{|c|c|c|c|c|c|c|c|c|c|c|}
      \hline
    $ 2.4 \times 10^{-2}$&$1.1 \times 10^1$&$ 6.1 \times 10^1$&$8.7\times 10^1 $&$1.2 \times 10^2$& \dots &  $ 3.4\times 10^2$ & $ 3.4\times 10^2$ & $ 3.4\times 10^2$ & $ 3.4\times 10^2$ & $ 3.4\times 10^2$\\
      \hline
    \end{tabular}}
   \end{center}
   }
   \end{table}
 \end{remark}}
 
   \appendix
   \section{Appendix.}
     \subsection{Proof of Theorem \ref{corollaryPsii}}
   \label{proofEuler}
     \begin{proof}
  As in \cite{thomee2}, we first decompose the error with two components $\theta^n$ and $\rho^n$  such that, on the discretized time grid $(t^n)_{n=0,\dots,N_T}$,
\begin{align}
\forall n=0,\dots,N_T, \ e^n:= \Psi_h^n- \Psi(t^n)&=(\Psi_h^n- P^1_h \Psi(t^n))+(P^1_h \Psi(t^n)- \Psi(t^n)), \nonumber \\
&:=\theta^n + \rho^n.
\label{errorParaboldisc}
\end{align}

\begin{itemize}
\item The estimate on $\rho^n$ (see Lemma 1.1 \cite{thomee2}) leads to
\begin{equation}
  \norm{\rho^n}_{L^2(\Omega)}\lesssim h^2 \norm{\Psi(t^n)}_{H^2(\Omega)} \leq h^2 \big[\norm{\Psi^0}_{H^2(\Omega)} + \int_0^{t^n} \norm{ \Psi_t}_{H^2(\Omega)} \ \textrm{ds} \big], \  \forall n \in \{0,\dots,N_T\}.
\label{rhoclassiquedisc}
\end{equation}
\item In order to derive the estimate of $\theta^n$, from the weak formulations \eqref{eq:2chainrule} and \eqref{eq:directheatsensitivity}, by definition of $P^1_h$ \eqref{P1opParabol} and choosing $v=\theta_h$, we remark that
\begin{align}
  \label{secondexpression}
    (\overline{\pt} \theta^n,\theta^n)+\mu \norm{\nabla \theta^n}_{L^2(\Omega)}^2  &=\frac{1}{\mu}(\overline{\pt} u_h^n-  u_t(t^n), \theta^n) - (w^n, \theta^n), \notag \\
&=\underbrace{\frac{1}{\mu}(\overline{\pt} u_h^n- u_t(t^n), \theta^n)}_{e^u} - (w_1^n+w_2^n, \theta^n), 
\end{align}
where $w_1^n$ and $w_2^n$ are defined by
\begin{equation}
  \label{w1etw2}
  w_1^n:=(P_h^1-I)\overline{\pt} \Psi(t^n), \textrm{\quad   } w_2^n:=\overline{\pt} \Psi(t^n) - \Psi_t(t^n), \quad \textrm{and} \quad w^n:=w_1^n+w_2^n.
  \end{equation}
The weak formulation \eqref{eq:2chainrule} corresponds to the one of the heat equation with $\Psi$ as the unknown and a source term dependant of the solution $u(\mu)$. An estimate similar for $\theta$ is proved in \cite{thomee2} for a source term $f\in L^2(\Omega \times [0,T])$ and $\mu=1$ such that the discretized version of $f$ is the evaluation of the continuous function $f$ on the discretization points. Thus, only the contribution of $e^u$ in \eqref{secondexpression} is new and remains to be estimated.  
  
As a matter of fact, by definition of $\overline{\pt}$ and by Cauchy-Schwarz inequality (and since the second term of the left-hand side of \eqref{secondexpression} is always positive), 
\begin{equation*}
  \label{thirdexpression}
  \norm{\theta^n}_{L^2(\Omega)}^2 \leq \big(\norm{\theta^{n-1}}_{L^2(\Omega)}+\Delta t_F \big[ \frac{1}{\mu} \norm{\overline{\pt} u_h^n -  u_t(t^n)}_{L^2(\Omega)} + \norm{w^n}_{L^2(\Omega)} \big] \big) \norm{\theta^n}_{L^2(\Omega)},
\end{equation*}
and by repeated application, and since  $\norm{\theta^0}_{L^2(\Omega)}=0$, it entails
\begin{equation}
  \label{thirdexpression}
  \norm{\theta^n}_{L^2(\Omega)}\leq  \underbrace{\Delta t_F \underset{j=1}{\overset{n}{\sum}} \frac{1}{\mu} \norm{\overline{\pt} u_h^j - u_t(t^j)}_{L^2(\Omega)}}_{T_{1,n}} + \underbrace{\Delta t_F \underset{j=1}{\overset{n}{\sum}} \norm{w^j}_{L^2(\Omega)}}_{T_{2,n}},
\end{equation}
\begin{itemize}
\item 
To bound $T_{1,n}$, as for the unknown $\Psi$, we can define 
\begin{equation}
\label{errorParabolU}
    \theta_u^n:=(u_h(t)- P^1_h u(t)),
\end{equation}
and decompose $T_{1,n}$ in two contributions:
\begin{equation*}
  \frac{\Delta t_F}{\mu} \underset{j=1}{\overset{n}{\sum}} \norm{\overline{\pt} u_h^j -  u_t(t^j)}_{L^2(\Omega)}  \leq \frac{\Delta t_F}{\mu}  \underset{j=1}{\overset{n}{\sum}} \big(   \norm{\overline{\pt} \theta^j_u}_{L^2(\Omega)}+\norm{w^j_{u}}_{L^2(\Omega)}  \big),
\end{equation*}
where
\begin{equation}
   \label{w1etw2u}
  w_{u}^j:=w_{1,u}^j+w_{2,u}^j \textrm{ with }  w_{1,u}^j:=(P_h^1-I)\overline{\pt} u(t^j), \textrm{\quad and \quad } w_{2,u}^j:=\overline{\pt} u(t^j) - u_t(t^j).
\end{equation}

Then by using the Cauchy-Schwarz inequality, 
\begin{equation}
 \label{decompositiondisc}
 T_{1,n}:= \frac{\Delta t_F}{\mu} \underset{j=1}{\overset{n}{\sum}} \norm{\overline{\pt} u_h^j - u_t(t^j)}_{L^2(\Omega)} \leq \frac{\sqrt{t^n-t^0}}{\mu} \big[\big(\underbrace{ \underset{j=1}{\overset{n}{\sum}} \Delta t_F \norm{\overline{\pt} \theta_u^j }^2_{L^2(\Omega)}}_{T_{\theta}} \big)^{1/2}+\big( \underbrace{\underset{j=1}{\overset{n}{\sum}} \Delta t_F \norm{ w_u^j }^2_{L^2(\Omega)}}_{T_w} \big)^{1/2} \big].
\end{equation}
\begin{itemize}
    \item Let us begin by the estimate on $T_{\theta}$.
On the state solution $u$, by choosing $v=\overline{\pt} \theta^n_u$, from \eqref{varpara2disc} (the operator $\overline{\pt}$ and the spatial derivative commute), we have
\begin{equation}
   \label{eq:ptthetaeq0}
  \norm{\overline{\pt} \theta^n_u}_{L^2(\Omega)}^2 + \mu (\nabla \theta_u^n,\overline{\pt} \nabla \theta^n_u)  = -(w^n_u,\overline{\pt} \theta^n_u),
\end{equation}
where $\theta_u^n$ is given by $\eqref{errorParabolU}$.
By definition of $\overline{\pt}$ (and with Young's inequality), 
  \begin{equation*}
  \norm{\overline{\pt} \theta^n_u}_{L^2(\Omega)}^2 + \frac{\mu}{\Delta t_F} \norm{\nabla \theta^n_u}_{L^2(\Omega)}^2 \leq \frac{\mu}{2 \Delta t_F} \big(\norm{\nabla \theta_u^n}_{L^2(\Omega)}^2 +\norm{ \nabla \theta_u^{n-1}}_{L^2(\Omega)}^2\big) + \frac{1}{2} \big( \norm{w^n_u}_{L^2(\Omega)}^2 + \norm{\overline{\pt} \theta^n}_{L^2(\Omega)}^2\big),
\end{equation*}
which entails
\begin{equation}
  \label{enfaitutile}
  \norm{\overline{\pt} \theta^n_u}_{L^2(\Omega)}^2 \leq \frac{\mu}{\Delta t_F} \norm{\nabla \theta_u^{n-1}}_{L^2(\Omega)}^2 -\frac{\mu}{\Delta t_F} \norm{\nabla \theta_u^n}_{L^2(\Omega)}^2  + \norm{w^n_u}_{L^2(\Omega)}^2,\ \forall n=1,\dots, N_T.
\end{equation}
Summing over the time steps, we get
\begin{equation*}
  \underset{j=1}{\overset{n}{\sum}}  \norm{\overline{\pt} \theta^j_u}_{L^2(\Omega)}^2 \leq \big| \underset{j=1}{\overset{n}{\sum}} \frac{\mu}{\Delta t_F}\big[\norm{\nabla \theta_u^{j-1}}_{L^2(\Omega)}^2 - \norm{\nabla \theta_u^j}_{L^2(\Omega)}^2 \big] + \norm{w^j_u}_{L^2(\Omega)}^2 \big|,
\end{equation*}
and we obtain from the initial condition $\theta_u^0=0$, 
\begin{equation*}
  \underset{j=1}{\overset{n}{\sum}}  \norm{\overline{\pt} \theta^j_u}_{L^2(\Omega)}^2 \leq \big| \frac{\mu}{\Delta t_F} \big( \norm{\nabla \theta_u^0}_{L^2(\Omega)}^2 - \norm{\nabla \theta_u^n}_{L^2(\Omega)}^2 \big) \big| +  \underset{j=1}{\overset{n}{\sum}} \norm{w^j_u}_{L^2(\Omega)}^2 \leq \frac{\mu}{\Delta t_F} \norm{\nabla \theta_u^n}_{L^2(\Omega)}^2 +  \underset{j=1}{\overset{n}{\sum}} \norm{w^j_u}_{L^2(\Omega)}^2.
\end{equation*}

From \eqref{enfaitutile} and by repeated application, we find for the first right-hand side term that
\begin{equation*}
   \norm{\nabla \theta_u^n}_{L^2(\Omega)}^2  \leq \frac{\Delta t_F}{\mu} \underset{j=1}{\overset{n}{\sum}}  \norm{w^j_u}_{L^2(\Omega)}^2,
\end{equation*}
and thus, multiplying by $\Delta t_F$ we find that
\begin{equation}
  \label{utilepourh1}
  T_{\theta}:=\underset{j=1}{\overset{n}{\sum}} \Delta t_F \norm{\overline{\pt} \theta^j_u}_{L^2(\Omega)}^2  \leq 2 \underset{j=1}{\overset{n}{\sum}} \Delta t_F  \norm{w^j_u}_{L^2(\Omega)}^2.
\end{equation}
Now, going back to \eqref{decompositiondisc}, we obtain
\begin{equation}
   \label{newdecompositiondisc}
  T_{1,n} \lesssim \frac{1}{\mu} \big( \underbrace{ \underset{j=1}{\overset{n}{\sum}}  \Delta t_F \norm{w^j_{u}}_{L^2(\Omega)}^2  }_{T_w}\big)^{1/2}  \lesssim \frac{1}{\mu} \big(  \underset{j=1}{\overset{n}{\sum}}  \Delta t_F \big[\norm{w^j_{1,u}}_{L^2(\Omega)}^2 + \norm{w^j_{2,u}}_{L^2(\Omega)}^2\big] \big)^{1/2}.
\end{equation}
\item It remains to estimate $T_w$.

\begin{itemize}
\item Let us first consider the construction for $w_{1,u}$
  \begin{equation*}
w_{1,u}^j=(P_h^1-I)\overline{\pt}u(t^j) =\frac{1}{\Delta t_F}(P_h^1-I)\int_{t^{j-1}}^{t^j}u_t \ \textrm{ ds }=\frac{1}{\Delta t_F}\int_{t^{j-1}}^{t^j} (P_h^1-I) u_t \ \textrm{ ds}, 
\end{equation*}
since $P_h^1$ and the time integral commute.
  Thus, from the Cauchy-Schwarz inequality,
  \begin{align}
    \label{w1L2}
  \Delta t_F \sum_{j=1}^n \norm{ w^j_{1,u}}_{L^2(\Omega)}^2 &\leq \Delta t_F \sum_{j=1}^n \int_{\Omega} \big[\frac{1}{\Delta t_F^2}\ \int_{t^{j-1}}^{t^j} ((P_h^1-I) u_t)^2 \ \textrm{ ds} \big] \ \Delta t_F  \nonumber\\
  &\leq \sum_{j=1}^n \int_{t^{j-1}}^{t^j} \norm{(P_h^1-I) u_t}^2_{L^2(\Omega)} \ \textrm{ ds}, \nonumber \\
&\lesssim Ch^4 \sum_{j=1}^n \int_{t^{j-1}}^{t^j} \norm{u_t}_{H^2(\Omega)}^2 \leq  h^4 \int_0^{t^n} \norm{u_t}_{H^2(\Omega)}^2 \ \textrm{ ds}. 
\end{align}
\item To estimate the $L^2$ norm of $w_{2,u}$, we write
\begin{equation*}
w_{2,u}^j=\frac{1}{\Delta t_F} (u(t^j)-u(t^{j-1}))-u_t(t^j)=-\frac{1}{\Delta t_F}\int_{t^{j-1}}^{t^j} (s-t^{j-1})u_{tt}(s) \ \textrm{ ds},
\end{equation*}
such that we end up with 
\begin{equation}
  \label{w2L2}
  \Delta t_F \sum_{j=1}^n \norm{ w_{2,u}^j}_{L^2(\Omega)}^2 \leq \sum_{j=1}^n\norm{ \int_{t^{j-1}}^{t^j} (s-t^{j-1})  u_{tt}(s) \ \mathrm{ds}}^2_{L^2(\Omega)} \leq  \Delta t_F^2 \int_0^{t^n} \norm{  u_{tt}}_{L^2(\Omega)}^2 \ \mathrm{ds}.
\end{equation}
\end{itemize}
\end{itemize}
\item Then, the bound of $T_{2,n}$, which appears in \eqref{thirdexpression}, is classical and can be found in \cite{thomee2}.
We have 
\begin{equation}
  \label{combinesend}
 T_{2,n} \lesssim h^2  \int_0^{t^n} \norm{\Psi_t}_{H^2(\Omega)} \textrm{ ds } + \Delta t_F \int_0^{t^n} \norm{\Psi_{tt}}_{L^2(\Omega)}\textrm{ ds },
  \end{equation}
  \end{itemize}
and the proof ends by using \eqref{rhoclassiquedisc}, \eqref{thirdexpression}, \eqref{newdecompositiondisc}, \eqref{w1L2}, \eqref{w2L2}, and \eqref{combinesend}.
\end{itemize}
\end{proof}


\section*{Acknowledgment}
This work is supported by the SPP2311 program. We would like to give special thanks to Ole Burghardt for his precious help on Automatic Differentiation.
\bibliography{SensitivityNIRB2}
\bibliographystyle{plain}
\end{document}